\numberwithin{equation}{section} % Equations numbered according to sections
\theoremstyle{plain}
\newtheorem{theorem}{Theorem}[section]
\newtheorem{lemma}[theorem]{Lemma}
\newtheorem{proposition}[theorem]{Proposition}
\theoremstyle{definition}
\newtheorem{definition}[theorem]{Definition}
\newtheorem{example}[theorem]{Example}
\newtheorem{remark}[theorem]{Remark}%[section]
\begin{document}
%%%%%%%%%%%%%%%%%%%%%%%%%%%%%%%%%%%%%%%%%%%%%%%%%%%%%%%%%%%%%%%%%%%%%%%%%%%%%%%%%%%%%%%%%%%%%%%%%%%

\title{\bf{ 
%Eigenvalue  statistics for deformed GinUEs I: Critical edge
Critical edge statistics for deformed GinUEs%Deformed complex Ginibre ensembles 
}}

\author{
Dang-Zheng Liu\footnotemark[1] ~ and     Lu Zhang\footnotemark[2]}
\renewcommand{\thefootnote}{\fnsymbol{footnote}}
\footnotetext[1]{%CAS Key Laboratory of Wu Wen-Tsun Mathematics, 
School of Mathematical Sciences, University of Science and Technology of China, Hefei 230026, P.R.~China. E-mail: dzliu@ustc.edu.cn}
\footnotetext[2]{School of Mathematical Sciences, University of Science and Technology of China, Hefei 230026, P.R.~China. E-mail: zl123456@mail.ustc.edu.cn}

%\footnotetext[3]{ Keywords:  Deformed ensembles, Ginibre matrices, edge statistics}

%\Keywords{keyword1, Keyword2, Keyword3, Keyword4} Keywords Random matrices · Spiked models · Extreme eigenvalue statistics ·
%Gaussian fluctuations · Ginibre matrices
%Mathematics Subject Classification 15A52 · 60F05  Mathematics Subject Classification (2000) 60B20

 \maketitle
%--------------------------------------------------------------------------------------------------
 \begin{abstract}
 For the complex Ginibre ensemble subjected to an additive perturbation by a deterministic normal matrix $X_0$, we establish that under specific spectral conditions on 
$X_0$, only two distinct types of local spectral statistics emerge at the spectral edge: GinUE statistics and critical statistics, which respectively correspond to regular   and quadratically vanishing spectral points. The critical statistics, as a non-Hermitian analogue of Pearcey statistics in random matrix theory,  describes a novel point process on the complex plane.
This identifies the third (and likely final) universal statistics  in non-Hermitian random matrix theory, after  the established GinUE bulk and edge universality classes,  and represents the primary  achievement of this paper.

 \end{abstract}

\tableofcontents

\section{Introduction and main results}

%%%%%%%%%%%%%%%
\subsection{Introduction}

We  begin our study by specifying the deformed complex Ginibre ensembles (GinUE for short)
 \begin{equation} \label{defmat}
 X:=X_0 +\sqrt{\frac{\tau}{N}}G,  
 \end{equation}
where $X_0$   is a deterministic  matrix, and    $G=[g_{ij}]_{i,j=1}^N$ is a random  matrix  whose entries   are i.i.d.  complex normal   variables with mean 0 and variance 1. 
 Or, to be more exact,  we have 
 \begin{definition}  \label{GinU} 
A random  complex  
$N\times N$ matrix  $X$,    is said to belong to the deformed complex  Ginibre ensemble with mean $X_0$ and  time $\tau>0$, denoted by GinUE$_{N}(X_0)$,  if  the joint probability density function for  matrix entries  is given by   
\begin{equation}\label{model}
P_{N}(X_0;X)=    \Big(\frac{N}{\pi\tau}\Big)^{N^2}\ 
e^{ -\frac{N}{\tau} {\rm Tr} (X-X_0)(X-X_0)^*}.
\end{equation}

\end{definition}

One  motivation behind the study of the deformed model    \eqref{defmat}   comes mostly
from the general effort toward the understanding of the effect of a perturbation %with small rank 
on the spectrum of a large-dimensional random matrix.   Another is  that  
 eigenvalues  of non-Hermitian matrices may  present    instability, unlike eigenvalues of Hermitian matrices; see \cite{BC16} and \cite{LZ}  for  a more detailed explanation.
Although   the deformed GUEs  or the general deformed Hermitian models   have  been  
quite well understood (see e.g. \cite{CP16} and references therein), not much is known about   the  local  eigenvalue statistics  in the non-Hermitian situation.     When  the perturbation strength  is above a certain threshold (supercritical regime),    extreme eigenvalues will stay away from the bulk   and outlier phenomena  have been   well  studied \cite{BR,BC16,Ta13}. 
Meanwhile, when  the perturbation strength  is near or below   a certain threshold (subcritical and critical  regimes), in one  previous paper \cite{LZ}  we investigate  the  finite-rank  perturbation effect and prove that 
 the edge statistics   is characterized  by a new class of determinantal point processes, for 
which  correlation kernels  depend  only on geometric multiplicity of eigenvalue and  can be expressed 
in terms of the repeated erfc integrals.   Together,  all  these  results  establish     a non-Hermitian analogue of the BBP phase  transition for the largest eigenvalue of spiked random matrices \cite{BBP}.  %,  named after     the famous work of  Baik, Ben~Arous and P{\'e}ch{\'e} .  
Our goal is to   identify all the possible local  eigenvalue statistics at the edge and in the bulk  of the spectrum in a   series of papers.  Related to those   is  a  key  result   from  \cite[Proposition 1.3]{LZ}, which  expresses correlation functions as matrix integrals  in term of auto-correlations  of  characteristic polynomials. This  indeed  
plays a central role in our studying eigenvalue statistics   of  the deformed GinUE.

 Historically, % In the non-perturbative  case,  that is,  $X_0=0$ in \eqref{defmat},  
 the study of  non-Hermitian random matrices  was first initiated  by  Ginibre  \cite{Gi}  for those  matrices with i.i.d. real/complex standard  Gaussian entries, and then was extended to  i.i.d.  entries.   At a macroscopic  level,     the limiting spectral   measure was     the famous \textit{circular law}, which  is   a uniform measure on the unit disk,   after  a long list of works including Bai \cite{Ba},  Girko \cite{Gir},   G\"{o}tze and Tikhomirov \cite{GT}, Pan and Zhou  \cite{PZ},  and Tao and Vu \cite{TV10}.    See a recent survey \cite{BC12} and references therein for more details. 
  However, at  a   microscopic level,     local  eigenvalue statistics  in the bulk and at the  soft edge  of the spectrum were  revealed    first  for the   Ginibre ensembles   \cite{BS, FH,Ka},  with the help of  exact eigenvalue  correlations.     These Ginibre statistics  are
  conjectured to hold true even  for    i.i.d.   random matrices,   although  the proof  seems much more difficult  than  the Hermitian analogy.  Later  in \cite{TV15},  Tao and Vu  established  a four moment match theorem  in the bulk and at the  soft edge, % as a non-Hermitian version of  their previous work   \cite{TV11},  
both in the real and complex cases.  
    Cipolloni,  Erd\H{o}s and  Schr\H{o}der  recently  remove the four moment
matching condition from \cite{TV15} and prove   the edge universality for any   random matrix with  i.i.d.  entries  under the assumption of higher moments \cite{CES}.   Very recently,   Maltsev and Osman  \cite{MO23} prove   the bulk universality   in the    i.i.d. case.
%Bulk Universality for Complex non-Hermitian Matrices with Independent and Identically Distributed Entries
%But their  method   doesn't  seem to work in the bulk. 
% For more details about local universality  of  non-Hermitian random matrices,  see \cite{CES,TV15} and references  therein. 
   See    \cite{BF23a,BF23b}   for a comprehensive review   on non-Hermitian random matrices.% given by Byun and  Forrester    

 For the deformed model  \eqref{defmat},    the  circular law still holds  under a  small low rank perturbation from $X_0$, see   \cite[Corollary 1.12]{TV10} as the culmination of work by many authors and a survey \cite{BC12} for further details. 
 Particularly   when  $X_0$ has bounded rank and bounded operator norm,  such a perturbation can  create outliers  outside the unit disk  \cite{Ta13}.  Since then,  this outlier phenomenon has been  extensively  studied  in \cite{BC16,BZ,BR,COW,OR}.  Furthermore,  the fluctuations of  outlier eigenvalues are  investigated respectively by 
 Benaych-Georges and Rochet   for   deformed invariantly matrix ensembles \cite{BR},  and   %in the sense of the single ring theorem,
 by Bordenave and Capitaine for deformed i.i.d. random matrices \cite{BC16}.   These fluctuations,  due to non-Hermitian structure,  become much more complicated, and indeed  highly depend on the shape of the Jordan canonical form of the perturbation.     It is worth emphasizing that 
  when  $G$  takes a  matrix-valued Brownian motion, \eqref{defmat}  becomes a dynamical random matrix  and remains something of a mystery; see  \cite{BD,BGN14, BGN15}.

 Our main goal is to   identify all the possible local  edge  statistics  
 for  the $n$-point correlation functions, which is defined as 
\begin{equation}\label{Correlation functionDef}
R_N^{(n)}(X_0;z_1,\cdots,z_n):=\frac{N!}{(N-n)!}
\int\cdots\int P_N(X_0;z_1,\cdots,z_N)
{\rm d}z_{n+1}\cdots{\rm d}z_N,
\end{equation}
with   $P_N(X_0;z_1,\cdots,z_N)$ being  the joint eigenvalue   density function  of  the deformed  model $X$ given   in Definition \ref{GinU},   under certain restriction    on  the initial matrix $X_0$. 
Closely related to  this is the limiting   spectral measure of the deformed model. Its characterization can be given by  using tools from free probability,    the Brown measure of free circular Brownian motion  $x_0 + \sqrt{\tau}c$, where  $c$  is   a circular variable,   freely independent of 
a general  operator  $x_0$.    The Brown measure and corresponding density formula has been extensively studied in   \cite{BYZ, BCC13,  EJ,  HZ23, Zh21}.  The exact density  will  play a crucial   role  in the study of local eigenvalue statistics for non-Hermitian random matrices.   Recently,  \cite{EJ}  Erd\H{o}s and Ji  further  observe  a remarkable phenomenon:  \\   
{\textit{ “The density of the Brown measure has one of the following two types of behavior
around each point on the boundary of its support - either (i) sharp cut,   or (ii) quadratic decay at certain critical points on the boundary’’}}. \\This dichotomy can also be observed in the deformed model  \eqref{defmat} under certain restriction  on  the initial matrix $X_0$.

  {\bf Notation.} For arbitrary   sequences
  of complex numbers   $x_N$ and of real positive numbers $b_N$, the notation $x_N=O(b_N)$   means that   $|x_N|/ b_N \leq C$ for some   positive constant $C$, while   $x_N=o(b_N)$   means that $ x_N/b_N \to 0$ as $ N \to  +\infty$.

 %https://arxiv.org/pdf/1202.0644.pdf 
%Theorem 1.4 has Brown measure formula for the case g is a normal operator which is your case
%my paper 
%Brown measure of the sum of an elliptic operator and a free random variable in a finite von Neumann algebra
% has Brown measure valide for general case x+c,  where x could be non-normal. Besides mentioning the support, maybe you could also mention existence of Brown measure density formula.
%my other paper
%The Brown measure of a sum of two free random variables, one of which is triangular elliptic
%Serban Belinschi, Zhi Yin, Ping Zhong
%Section 7 has additional properties of x+c: the density of the Brown measure of x+c has an optimal upper bound 1/pi; and the Brown measure is absolutely continuous
%Erdos-Ji's work was heavily built on my papers 2 3. They played with the Brown measure formula and studied the edge behavior. 

%%%%%%%%%%%%

\subsection{Main results}

Throughout the present paper, we just    specify  $X_0$  as a diagonal   form, although we  can select it  as a normal matrix  due to the invariance 
    of the GinUE matrix,  
\begin{equation}\label{A0 form}
X_0:={\rm diag}\left(a_1\mathbb{I}_{r_1},\cdots,a_t\mathbb{I}_{r_t},
z_0\mathbb{I}_{r_0},A_{t+1}
\right), 
\end{equation}
where $t$ is a fixed  non-negative integer, $z_0$ is a spectral parameter, 
and $z_0, a_1,\cdots, a_t$ are distinct complex numbers, all of which  are independent of $N$. $A_{t+1}$ is a normal $r_{t+1}\times r_{t+1}$ matrix independent of $N$ such that it does not include $z_0$ and $\{ a_{\alpha}|\alpha=1,\cdots,t \}$ as eigenvalues. Here, $r_0$ and $r_{t+1}$ are fixed non-negative integers which are independent of $N$. We also assume that for $\alpha=1,\cdots,t,$
 \begin{equation}\label{ralpha N}
r_{\alpha}=c_{\alpha}N+R_{\alpha,N},
\quad R_{\alpha,N}=O(1),
\end{equation}
where $c_{\alpha}$ is positive and independent of $N$, while $R_{\alpha,N}$ may depend on $N$. 
Obviously,  we have 
\begin{equation}
\sum_{\alpha=1}^{t} c_{\alpha} N+
\sum_{\alpha=1}^{t} R_{\alpha,N}=N-(r_0+r_{t+1}),
\end{equation}
and %As $\sum_{\alpha=1}^{t} R_{\alpha,N}+r_0+r_{t+1}=O(1)$, we must have
\begin{equation}\label{calpha sum}
\sum_{\alpha=1}^t  c_{\alpha}=1, \quad 
\sum_{\alpha=1}^{t} R_{\alpha,N}=-(r_0+r_{t+1}).
\end{equation}

For any given complex numbers $a_1,\cdots,a_t$, let's  introduce  a  probability measure  on the complex plane, which is just the limiting spectral measure of $X_0$, defined   by
\begin{equation}\label{nu}
d\nu(z) :=    \sum_{\alpha=1}^t c_{\alpha} \delta(z-a_{\alpha}).
\end{equation}
Set  
\begin{equation}\label{parameter}
P_{00}(z_0):= \int  \frac{1}{|z-z_0|^2} d\nu(z), \quad P_0\equiv P_{0}(z_0):=  
  \int  \frac{z-z_0}{|z-z_0|^4} d\nu(z),
 \end{equation}
%then for   the limiting spectral measure of the complex random matrix $X$ defined in \eqref{defmat} denoted by $\mu_{\infty}$, 
% the support of $\mu_{\infty}$ is
%\begin{equation}\label{supp mu} 
%\mathrm{Supp}(\mu_{\infty})=\Big\{z_0\in \mathbb{C}: P_{00}(z_0)\geq 1 \Big\},
%\end{equation}
%with  the boundary curve  characterized by the equation $P_{00}(z_0)=1$,
%see e.g.  \cite[Proposition 1.2]{BC16}.
and \begin{equation}\label{parameter2}
 % P_0\equiv P_{0}(z_0):=  \int  \frac{z-z_0}{|z-z_0|^4} d\nu(z)
 P_1\equiv P_{1}(z_0):=  
  \int  \frac{1}{|z-z_0|^4} d\nu(z), \quad 
   P_2\equiv P_{1}(z_0):=  
  \int  \frac{(z-z_0)^2}{|z-z_0|^6} d\nu(z),\quad \chi=\frac{P_{2}(z_0)}{P_{1}(z_0)}.
\end{equation}
These    quantities  are  crucial  to characterize   exact functional form  of the  limiting  spectral measure for the deformed matrix $X$ in Definition  \ref{defmat} and  to distinguish spectral   properties.  For instance,   assuming $\tau=1$ for convenience,   the support of  the limiting spectral measure  $\mu_{\infty}$ for $X$ 
\begin{equation} \mathrm{Supp}(\mu_{\infty}):=\Big\{z_0\in \mathbb{C}: P_{00}(z_0)\geq 1 \Big\};\end{equation}
see e.g.  \cite[Proposition 1.2]{BC16}. %\cite{BC16}, 
We will  verify the following  conclusions  in  the present and subsequent papers.   
\begin{itemize}
\item [(i)] {\bf Bulk  universality}   holds at every point in  the bulk regime  
\begin{equation}  \Big\{z_0\in \mathbb{C}: P_{00}(z_0)> 1\Big\}. \end{equation}
%as   that  in GinUE.
\item [(ii)]  {\bf There are only two types of  edge statistics}  on the  spectral  boundary:    
%\begin{equation} \{z_0\in \mathbb{C}: P_{00}(z_0)= 1 \}, \end{equation}
{\bf GinUE    edge statistics}  at the regular  edge  point $z_0$ such that  $P_{00}(z_0)= 1$ and $P_{0}(z_0) \neq  0$, and   
  {\bf critical  edge statistics} at the critical edge point  $z_0$ such that  $P_{00}(z_0)= 1$ and  $P_{0}(z_0) =  0$.  The latter is a new  kind of point process, but  seems not determinantal; see   \cite{EJ}  for   the same classification of spectral edges   for  
the density of Brown measure of free circular Brownian motion.

\end{itemize}    It seems most interesting  to investigate  local statistics   
at the  critical  edge,  so we focus on it    in the present paper,  and leave     GinUE    edge statistics     and bulk universality in the subsequent papers.

To state  our main results we need to introduce a matrix integral.  
 For an $n\times n$ strictly upper triangular matrix $T$, an $n\times n$ complex matrix $Y$ and an $n\times r_0$ complex matrix $W$,  define a function 
 \begin{multline}\label{FTGY}
F(W,T,Y)=-\hat{\tau}{\rm Tr}(WW^*)-{\rm Tr}\Big(
K_2(T,W)
\Big(
\frac{1}{2}K_2(T,W)+K_1(T)
\Big)
\Big)
+{\rm Tr}\big( K_3(T) \big)
\\
-\frac{1}{2}{\rm Tr}(YY^*)^2
+
 \overline{\chi}{\rm Tr}\big( Y^*Y\hat{Z}^2 \big)
 +{\rm Tr}\big( Y\hat{Z}Y^*\hat{Z}^* \big)
 +\chi{\rm Tr}\Big( YY^*\big(\hat{Z}^*\big)^2 \Big)
 +\hat{\tau}{\rm Tr}\big( YY^* \big),
\end{multline}
where   $\hat{\tau}\in \mathbb{R}$,   $\chi\in \mathbb{C}$  and 
\begin{equation}\label{K1T}
\begin{aligned}
K_1(T)&=\overline{\chi} T^2  + \chi (T^*)^2+T^*T +\big( \hat{Z}+\chi \hat{Z}^* \big)^*T +T^* \big( \hat{Z}+\chi \hat{Z}^* \big) 
    \\
&+
\frac{1}{2}T\big( \hat{Z}+\chi \hat{Z}^* \big)^*+\frac{1}{2}\big( \hat{Z}+\chi \hat{Z}^* \big)T^*
+\chi(\hat{Z}^*)^2+2\hat{Z}\hat{Z}^*+\overline{\chi}\hat{Z}^2,
%&K_1(T)=\overline{\chi}\Big( \hat{Z}T+\frac{1}{2}T\hat{Z}+T^2 \Big)
%+\chi\Big( T^*\hat{Z}^*+\frac{1}{2}\hat{Z}^*T^*+(T^*)^2 \Big)
%+T^*T     \\
%&+T^*\hat{Z}+\hat{Z}^*T
%+\frac{1}{2}\big( T\hat{Z}^*+\hat{Z}T^* \big)
%+\chi(\hat{Z}^*)^2+2\hat{Z}\hat{Z}^*+\overline{\chi}\hat{Z}^2,
%\chi(\hat{Z}^*)^2+2\hat{Z}\hat{Z}^*+\overline{\chi}\hat{Z}^2
\end{aligned}
\end{equation}
\begin{equation}\label{K2T}
K_2(T,W)=TT^*+WW^*+\frac{1}{2}
\big( 
\hat{Z}+\chi\hat{Z}^*
 \big)T^*+
 \frac{1}{2}
T\big( 
\hat{Z}+\overline{\chi}\hat{Z}^*
 \big)^*,
\end{equation}
and
\begin{equation}\label{K3T}
\begin{aligned}
&K_3(T)=-\frac{1}{4}T
\big( \chi(\hat{Z}^*)^2+2\hat{Z}\hat{Z}^*+\overline{\chi}\hat{Z}^2 \big)
T^* \\
&+(1-|\chi|^2)\Big( \hat{Z}T+\frac{1}{2}T\hat{Z}+T^2 \Big)
\Big( \hat{Z}T+\frac{1}{2}T\hat{Z}+T^2 \Big)^*
-\hat{\tau}TT^*.
\end{aligned}
\end{equation}
%At the same time, we introduce an Hermitian matrix variable G and the matrix variable Y without any symmetry, and define the matrix function 
%with real parameter $\hat{\tau}$ and complex parameter $\chi$ with module less or equal than 1:
%
For $ \hat{Z}={\rm diag}\big(
\hat{z}_1,\cdots,\hat{z}_n
\big), $ we further define a matrix integral 
\begin{multline}\label{I0critical2}
I_n\big( \hat{Z} \big)
=\frac{1}{(\sqrt{2\pi})^n}\frac{ 1}{\pi^{n(n+r_0)+\frac{n(n+1)}{2}}}
\int \int \int
\left(
\det\begin{bmatrix}
\hat{Z} & -Y^*  \\ Y  &\hat{Z}^*
\end{bmatrix}
\right)^{r_0-n}
\\
\times
\det\!\big( h(W,T,Y) \big)\,\exp\{ F(W,T,Y) \}
{\rm d}W {\rm d}T{\rm d}Y,
\end{multline}
where 
\begin{equation}\label{h12}
h(W,T,Y)=\begin{bmatrix}
(Y^*Y)\otimes \mathbb{I}_n+F_{1,1} &
F_{1,2}+\big(
\hat{Z}Y^*+\chi Y^*\hat{Z}^*
\big)\otimes \mathbb{I}_n  \\
-F_{1,2}^*-\big(
\hat{Z}^*Y+\overline{\chi} Y\hat{Z}
\big)\otimes \mathbb{I}_n
& (YY^*)\otimes \mathbb{I}_n+F_{1,1}^*
\end{bmatrix},
\end{equation}
with
\begin{equation}\label{F112}
F_{1,1}=\mathbb{I}_n\otimes (WW^*)
-\overline{\chi}\hat{Z}^2 \otimes \mathbb{I}_n-
\hat{Z}\otimes \big(
\overline{\chi}\hat{Z}+\hat{Z}^*+
T^*+\overline{\chi}T
\big)
\end{equation}
and
\begin{equation}\label{F122}
F_{1,2}=Y^*\otimes \Big(
\chi\big(
\hat{Z}^*+T^*
\big)+\big(
\hat{Z}+T
\big)
\Big).
\end{equation}

Now, we are ready to  formulate the  main result about      limits of  correlation functions near the critical edge point.
\begin{theorem}\label{2-complex-correlation critical2}
Let $R_{N}^{(n)}\left( X_0; z_1,\cdots,z_{n} \right)$ be  the $n$-point correlation function  for the  deformed  ensemble $ {\mathrm{GinUE}}_{N}(X_0)$  with $\tau=1+N^{-\frac{1}{2}}\sqrt{P_1}\hat{\tau}$.  Under  the assumption
\eqref{A0 form} on $X_0$,  let $\chi$ be given in \eqref{parameter2},   if  $z_0$ is  a  critical boundary point, that is,  $P_{00}(z_0)=1$ and 
$P_{0}(z_0)=0$, then 
as $N\to \infty$ we have,    %the following     hold 
uniformly for   all 
$\hat{z}_{1}, \ldots, \hat{z}_{n} $
in a  compact subset of $\mathbb{C}$,
\begin{multline} \label{corre2edgecomplex critical03}
 \big( NP_1 \big)^{-\frac{n}{2}}
R_N^{(n)}\Big(
X_0;z_0+\big( NP_1 \big)^{-\frac{1}{4}}\hat{z}_1
,\cdots,
z_0+\big( NP_1 \big)^{-\frac{1}{4}}\hat{z}_n
\Big)
 \\ =\prod_{1\leq i<j\leq n}\big| \hat{z}_i-\hat{z}_j \big|^2\,
e^{-\frac{1}{2}{\rm Tr}
\big(
\overline{\chi}\hat{Z}^2+2\hat{Z}\hat{Z}^*+\chi(\hat{Z}^*)^2
\big)^2+\overline{\chi}{\rm Tr}(\hat{Z}^3\hat{Z}^*)
+\frac{3}{2}{\rm Tr}\big(( \hat{Z}\hat{Z}^*)^2\big) 
+\chi{\rm Tr}\big( \hat{Z}(\hat{Z}^*)^3 \big)
}
\\
\times
e^{-\frac{1}{2}n\hat{\tau}^2
-\hat{\tau}{\rm Tr}\big(
\chi (\hat{Z}^*)^2+\hat{Z}\hat{Z}^*
+\overline{\chi}\hat{Z}^2
\big)}
I_n\big(\hat{Z} \big)+O\big(
N^{-\frac{1}{4}}
\big).
\end{multline}

\end{theorem}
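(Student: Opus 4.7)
The plan is to start from the matrix-integral representation of $R_N^{(n)}$ provided by \cite[Proposition 1.3]{LZ} and perform a large-$N$ saddle-point analysis tailored to the degenerate saddle produced by $P_0(z_0)=0$ at a critical edge. Morally this is the non-Hermitian counterpart of the passage from Airy to Pearcey asymptotics at a quartic critical point, with exponent $1/4$ replacing $1/3$, and with the residual second-order anisotropy encoded by $\chi=P_2/P_1$.

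First, using \eqref{A0 form}--\eqref{ralpha N} I rewrite the auto-correlation of characteristic polynomials of $X_0$ appearing in \cite[Proposition 1.3]{LZ} as $\exp\{N\int\log|\cdot|^2\,d\nu+\text{subleading}\}$, so that, combined with the Gaussian weight, the integrand takes the form $\exp\{N\mathcal{S}(s)+\mathcal{S}_1(s)+\cdots\}$. The critical equation of $\mathcal{S}$ is exactly $P_{00}(s)=1$, and the next-order derivative at the edge is governed by $P_0(s)$. At a generic edge point $P_0(z_0)\neq 0$ produces Gaussian fluctuations on scale $N^{-1/2}$ (GinUE edge); at a critical point the Gaussian term vanishes and the leading non-trivial behavior is quartic with coefficient $P_1(z_0)$, which is precisely what forces the scalings $z_j=z_0+(NP_1)^{-1/4}\hat{z}_j$ and $\tau-1\sim N^{-1/2}\sqrt{P_1}\hat{\tau}$ of the statement. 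The remaining second-order anisotropy not killed by $P_0=0$ is captured by $\chi=P_2/P_1$.

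Next I implement a Schur-type change of variables on the integration matrix: peel off the $n$-dimensional block carrying the shifted spectral points $\hat{Z}$, write the diagonal deviation as $\hat{Z}+T$ with $T$ strictly upper triangular, isolate the $n\times r_0$ block $W$ attached to the $z_0$-sector of $X_0$, and introduce $Y$ as the rescaled $n\times n$ transverse direction. The Jacobian of this decomposition, combined with the determinantal prefactor of \cite[Proposition 1.3]{LZ} and integration over redundant Gaussian directions, produces the $(r_0-n)$-th power of the $2n\times 2n$ block determinant together with $\det h(W,T,Y)$ from \eqref{h12}--\eqref{F122}. Taylor-expanding the action through fourth order in $(W,T,Y)$ and keeping the terms that survive the rescaling, the surviving contributions assemble exactly into $K_1(T)$, $K_2(T,W)$, $K_3(T)$ of \eqref{K1T}--\eqref{K3T} together with the Pearcey-type quartic $-\tfrac{1}{2}{\rm Tr}(YY^*)^2$ and its bilinear $\chi,\overline{\chi}$ twists of \eqref{FTGY}. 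The Vandermonde $\prod_{i<j}|\hat{z}_i-\hat{z}_j|^2$ emerges from the diagonal part of the Schur block, the explicit scalar exponential in \eqref{corre2edgecomplex critical03} is just the constant-in-$(W,T,Y)$ piece of the fourth-order expansion re-centered for convenience, and standard stationary-phase estimates yield the uniform $O(N^{-1/4})$ remainder on compact subsets of $\hat{z}_1,\dots,\hat{z}_n$.

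The principal obstacle is the Schur-reduction step: because the saddle is degenerate one must carry out a consistent fourth-order expansion along the soft direction while remaining Gaussian in the transverse ones, and the bookkeeping required to verify that all the mixed $\chi,\overline{\chi}$ cross-terms fold up exactly into $K_1,K_2,K_3$ and the prescribed $Y$-bilinears of \eqref{FTGY}, with the full Jacobian producing \eqref{h12}, is the substantive computational burden of the proof. A secondary technical point is tail control outside the critical neighborhood, which is absorbed by the strong decay of the original Gaussian weight and the fact that the macroscopic action has a strict maximum at the edge saddle.
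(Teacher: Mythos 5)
Your overall strategy (the integral representation of \cite[Proposition 1.3]{LZ} plus a Laplace analysis at the degenerate saddle, with the $N^{-1/4}$ scaling forced by $P_0(z_0)=0$ and the quartic coefficient $P_1$) is indeed the route taken in the paper, and your identification of the roles of $P_{00}$, $P_0$, $P_1$ and $\chi=P_2/P_1$ is correct. But there is a genuine gap in how you treat the inner expectation. The quantity appearing in \eqref{algebraequa} is not an auto-correlation of characteristic polynomials of the deterministic $X_0$; it is ${\mathbb{E}}_{\mathrm{GinUE}_{N-n}(\widetilde{X}_0)}\big[\prod_i|\det(\sqrt{\gamma_N}z_i-\widetilde{X})|^2\big]$ with $\widetilde{X}_0$ depending on the integration variable $Q$, and replacing it by $\exp\{N\int\log|\cdot|^2\,d\nu+\text{subleading}\}$ destroys exactly the microscopic information that produces the limit law. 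The paper converts this expectation by the duality formula \eqref{EGinUE dual} (from \cite{Gr}, \cite{LZ240}); it is this duality that introduces the $n\times n$ variable $Y$ and the $2n$-dimensional determinant structure, which after QR reduction of the $Q_\alpha$-blocks becomes $\det\big(\widehat{L}_1+\sqrt{\gamma_N}\widehat{L}_2\big)$ in Proposition \ref{foranlysis}, and whose Taylor expansion is what yields $\det h(W,T,Y)$ and the $(r_0-n)$-th power of the $2n\times2n$ block determinant in \eqref{I0critical2}. Your proposal attributes both $Y$ and the determinant prefactor to the Jacobian of a ``Schur-type'' change of variables; that attribution is incorrect, and without the duality step (or an equivalent exact identity) your scheme has no mechanism to produce the polynomial prefactor $\det h(W,T,Y)$ of \eqref{h12} — a Jacobian of a coordinate change cannot depend on the external parameters $\hat{Z}$, $\chi$ in the way $h$ does. (Minor related point: the Vandermonde $\prod_{i<j}|\hat z_i-\hat z_j|^2$ is already present in \eqref{algebraequa} and merely rescales; it does not emerge from your Schur block.)

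The second gap is the localization. You dismiss tail control as ``absorbed by the strong decay of the original Gaussian weight,'' but after the reduction the variables $\boldsymbol{T},\boldsymbol{Q}$ range over the compact constrained domain \eqref{integrationregionprop}, where there is no Gaussian decay at all; what is needed is that the leading phase $\widetilde{f}$ attains a strict global maximum on that constrained set exactly at $T_\alpha=\sqrt{c_\alpha/f_\alpha}\,\mathbb{I}_n$, $Y=Q_0=Q_{t+1}=0$. In the paper this is the content of Lemmas \ref{maximumY} and \ref{maximum lemma}, the latter proved by induction on $n$ with Schur-complement manipulations of the constraint, and it is precisely what makes the restriction to the $\delta$-neighborhood in Proposition \ref{RNn delta} legitimate with an exponentially small error. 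Your proposal asserts the strict maximum without any argument, and since the constraint couples all the $T_\alpha$, $Q_0$, $Q_{t+1}$ blocks, this is not a routine verification; as written, the step would not go through. Beyond these two points, the remaining bookkeeping you describe (anisotropic rescalings of the soft and transverse directions and the assembly of $K_1,K_2,K_3$ and $F$ in \eqref{FTGY}) is consistent with the paper's Section \ref{sect3i}, but it rests on the two missing inputs above.
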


         Several comments  on the above theorem %and future questions 
         can be  listed    as follows.   
         
%         \begin{remark}
%If we turn to consider the finite-rank perturbation effect of the critical point $z_0$, that is,    $r_0>0$ in  \eqref{A0 form}, then 
%the same  result holds as in \eqref{corre2edgecomplex critical02} after similar calculations as in the case $z_0$.  So  the essential phenomenon  near  the critical  spectral edge   is  a point process whose correlation functions are   characterized by  the family of multivariate functions   on the RHS of  \eqref{corre2edgecomplex critical03}.
%\end{remark}

\begin{example}
As an interesting  example, for $a\geq 0$ set  \begin{equation} X_0=\mathrm{diag}\big(\overbrace{a, \ldots,a}^{N/2}, \overbrace{-a, \ldots,-a}^{N/2}\big) \end{equation}
 with   an
equal number of  $\pm a$,     then $t=2, c_1=c_2=1/2, a_1=-a_2=a$  in \eqref{nu},   so solve  the two equations
     \begin{equation} \frac{1}{2|z_0-a|^2} +\frac{ 1}{2|z_0+a|^2} =1,   \quad 
\frac{ a-z_0}{2|z_0-a|^4} +\frac{ -a-z_0}{2|z_0+a|^4} =0,
%\quad P_1=\sum_{\alpha=1}^t \frac{c_{\alpha}}{|z_0-a_{\alpha}|^2} \geq 1,
\end{equation}
  and we see that    there is a critical point $z_0=0$  only   when 
   $a=1$. This  is  a simplest example in which  the critical edge point  appears. 
   Actually,   as $a$ goes from zero  to infinity,  it    illustrates an evolution of the eigenvalue density %initiated from  one   
   to two   disconnected 
  domains which  separate   at some critical value.   
  This example was previously investigated in \cite{BGN15}, where a novel universality pattern emerged in the vicinity of the spectral separation for the averaged product of two characteristic polynomials. Notably, this universality class shares the same scaling laws as those established in Theorem \ref{2-complex-correlation critical2}. To our knowledge, this  constitutes the first detectable signature of a new universal statistics in non-Hermitian random matrices. However, the analysis  there focused exclusively on the averaged product of two characteristic polynomials, leaving more physically relevant correlation functions including even the one-point level density unexplored. Crucially, our methodology extends beyond these limitations: it enables the systematic study of asymptotic behaviors for averaged products of arbitrarily many characteristic polynomials, as demonstrated in the broader framework of Theorem  \ref{2-complex-correlation critical2}.
  %  This example has been investigated  in  \cite{BGN15} and  
%  a novel  universality pattern    arises  in the vicinity of the  separation for  averaged product of two characteristic polynomials, with the same scalings as  in the above theorem.  As far as we know,  this  visible  signal  of a new universal statistics  in non-Hermitian random matrices   was first observed  in  \cite{BGN15}. However, there  only the quantity  of  averaged product of two characteristic polynomials  is well studied,  more interesting correlation functions,    even for  one-point level density, are not investigated. Actually, our method  can be used to study asymptotics for the  averaged product of any finite   characteristic polynomials in more examples as in  Theorem 
% \ref{2-complex-correlation critical2}.    % , in which vanishing  at  a quadratic rate.  
\end{example}

  \begin{figure}\label{RealComplexGrapph2}
\subfigure[$z_0=0$, $X_{0}=\mathrm{diag}\big(\mathbb{I}_{N/2}, -\mathbb{I}_{N/2}\big)$, $P_1=1$]{
\label{ComplexPerturb4Sim}
\includegraphics[width=8.1cm,height=5.1cm]{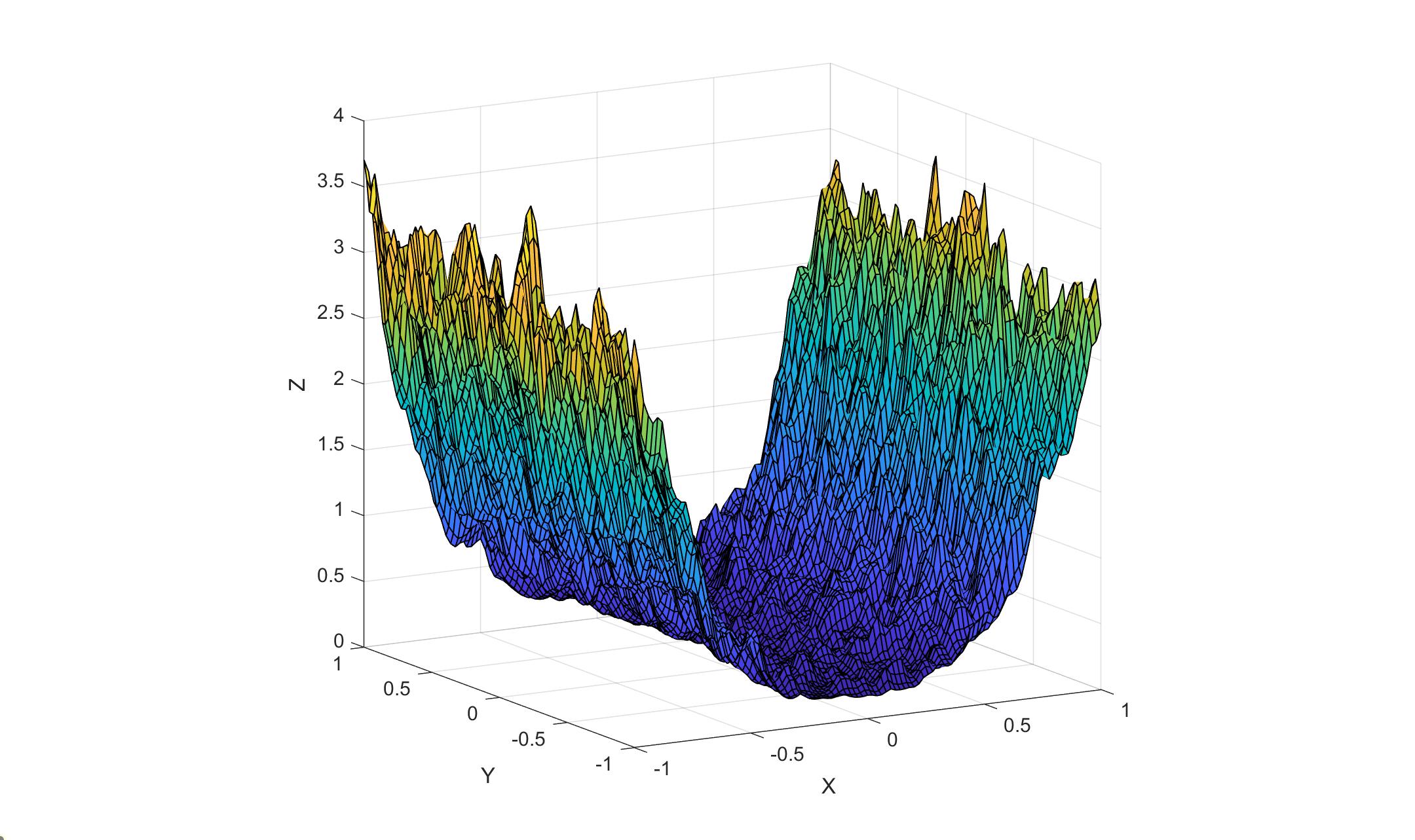}
}\ \ 
\subfigure[$\hat{\tau}=0$, $\chi=1$, $\hat{z}=x+{\rm i}y$]{
\label{ComplexNonPerturb4}
\includegraphics[width=8.1cm,height=5.1cm]{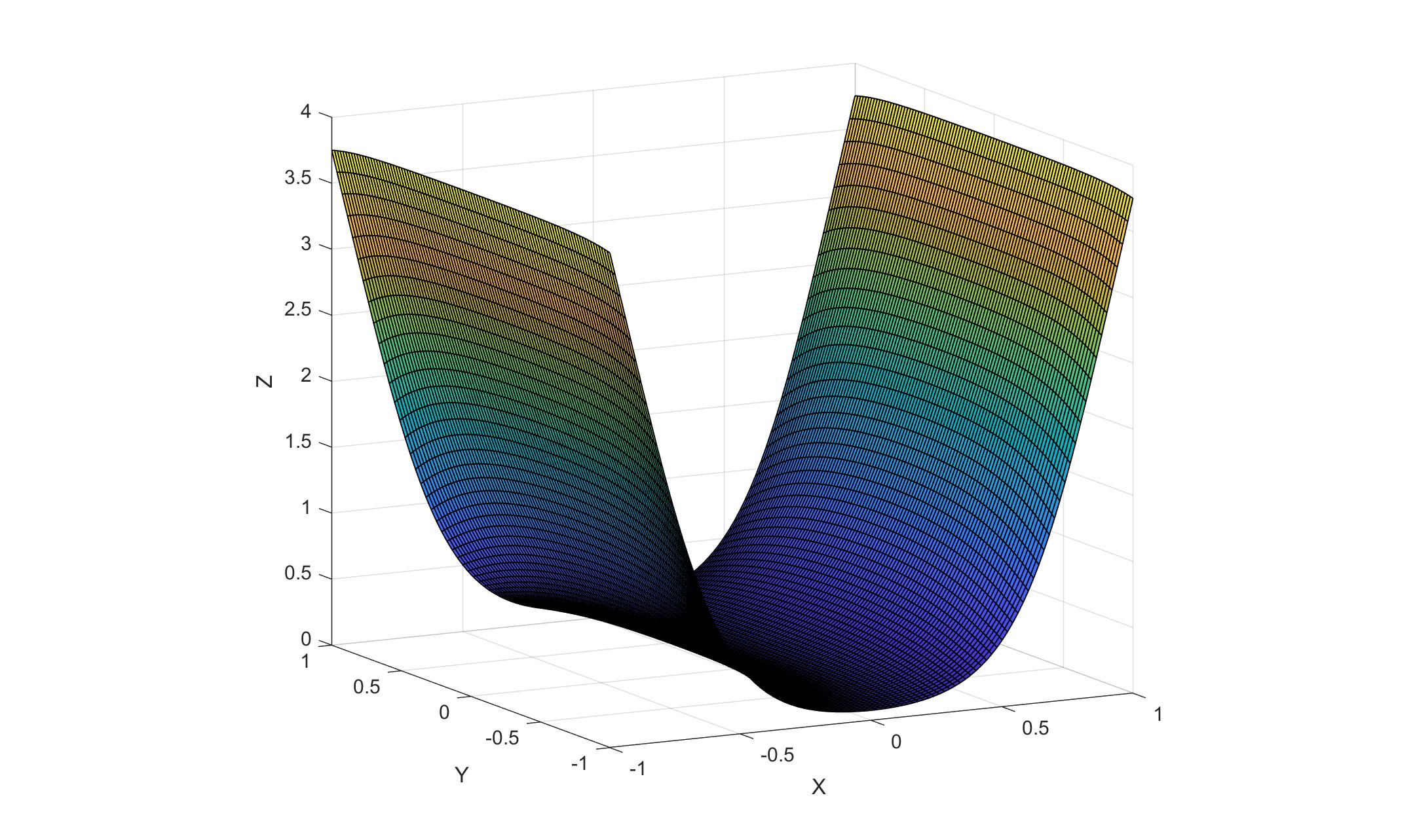}
}

\caption{Plot (a) shows the simulation surface graph of the microscopic rescaled level density $N^{-\frac{1}{4}}R_N^{(1)}\big(
X_0;z_0+N^{-\frac{1}{4}}\hat{z}
\big)$ near the point $z_0=0$ with 5000 samples and $N=3000$, and with $X_{0}=\mathrm{diag}\big(\mathbb{I}_{N/2}, -\mathbb{I}_{N/2}\big)$. Plot (b) shows the surface graph of the  limiting one-point function 
%$e^{-\frac{1}{2}(\hat{z}+\overline{\hat{z}})^4+\hat{z}^2 |\hat{z}|^2+\frac{3}{2}|\hat{z}|^4+\overline{\hat{z}}^2|\hat{z}|^2}I_1(\hat{z}) $ in \eqref{I0critical2}
  with $r_0=0$ and $\hat{z}=x+{\rm i}y$,  as a limit of Plot (a).}
\end{figure}
\begin{remark}
In the special case of $n=1$, the limiting one-point function reduces to    

\begin{equation}
e^{-\frac{1}{2}(\bar{\chi}\hat{z}^2+2|\hat{z}|^2+  \chi \overline{\hat{z}}^2)^2+|\hat{z}|^2 (\chi\bar{\hat{z}}^2 +\frac{3}{2}|\hat{z}|^2+\bar{\chi} \hat{z}^2)} e^{-\frac{1}{2}\hat{\tau}^2-\hat{\tau}(\chi\bar{\hat{z}}^2 +|\hat{z}|^2+\bar{\chi} \hat{z}^2)} I_1(\hat{z}),
 \end{equation}
 where
 \begin{multline}
 I_1(\hat{z})=\frac{1}{\sqrt{2\pi}\pi^{r_0+2}}
 \int \int
 \Big(
 \big||y|^2+WW^*-2\overline{\chi}\hat{z}^2-|\hat{z}|^2\big|^2
 +4|y|^2|\chi\overline{\hat{z}}+\hat{z}|^2
 \Big)
 \\
 \times
 \big(
 |\hat{z}|^2+|y|^2
 \big)^{r_0-1}
 \exp\!\Big\{ 
 -\frac{1}{2}(WW^*)^2-  (\hat{\tau}+\chi\bar{\hat{z}}^2 +2|\hat{z}|^2+\bar{\chi} \hat{z}^2)WW^*
  \Big\}\\
  \times \exp\!\Big\{ 
 -\frac{1}{2}|y|^4+(\hat{\tau}+\chi\bar{\hat{z}}^2 +|\hat{z}|^2+\bar{\chi} \hat{z}^2)|y|^2
  \Big\}
{\rm d}W {\rm d}y,
%e^{-\frac{1}{2}(\bar{\chi}\hat{z}^2+2|\hat{z}|^2+  \chi \overline{\hat{z}}^2)^2+|\hat{z}|^2 (\chi\bar{\hat{z}}^2 +\frac{3}{2}|\hat{z}|^2+\bar{\chi} \hat{z}^2)} e^{-\frac{1}{2}\hat{\tau}^2-\hat{\tau}(\chi\bar{\hat{z}}^2 +|\hat{z}|^2+\bar{\chi} \hat{z}^2)},
 \end{multline}
 with complex number $y$ and $1\times r_0$ complex matrix $W$.  See Figure 1 for the finite-size level density  and its limit.
\end{remark}

\begin{remark}

In principle we  should   also   deal with certain     non-normal matrix perturbation, for instance,  \begin{equation}X_0=\mathrm{diag}\big(\overbrace{J_{2}(a), \ldots,J_{2}(a)}^{N/4}, \overbrace{J_{2}(-a), \ldots,J_{2}(-a)}^{N/4}\big),  \end{equation}
 where   the $2\times 2$ Jordan block     
 \begin{equation} J_{2}(a)=  \begin{bmatrix}
a & 1  \\
0 & a
\end{bmatrix}.\end{equation}  In this case one of the main difficulties  is to verify the  corresponding maximum lemmas in   Section   \ref{Sectmax}. 
Besides, we  can probably  investigate a more general spectral  measure $\nu$ associated with a normal matrix $X_0$ in \eqref{A0 form}.
 
\end{remark}

\begin{remark}
In the special case of $r_0=0$, the matrix variable $W=0,$ and the matrix $h(0,T,Y)$ in  \eqref{h12} can be rewritten as a product of two matrices 
\begin{equation*}
h(0,T,Y)=\begin{bmatrix}
\hat{Z}\otimes \mathbb{I}_n & -Y^*\otimes \mathbb{I}_n
  \\ Y\otimes \mathbb{I}_n  &\hat{Z}^*\otimes \mathbb{I}_n
\end{bmatrix}
\begin{bmatrix}
K_{1,1}
& Y^*\otimes \mathbb{I}_n \\
-Y\otimes \mathbb{I}_n &
K_{1,1}^*
\end{bmatrix}
\end{equation*}
where $K_{1,1}=-\mathbb{I}_n\otimes \big(
\overline{\chi}(\hat{Z}+T)+\hat{Z}^*+T^*
\big)-\overline{\chi}\hat{Z}\otimes \mathbb{I}_n.$
Therefore, if $r_0=0$, in \eqref{I0critical2} we have
\begin{equation*}
\left(
\det\begin{bmatrix}
\hat{Z} & -Y^*  \\ Y  &\hat{Z}^*
\end{bmatrix}
\right)^{-n}
\det\!\big( h(0,T,Y) \big)
=\det\begin{bmatrix}
K_{1,1}
& Y^*\otimes \mathbb{I}_n \\
-Y\otimes \mathbb{I}_n &
K_{1,1}^*
\end{bmatrix}.
\end{equation*}
The integral $I_n(\hat{Z})$ in \eqref{I0critical2} and \eqref{corre2edgecomplex critical03} reduces to
\begin{equation}\label{I0critical20}
I_n\big( \hat{Z} \big)
=\frac{ 1}{(\sqrt{2\pi})^n\pi^{n^2+\frac{n(n+1)}{2}}}
\int \int
\det\begin{bmatrix}
K_{1,1}
& Y^*\otimes \mathbb{I}_n \\
-Y\otimes \mathbb{I}_n &
K_{1,1}^*
\end{bmatrix}\,\exp\{ F(0,T,Y) \}
{\rm d}T{\rm d}Y,
\end{equation}
where $F(W,T,Y)$ is defined in \eqref{FTGY}.

\end{remark}
%      \begin{figure}[h]
%%\vspace{1cm}
%\begin{center}
%%\begin{minipage}[c]{0.5\textwidth}
%%\centering\includegraphics[width=3.5cm,height=3.5cm]{a0.7.jpg}%ŸÍÔÚÇ°ÃæÀšºÅÖÐÐŽÍŒÆ¬Ãû
%%%\caption{$\theta=1.2,1,2,1$}
%%\renewcommand{\figurename}{ass}
%%\label{}
%%\end{minipage}%
%%\begin{minipage}[c]{0.5\textwidth}
%%\centering\includegraphics[width=3.5cm,height=3.5cm]{a1.2.jpg}%ŸÍÔÚÇ°ÃæÀšºÅÖÐÐŽÍŒÆ¬Ãû
%%%\caption{$\theta=1,1,1$}
%%\renewcommand{\figurename}{ass}
%%\end{minipage}
% \begin{minipage}[c]{0.5\textwidth}
%\centering\includegraphics[width=3.5cm,height=3.cm]{a1.jpg}%ŸÍÔÚÇ°ÃæÀšºÅÖÐÐŽÍŒÆ¬Ãû
%%\caption{critical}
%\renewcommand{\figurename}{ass}
%\end{minipage}
%\end{center}
%\end{figure}

 \subsection{Proof sketch and article structure% of the proof
 } \label{sec1.3}
%\qquad
Combining     integral representations  for the  $n$-point    correlation functions,   established  in our previous paper \cite[Proposition 1.3]{LZ}, with    duality formulas for     auto-correlation functions of characteristic polynomials in \cite{Gr, LZ240},  we  reformulate  the matrix integral on the right-hand side (RHS for short)  of \eqref{algebraequa}  below  
into a tractable analytical framework. This restructured form is now amenable to asymptotic analysis through Laplace's method, as detailed in  Proposition   \ref{foranlysis} in Section  \ref{sect2} where   the function $f(\boldsymbol{T},Y,\boldsymbol{Q})$
 – though intricate in structure – plays a pivotal role.

Following the verification of two key maximum principles for   $f(\boldsymbol{T},Y,\boldsymbol{Q})$ in Section   \ref{Sectmax}, we reduce the integral’s dominant contribution to a localized region (Proposition \ref{RNn delta}). This localization, governed by the constrained integration domain in Eq.  \eqref{integrationregionprop}, necessitates a precise Taylor expansion of $f(\boldsymbol{T},Y,\boldsymbol{Q})$, whose technical implementation constitutes the core challenge addressed in Section  \ref{sect3.1}.

 The crucial final step involves analyzing the reduced matrix integral  $I_{N,\delta}$ in  Proposition \ref{RNn delta}.
 To unravel the complexity of highly non-trivial structure for $f(\boldsymbol{T},Y,\boldsymbol{Q})$, we devise a series of changes of matrix variables. These recast the original integration domain  \eqref{integrationregionprop}  into a simplified parametrization with minimal degrees of freedom; see 
  \eqref{integral region critical non0},   \eqref{S critical non0} and \eqref{R critical non0} in  Section  \ref{sect3i}. Central to this reformulation are the  
   triangular matrices  $\{T_{\alpha}\}$, equivalently,  
    diagonal matrices  $\{T_{{\rm d},\alpha}\}$ and   strictly upper triangular  matrices  $\{T_{{\rm u},\alpha}\}$ as in \eqref{Talpha 2}. 
    Through the sequential  change of variables   
\begin{equation} 
T_{{\rm d},1},  T_{{\rm d},2}, \ldots,  T_{{\rm d},t} \longmapsto  \widetilde{T}_{{\rm d},1}, \widetilde{T}_{{\rm d},2},\ldots, \widetilde{T}_{{\rm d},t}   \longmapsto  \Gamma_1, \widetilde{T}_{{\rm d},2},\ldots, \widetilde{T}_{{\rm d},t} 
%T_{{\rm u},\alpha}=\frac{1}{\sqrt{f_{\alpha}}}\hat{T}_{{\rm u},\alpha},    \quad \alpha=1,\cdots,t.
\end{equation}
and 
\begin{equation}
T_{{\rm u},1}, T_{{\rm u},2}, \ldots,  T_{{\rm u},t}   \longmapsto \hat{T}_{{\rm u},1},    \hat{T}_{{\rm u},2}, \ldots, \hat{T}_{{\rm u},t} \longmapsto G_1, G_2, \ldots, G_t  \longmapsto G_1, \hat{G}_2, \ldots, G_t,
\end{equation}
we correspondingly  do very fine-tuning  in $f(\boldsymbol{T},Y,\boldsymbol{Q})$, like  

\begin{equation}
\begin{aligned}
(G_1, Y) &\mapsto N^{-\frac{1}{4}}(G_1, Y), \quad
(\widehat{G}_2, \Gamma_1) \mapsto N^{-1}(\widehat{G}_2, \Gamma_1), \quad Q_{t+1} \mapsto N^{-\frac{1}{2}}Q_{t+1}\\
%Q_{t+1} &\mapsto N^{-\frac{1}{2}}Q_{t+1}, \\
\widetilde{T}_{d,\alpha} &\mapsto N^{-\frac{1}{2}}\widetilde{T}_{d,\alpha} \,(\alpha = 2, \ldots, t), 
G_\beta \mapsto N^{-\frac{1}{2}}G_\beta \,(\beta = 3, \ldots, t).
\end{aligned}
\end{equation}

%This will  be a very long journey  of calculation 
%and  may    indicate one of the reasons   why   critical statistics at the spectral edge is  so complicated. 

This elaborate analytical process not only underscores the profound technical challenges inherent in the edge-critical statistics,  but also illuminates a fundamental source of their complexity. Our systematic resolution unfolds through three rigorously structured   subsections: 
 In {\bf Section \ref{sect3.1}},   we do a Taylor expansion of the function $f(\boldsymbol{T},Y,\boldsymbol{Q})$ in  \eqref{fTY}   in seven  steps,  with a lot of changes of variables, notations    and calculations.    
  In {\bf Section \ref{sect3.2}} we do Taylor expansion of a determinant $\det\big(\widehat{L}_1+\sqrt{\gamma_N}\widehat{L}_2\big)$. 
  In {\bf Section \ref{sect3.3}}, combine the Taylor expansions  in   {\bf Section \ref{sect3.1}}  and  {\bf Section \ref{sect3.2}},  we   give a  complete proof.

\section{Concentration reduction} \label{sect2}
%%%%%%%%%%%%%%%%%%%%%%%%%%%%%%%%%%%%%%%%%%%%%%%%%%%%%%%%%%%%%%%%%%%%%%%%%55

%%%%%%%%%%%%%%%%%%%%%%%%%%%%%%%%%%%%%%%%%%%%%%%%%%%%%%%%%%%%%%%%%%%%%%%%%55
\subsection{Notation} \label{sectnotation}

%\quad\ \;
Throughout the proof of Theorem \ref{2-complex-correlation critical2}, the sizes of all matrices and matrix variables will be chosen to be finite, independent of $N$.

  Denote  the conjugate, transpose,  and  conjugate transpose of a complex matrix $A$ by  $\overline{A}$,  $A^t$ and $A^*$  respectively.   Besides, the following symbols will be used in the subsequent sections.  
Denote the tensor product of an $m\times n$  matrix  $A:=[a_{i,j}]$  and  a $p\times q$ matrix  $B$ by a block matrix 
$$
A \otimes B=\left[\begin{smallmatrix}
a_{11}B   &\cdots  &  a_{1n} B   \\ \vdots  & \ddots &   \vdots   \\
a_{m1}B   &\cdots  &  a_{mn} B   
\end{smallmatrix}\right],$$
and denote the Hilbert-Schmidt norm of a complex matrix $M$ by $\|M\|:=\sqrt{{\rm{Tr}}(MM^*)}$. The notation $M=O(A_N)$ for some number sequence $A_N$  means that each matrix element of $M$ has the same order  of $ A_N$. In other words, there exists a positive constant $C$ independent of $N$, such that $|M_{i,j}|\leq C A_N$.

%%%%%%%%%%%%%%%%%%%%%%%%%%%%%%%%%%%%%%%%%%%%%%%%%%%%%%%%%%%%%%%%%%%%%%%%%%%%%5
\subsection{Translation reduction} \label{Translation reduction}
\quad\ \;First, recall the assumptions \eqref{defmat} and \eqref{A0 form}. By taking a translation transformation of the random matrix $X$, that is, $
X\mapsto X-a_1\mathbb{I}_N,$ the eigenvalues $\lambda_1,\cdots,\lambda_N$ of $X$ change to $\lambda_1-a_1,\cdots,\lambda_N-a_1$. Combining the definition of the $n$-point correlation function $R_N^{(n)}$ in \eqref{Correlation functionDef}, we  have
\begin{equation}\label{CorrelationTranslationReduction}
R_N^{(n)}(X_0;z_1,\cdots,z_n)=R_N^{(n)}(X_0-a_1\mathbb{I}_N;z_1-a_1,\cdots,z_n-a_1).
\end{equation}

Moreover,  %as $N$ tends to infinity, 
if $z_0$ is an edge point of the limiting spectral measure of the random matrix $X$, then $z_0-a_1$ is an edge point of the limiting spectral measure of the random matrix $X-a_1\mathbb{I}_N$. Therefore, without loss of generality, in \eqref{A0 form} we can assume that 
\begin{equation}\label{CorrelationTranslationReduction0}
a_1=0.
\end{equation}
We will see that   this assumption  ensures  a key  integral representation of the $n$-correlation function; cf. Proposition \ref{intrep} below.  

\subsection{Matrix variables} \label{sectMatrixvariables}
\quad\ \;To prove Theorem \ref{2-complex-correlation critical2}, we need a matrix integral representation for the correlation function. To this end, recalling \eqref{A0 form}, we need to introduce several matrix variables $Y$, $Q_0$, $Q_{t+1}$ and the upper triangular matrix variable $T_{\alpha}$ with non-negative diagonal elements for $\alpha=1,\cdots,t$, which take values in $\mathbb{C}^{n\times n}$, $\mathbb{C}^{n\times r_0}$, $\mathbb{C}^{n\times r_{t+1}}$ and $\mathbb{C}^{n\times n}$ respectively.

Also, for the matrix variables $Y$, $Q_0$ and $Q_{t+1}$, denote
\begin{equation}
Y:=[y_{i,j}]_{i,j=1}^n,\quad
Q_0:=\big[q^{(0)}_{a,b}\big], \quad Q_{t+1}:=\big[q^{(t+1)}_{c,d}\big],
\end{equation}
where $ a,c=1,\cdots,n, b=1,\cdots,r_0,   d=1,\cdots,r_{t+1}$.
 On the other hand, rewrite $T_{\alpha}$ as a sum of  a diagonal matrix $\sqrt{T_{{\rm d},\alpha}}$ and a strictly upper triangular  matrix ${T_{{\rm u},\alpha}}$:
\begin{equation}\label{Talpha 2}
T_{\alpha}=\sqrt{T_{{\rm d},\alpha}}+T_{{\rm u},\alpha},
\end{equation} 
where
\begin{equation}\label{Tdalpha}
T_{{\rm d},\alpha}:={\rm diag}
\big(
t_{1,1}^{(\alpha)},\cdots,t_{n,n}^{(\alpha)}
\big)\geq 0\ \ 
\text{and}
\ \ T_{{\rm u},\alpha}:=\left[\begin{smallmatrix}
0 & t_{1,2}^{(\alpha)} & \cdots & t_{1,n}^{(\alpha)} \\
& 0 & \cdots & t_{2,n}^{(\alpha)}  \\
&& \ddots & \vdots \\
&&& 0 
\end{smallmatrix}\right].
\end{equation}

Further, denote the relevant volume elements by
\begin{multline}\label{VolumeElement1}
{\rm d}Y=\prod_{i,j=1}^n {\rm d}^2 y_{i,j},
{\rm d}Q_0=\prod_{a=1}^n\prod_{b=1}^{r_0} {\rm d}^2 q^{(0)}_{a,b},
{\rm d}Q_{t+1}=\prod_{c=1}^n\prod_{d=1}^{r_{t+1}} {\rm d}^2 q^{(t+1)}_{c,d}, {\rm d}T_{\alpha}=\prod_{i=1}^n {\rm d}t_{i,i}^{(\alpha)}
\prod_{i<j}^n {\rm d}^2 t_{i,j}^{(\alpha)},
%\\{\rm d}T_{\alpha}=\prod_{i=1}^n {\rm d}t_{i,i}^{(\alpha)}
%\prod_{i<j}^n {\rm d}^2 t_{i,j}^{(\alpha)},\quad
%\alpha=1,\cdots,t,
\end{multline}
where  $\alpha=1,\cdots,t$, and  for a complex variable $y:=\Re y+\sqrt{-1}\Im y$ the volume element 
${\rm d}^2 y:={\rm d}\Re y{\rm d}\Im y$ is the Lebesgue measure on the complex plane.

Later, we need to use the singular value decomposition for the matrix variable $Y$: 
\begin{equation}\label{singularcorre}
Y=U\sqrt{\Lambda}V,\quad
\Lambda={\rm diag}\left( \lambda_1,\cdots,\lambda_n \right),\quad
\lambda_1\geq \cdots \geq \lambda_n \geq 0,
\end{equation}
and the Jacobian determinant reads 
\begin{equation}\label{singularjacobian}
{\rm d}Y=\pi^{n^2}\Big( \prod_{i=1}^{n-1}i! \Big)^{-2}
\prod_{1\leq i<j\leq n}(\lambda_{j}-\lambda_{i})^2 
{\rm d}\Lambda{\rm d}U{\rm d}V,
\end{equation}
where 
$U$ and $ V$ are chosen from the unitary group  $\mathcal{U}(n)$ with the Haar probability measures ${\rm d}U_1$ and ${\rm d}U_2$ respectively.

Also for simplicity, we will write the bold-faced symbols $\boldsymbol{T}$ and $\boldsymbol{Q}$ to represent the matrix variables $(T_1,\cdots,T_t)$ and $(Q_0,Q_{t+1})$ respectively. Finally, introduce 
\begin{multline}\label{integrationregionprop}
\Omega:=\bigg\{ \big(\boldsymbol{T}, Y, \boldsymbol{Q}\big)\Big|
T_{\alpha},Y\in \mathbb{C}^{n\times n},\ Q_0\in \mathbb{C}^{n\times r_0},\ 
Q_{t+1}\in \mathbb{C}^{n\times r_{t+1}},\ 
\\
T_{\alpha}\ \text{satisfying}\ \eqref{Talpha 2}\ \text{and}\ \eqref{Tdalpha}
\ \text{for}\ \alpha=1,\cdots,t,\ \ 
\sum_{\alpha=1}^t
T_{\alpha}T_{\alpha}^*+Q_0Q_0^*+Q_{t+1}Q_{t+1}^*\leq \mathbb{I}_n
 \bigg\}.
\end{multline}

\subsection{Integral representation} \label{sectIntegral representation}
\quad\ \;Firstly, introduce the following scaled spectral points:% which play the roles as parameters in the $n$-point correlation function:
\begin{equation}\label{zi}
 Z:=\mathrm{diag}(z_1, \ldots,z_n)=z_0 I_{n}+N^{-\frac{1}{4}}\hat{Z}, \ \ \hat{Z}:=\mathrm{diag}(\hat{z}_1, \ldots,\hat{z}_n).
\end{equation} 
Also, for $\alpha=1,\cdots,t$, we introduce  some  $2n\times 2n$ matrices  
\begin{equation}\label{A alpha}
E_{\alpha}:=
\begin{bmatrix}
\sqrt{\gamma_N}(Z-a_{\alpha}\mathbb{I}_n)  &
-Y^* \\
Y & \sqrt{\gamma_N}
\big(Z^*-\overline{a}_{\alpha}\mathbb{I}_n
\big)
\end{bmatrix},
\end{equation}
where \begin{equation}\label{yammaN}
\gamma_N:=\frac{N}{N-n}.
\end{equation}
Denote the $n\times \widetilde{r}_{t+1}$ matrix $\widetilde{Q}_{t+1}$ and the $\widetilde{r}_{t+1}\times \widetilde{r}_{t+1}$ matrix $\widetilde{A}_{t+1}$ by
\begin{equation}\label{Q convenience} 
\widetilde{Q}_{t+1}:=\left[ Q_0,Q_{t+1}
\right],\quad
\widetilde{A}_{t+1}:={\rm diag}\left( z_0\mathbb{I}_{r_0},A_{t+1}
\right),\quad  
\widetilde{r}_{t+1}:=r_0+r_{t+1}.
\end{equation}
 Further, denote the $2n(nt+\widetilde{r}_{t+1})\times 2n(nt+\widetilde{r}_{t+1})$ matrices $\widehat{L}_1$ and $\widehat{L}_2$ by
\begin{equation}\label{L1hat}
\widehat{L}_1:={\rm diag}\big(
E_1\otimes \mathbb{I}_n,\cdots,
E_t\otimes \mathbb{I}_n,
\widehat{B}_{t+1}
\big) 
\end{equation}
and
\begin{equation}\label{L0hat}
\widehat{L}_2:=
\left[
\begin{smallmatrix}
%%%%%%%%%%%%%%%%%%%%%%%%%%%%%%%%%%%%%%%%%%%%%%%%%%%%%%%%
\left[
\left[\begin{smallmatrix}
a_{\alpha}\mathbb{I}_n & \\ & \overline{a}_{\alpha}\mathbb{I}_n
\end{smallmatrix}\right]
\otimes
(T_{\alpha}^*T_{\beta}) \right]_{\alpha,\beta=1}^t
&
\left[
\left[\begin{smallmatrix}
a_{\alpha}\mathbb{I}_n & \\ & \overline{a}_{\alpha}\mathbb{I}_n
\end{smallmatrix}\right]
\otimes
(T_{\alpha}^*\widetilde{Q}_{t+1}) \right]_{\alpha=1}^t
   \\
\left[
\begin{smallmatrix}
\mathbb{I}_n \otimes 
(\widetilde{A}_{t+1}\widetilde{Q}_{t+1}^*
T_{\beta})  &  \\
 &  
\mathbb{I}_n  \otimes 
 (\widetilde{A}_{t+1}^*\widetilde{Q}_{t+1}^*
T_{\beta})
\end{smallmatrix}\right]_{\beta=1}^t
&
\left[\begin{smallmatrix}
\mathbb{I}_n  \otimes 
(\widetilde{A}_{t+1}\widetilde{Q}_{t+1}^*\widetilde{Q}_{t+1})
 &  \\
& 
\mathbb{I}_n  \otimes 
(\widetilde{A}_{t+1}^*\widetilde{Q}_{t+1}^*\widetilde{Q}_{t+1})
\end{smallmatrix}\right]
%%%%%%%%%%%%%%%%%%%%%%%%%%%%%%%%%%%%%%%%%%%%%%%%%%%%%%%%%%%%%
\end{smallmatrix}\right],
\end{equation}
where  a $2n\widetilde{r}_{t+1}\times 2n\widetilde{r}_{t+1}$ matrix  
\begin{equation}\label{B0hat}
\widehat{B}_{t+1}:=
\begin{bmatrix}
\sqrt{\gamma_N}
\big(
Z\otimes \mathbb{I}_{\widetilde{r}_{t+1}}
-\mathbb{I}_n\otimes \widetilde{A}_{t+1}
\big)  &
-Y^*\otimes \mathbb{I}_{\widetilde{r}_{t+1}} \\
 Y\otimes \mathbb{I}_{\widetilde{r}_{t+1}} &  \sqrt{\gamma_N}
\big(
Z^*\otimes \mathbb{I}_{\widetilde{r}_{t+1}}
-\mathbb{I}_n\otimes \widetilde{A}_{t+1}^*
\big)
\end{bmatrix}.
\end{equation}

The starting point is    \cite[Proposition 1.3]{LZ},  which is reformulated as follows for convergence. 
\begin{proposition} (\cite{LZ})\label{intrep}
For the model \eqref{defmat}, assume that $X_0:={\rm diag}(A_0,0_{(N-r)\times (N-r)})$, where $A_0$ is any $r\times r$ complex matrix.  
When  $r+n\leq N$,  the $n$-point correlation function    is  given by  
\begin{equation}\label{algebraequa}
\begin{aligned}
&R_{N}^{(n)}\left( X_0; z_1,\cdots,z_{n} \right)=\frac{1}{C_{N,\tau}}%\frac{Z_{N-n,r}}{Z_{N,r}}\frac{N!}{(N-n)!} \Big( \frac{N-n}{N} \Big)^{\frac{1}{2}(N-n)(N+1+n)}
 e^{-\frac{N}{\tau}
\sum_{k=1}^{n}\left| z_k \right|^2} 
\prod_{1\leq i<j\leq n}\left| z_i-z_j \right|^2
\\
&\times  %e^{-\frac{N}{\tau}\sum_{k=1}^{n}\left| z_k \right|^2} 
 \int_{\mathcal{M}_{n,r}} \big(\det(\mathbb{I}_r-Q^*Q) \big)^{N-n-r}
e^{\frac{N}{\tau}h(Q)}\,
{ {\mathbb{E}}_{\mathrm{ GinUE}_{N-n}( \widetilde{X}_0)}}\Big[\prod_{i=1}^n
\Big| \det\Big( \sqrt{\frac{N}{N-n}}z_i-\widetilde{X} \Big) \Big|^2\Big] {\rm d}Q,
\end{aligned}
\end{equation}
where  
 \begin{equation*}\label{Qintegraldomain}
\mathcal{M}_{n,r}=\left\{ Q\in \mathbb{C}^{n\times r} \big| Q^*Q\leq\mathbb{I}_r\right\},
\end{equation*}
\begin{equation}\label{CN}
C_{N,\tau}= 
\tau^{nN-\frac{n(n-1)}{2}}
\pi^{n(r+1)} N^{-\frac{1}{2}n(n+1)} (N-n)^{-n(N-n)}     \prod_{k=N-n-r}^{N-r-1}  k !,
\end{equation}
\begin{equation}\label{OmegaQdenotion}
h(Q)%{\rm Tr}\left( \widetilde{A}_0\widetilde{A}_0^* \right)
%\sum_{i=1}^n \sum_{k,l=1}^rq_{ik}\overline{q_{il}} (\overline{z_i} a_{kl}+ z_{i} \overline{a_{lk}} )- \\
%& \sum_{j=1}^n\sum_{l_1,l_2,k_1,k_2=1}^r \overline{q_{jl_1}}q_{jl_2} a_{k_1l_1}\overline{a_{k_2l_2}} \Big( \delta_{k_1,k_2}-\sum_{i=1}^{j-1} q_{ik_1}\overline{q_{ik_2}} \Big).
%\sum_{i=1}^n\sum_{l_1,l_2,k_1,k_2}^r
%q_{ik_1}\overline{q_{ik_2}}\Big( \delta_{l_1,l_2}-\sum_{j=1}^i\overline{q_{jl_1}}q_{jl_2} \Big)
%a_{k_1l_1}\overline{a_{k_2l_2}}
= %{\rm Tr}\big( QA_0 Q^* Z^*+ QA_{0}^*  Q^* Z\big)
\sum_{i=1}^n\left( z_i\overline{(QA_0 Q^* )_{i,i}}+\overline{z_i}(QA_0 Q^* )_{i,i}\right)+
\sum_{1\leq i<j\leq n}|(QA_0 Q^* )_{i,j}|^2 -{\rm Tr}\big(Q^* Q A_{0}^* A_0\big),
\end{equation} 
and
\begin{equation}\label{DeltaA0}
\widetilde{X}_0:={\rm diag}(\widetilde{A}_0,0_{(N-n-r)\times (N-n-r)}),\ \ 
\text{where}\ \ 
\widetilde{A}_0:=\sqrt{N/(N-n)}\sqrt{\mathbb{I}_r-Q^*Q}A_0\sqrt{\mathbb{I}_r-Q^*Q}.
\end{equation}

\end{proposition}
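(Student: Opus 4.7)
The plan is to derive the integral representation via a partial Schur decomposition adapted to the block structure of $X_0=\mathrm{diag}(A_0,0_{(N-r)\times(N-r)})$. Any matrix $X$ with $n$ designated eigenvalues $z_1,\ldots,z_n$ admits
\begin{equation*}
X \;=\; U\begin{pmatrix} Z+T & C \\ 0 & X' \end{pmatrix}U^*,
\end{equation*}
with $U\in\mathcal{U}(N)$, $Z=\mathrm{diag}(z_1,\ldots,z_n)$, $T\in\mathbb{C}^{n\times n}$ strictly upper triangular, $C\in\mathbb{C}^{n\times(N-n)}$ and $X'\in\mathbb{C}^{(N-n)\times(N-n)}$. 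Only the first $n$ columns $V$ of $U$ are intrinsically meaningful — they span the invariant subspace for $z_1,\ldots,z_n$ — while the remaining $N-n$ columns carry a residual $\mathcal{U}(N-n)$ gauge to be absorbed into the $X'$ integration. A standard Jacobian computation yields the Vandermonde factor $\prod_{i<j}|z_i-z_j|^2$, and the combinatorial prefactor $N!/(N-n)!$ in the definition of $R_N^{(n)}$ accounts for the unordered choice of these $n$ eigenvalues.

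Next, I would split $V=\binom{V_1}{V_2}$ compatibly with $X_0$, with $V_1\in\mathbb{C}^{r\times n}$, and set $Q:=V_1^*\in\mathbb{C}^{n\times r}$, so that $Q^*Q=V_1V_1^*\leq\mathbb{I}_r$. A direct computation gives
\begin{equation*}
U^*X_0U \;=\; \begin{pmatrix} QA_0Q^* & V_1^*A_0W_1 \\ W_1^*A_0V_1 & W_1^*A_0W_1 \end{pmatrix},
\end{equation*}
where $W_1\in\mathbb{C}^{r\times(N-n)}$ is the top block of the remaining columns of $U$, and the unitarity of $U$ forces $W_1W_1^*=\mathbb{I}_r-Q^*Q$. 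Substituting into $\mathrm{Tr}(X-X_0)(X-X_0)^*$ yields three Gaussian pieces in $T$, $C$, $X'$ together with a $V$-dependent off-diagonal contribution $\mathrm{Tr}(V_1^*A_0^*W_1W_1^*A_0V_1)=\mathrm{Tr}(Q^*QA_0^*A_0)-\sum_{i,j}|(QA_0Q^*)_{i,j}|^2$. Performing the $T$- and $C$-integrations is then elementary: the diagonal of $Z-QA_0Q^*$ supplies the prefactor $e^{-\frac{N}{\tau}\sum|z_k|^2}$ and the linear cross terms $\frac{N}{\tau}\sum_i(z_i\overline{(QA_0Q^*)_{i,i}}+\overline{z_i}(QA_0Q^*)_{i,i})$; the strictly-upper-triangular $T$-integrations cancel precisely the $i<j$ part of $\sum_{i,j}|(QA_0Q^*)_{i,j}|^2$ arising in the off-diagonal contribution; and the $C$-integration is an unconstrained Gaussian centred at $V_1^*A_0W_1$ producing only a multiplicative constant. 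The surviving combination reconstitutes $\frac{N}{\tau}h(Q)$ exactly as in \eqref{OmegaQdenotion}.

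After the $T$ and $C$ integrations, the residual $X'$ integral runs against a Gaussian with mean $W_1^*A_0W_1$. Since the integrand $\prod_i|\det(\sqrt{N/(N-n)}\,z_i-\widetilde X)|^2$ is unitarily invariant and $W_1^*A_0W_1$ is unitarily conjugate to $\mathrm{diag}(\sqrt{\mathbb{I}_r-Q^*Q}\,A_0\,\sqrt{\mathbb{I}_r-Q^*Q},\,0_{(N-n-r)\times(N-n-r)})$, this $X'$-expectation identifies with the $\mathrm{GinUE}_{N-n}(\widetilde X_0)$ expectation with $\widetilde X_0$ as in \eqref{DeltaA0}. Meanwhile, integrating out the residual $\mathcal{U}(N-n)$ freedom parametrising $W$ leaves on $Q=V_1^*$ the classical truncated-unitary marginal density, which produces the factor $\det(\mathbb{I}_r-Q^*Q)^{N-n-r}$; the hypothesis $r+n\leq N$ is precisely what makes this exponent non-negative.

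The main obstacle is the careful bookkeeping of Jacobians and normalization constants so that all pieces assemble exactly into $C_{N,\tau}$ of \eqref{CN}, together with the delicate cancellation described above that converts lower-triangular sums into the upper-triangular $\sum_{i<j}|(QA_0Q^*)_{i,j}|^2$ appearing in $h(Q)$. The unitary invariance of the determinant integrand in the final step is essential — it permits replacing the shifted Gaussian mean by the block-diagonal $\widetilde X_0$ without altering the expectation, which is what converts the computation into a clean smaller-GinUE expectation.
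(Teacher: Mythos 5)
Your route — a partial Schur decomposition adapted to the block structure of $X_0$, identifying $Q=V_1^*$ as a truncation of the unitary, Gaussian integration over $T$, $C$ and the lower-right block, and reduction of that block to a $\mathrm{GinUE}_{N-n}(\widetilde X_0)$ expectation by unitary invariance — is essentially the derivation behind this statement (the present paper does not reprove it; it imports \cite[Proposition 1.3]{LZ}). Your individual computations check out: $U^*X_0U$ has $(1,1)$ block $QA_0Q^*$, unitarity of $U$ gives $W_1W_1^*=\mathbb{I}_r-Q^*Q$, the $(2,1)$-block contribution equals ${\rm Tr}(Q^*QA_0^*A_0)-\sum_{i,j}|(QA_0Q^*)_{i,j}|^2$, $W_1^*A_0W_1$ is unitarily conjugate to ${\rm diag}\big(\sqrt{\mathbb{I}_r-Q^*Q}\,A_0\sqrt{\mathbb{I}_r-Q^*Q},0\big)$, and the truncated-unitary marginal produces $\det(\mathbb{I}_r-Q^*Q)^{N-n-r}$ under $r+n\leq N$ (using $\det(\mathbb{I}_r-Q^*Q)=\det(\mathbb{I}_n-QQ^*)$).

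There is, however, one genuine gap. You attribute to the partial-Schur Jacobian only the Vandermonde $\prod_{i<j}|z_i-z_j|^2$, and then the factor $\prod_{i}\big|\det\big(\sqrt{N/(N-n)}\,z_i-\widetilde X\big)\big|^2$ materializes in your $X'$-integral with no justification. In fact the Jacobian of $X\mapsto (Z,T,C,X',U)$ contains, besides the Vandermonde, the factor $\prod_{i=1}^n\big|\det\big(z_i\mathbb{I}_{N-n}-X'\big)\big|^2$ (already for $n=1$ it is $|\det(z_1-X')|^2$); this is precisely the origin of the characteristic-polynomial expectation in \eqref{algebraequa}, and with the Vandermonde alone the $X'$-integral would be a $z$-independent constant and the formula could not come out. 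A smaller slip: the cancellation you describe is not right as stated — the shift in $T$ merely removes the strictly upper-triangular part of the $(1,1)$ block, and it is the surviving terms $-\sum_i|M_{ii}|^2-\sum_{i>j}|M_{ij}|^2$ (with $M=QA_0Q^*$) that cancel against $+\sum_{i,j}|M_{ij}|^2$ coming from the $(2,1)$ block, leaving exactly the $i<j$ sum in $h(Q)$; your final expression for $h(Q)$ is nonetheless correct. Finally, the assembly of all Gaussian normalizations, coset volumes and the $\sqrt{N/(N-n)}$ rescaling into $C_{N,\tau}$ is left entirely to ``bookkeeping,'' which is acceptable for a sketch but is part of what a complete proof must verify.
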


For the model \eqref{defmat}, under unitary transformation, the eigenvalue statistics of the random matrix $X$ remain unchanged.  Note that 
we have assumed that $a_1=0$  in Section \ref{Translation reduction}, 
so by applying a unitary transformation, we can take the mean matrix $X_0$ defined in \eqref{A0 form} as the following new form:
$$
X_0:={\rm diag}(A_0,0_{n\times n}),
$$
where $A_0$ takes the form
\begin{equation}\label{A0New form}
A_0={\rm diag}\left(a_1\mathbb{I}_{\widetilde{r}_1},\cdots,a_t\mathbb{I}_{r_t},
z_0\mathbb{I}_{r_0},A_{t+1}
\right),\ \ 
\text{where}\ \ \widetilde{r}_1:=r_1-n.
\end{equation}
From \eqref{ralpha N}, we have $\widetilde{r}_1=c_1 N+\widetilde{R}_{1,N}$, where $\widetilde{R}_{1,N}:=R_{1,N}-n$. As the symbol $r_1$ does not appear in \eqref{algebraequa}, for simplicity, we will still use the symbols $r_1$ and $R_{1,N}$ to represent $\widetilde{r}_1$ and $\widetilde{R}_{1,N}$ respectively. According to this convention, in Proposition \ref{intrep} we have
\begin{equation}\label{New r}
r=\sum_{\alpha=0}^{t+1} r_{\alpha}=N-n.
\end{equation}

Now we can apply Proposition \ref{intrep} directly with $r=N-n$ and obtain an integral representation for the $n$-point correlation function. 

\begin{proposition}\label{foranlysis}
\begin{equation}\label{foranlysisequ} 
R_N^{(n)}(X_0;z_1,\cdots,z_n)=
C_{N,n}
\int_{\Omega} \det\Big(
\widehat{L}_1+\sqrt{\gamma_N}\widehat{L}_2
\Big)\exp\{ Nf(\boldsymbol{T},Y,\boldsymbol{Q}) \}
{\rm d}Y
{\rm d}Q_0{\rm d}Q_{t+1} \prod_{\alpha=1}^t
{\rm d}T_{\alpha},
\end{equation}
where $\gamma_N$, the integration domain $\Omega$   
and the volume elements are  respectively given   in \eqref{yammaN}, \eqref{integrationregionprop}     and \eqref{VolumeElement1},  and  \begin{multline}\label{fTY}
f(\boldsymbol{T},Y,\boldsymbol{Q})=\sum_{\alpha=1}^t
c_{\alpha}\log\det(T_{\alpha}T_{\alpha}^*)
+N^{-1}\sum_{\alpha=1}^t
\sum_{j=1}^n (R_{\alpha,N}-j)\log\big(
t_{j,j}^{(\alpha)}
\big)
+\frac{1}{\tau}h(\boldsymbol{T},\boldsymbol{Q})
\\-
(\tau\gamma_N)^{-1}
{\rm Tr}\big(
YY^*
\big)+\sum_{\alpha=1}^tc_{\alpha}\log\det(E_{\alpha})+
N^{-1}\sum_{\alpha=1}^t
(R_{\alpha,N}-n)\log\det\big(
E_{\alpha}
\big),
\end{multline}
with 
\begin{multline}\label{hQrewrite}
h(\boldsymbol{T},\boldsymbol{Q})=
\sum_{\alpha=1}^t\left(-f_{\alpha}{\rm Tr}\big(
T_{\alpha}T_{\alpha}^*
\big)
+N^{-\frac{1}{4}}
\left(
\overline{a}_{\alpha}{\rm Tr}\big(
\hat{Z} T_{\alpha}T_{\alpha}^*
\big)+a_{\alpha}{\rm Tr}\big(
\hat{Z}^* T_{\alpha}T_{\alpha}^*
\big)
\right) 
\right)        \\
+
|z_0|^2\Big(
\sum_{\alpha=1}^t
{\rm Tr}\big(
T_{\alpha}T_{\alpha}^*
\big)+{\rm Tr}\big(
Q_0Q_0^*
\big)+{\rm Tr}\big(
Q_{t+1}Q_{t+1}^*
\big)
\Big)         \\       
+N^{-\frac{1}{4}}
\Big(
\overline{z}_0{\rm Tr}\big(
\hat{Z} Q_0Q_0^*
\big)+z_0{\rm Tr}\big(
\hat{Z}^* Q_0Q_0^*
\big)
+
{\rm Tr}\big(
\hat{Z} Q_{t+1}A_{t+1}^*Q_{t+1}^*
\big)+{\rm Tr}\big(
\hat{Z}^* Q_{t+1}A_{t+1}Q_{t+1}^*
\big)
\Big)  \\
+\sum_{1\leq i < j\leq n}
\Big |
\Big(
\sum_{\alpha=1}^t a_{\alpha}T_{\alpha}T_{\alpha}^*
+z_0Q_0Q_0^*+Q_{t+1}A_{t+1}Q_{t+1}^*
\Big)_{i,j}
\Big|^2          \\
-{\rm Tr}Q_{t+1}
\big(
z_0\mathbb{I}_{r_{t+1}}-A_{t+1}
\big)^*
\big(
z_0\mathbb{I}_{r_{t+1}}-A_{t+1}
\big)
Q_{t+1}^*. 
\end{multline}
Here $R_{\alpha,N}$ is given  in \eqref{ralpha N}, 
\begin{equation}\label{falpha}
f_{\alpha}:=|z_0-a_{\alpha}|^2, \quad \alpha=1,\cdots,t,
\end{equation}
 and 
\begin{equation} \label{norm-1}
C_{N,n}:=
\Big( \frac{N-n}{\pi\tau} \Big)^{n N}
\frac{N^{\frac{n(n+1)}{2}}}{\tau^{\frac{n(n+1)}{2}} \pi^n \prod_{i=1}^{n-1}i!}
e^{-\frac{N}{\tau}\sum_{k=1}^n|z_k|^2}
\prod_{1\leq i <j \leq n}|z_i-z_j|^2
\prod_{\alpha=1}^t
\frac{\pi^{nr_{\alpha}-\frac{n(n-1)}{2}}}
{\prod_{j=1}^n(r_{\alpha}-j)!}.
\end{equation}
\end{proposition}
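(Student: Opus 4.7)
The plan is to start from Proposition \ref{intrep} specialized to $r = N-n$ (as in \eqref{New r}, so that $(\det(\mathbb{I}_r - Q^*Q))^{N-n-r} \equiv 1$) and to combine three ingredients: a block partitioning of $Q$ conformal with the eigenstructure of $A_0$, a QR factorization of the $O(N)$-sized blocks $Q_\alpha$, and the characteristic-polynomial duality of \cite{Gr, LZ240} applied to the inner $\mathrm{GinUE}_{N-n}(\widetilde{X}_0)$ expectation.

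\textbf{Partitioning and QR.} Under the normalization $a_1 = 0$ of Section \ref{Translation reduction}, $A_0$ has the block form \eqref{A0New form}. Partition $Q = [Q_1, \ldots, Q_t, Q_0, Q_{t+1}]$ with $Q_\alpha \in \mathbb{C}^{n \times r_\alpha}$, so that
\begin{equation*}
QA_0Q^* = \sum_{\alpha=1}^t a_\alpha Q_\alpha Q_\alpha^* + z_0 Q_0 Q_0^* + Q_{t+1} A_{t+1} Q_{t+1}^*,
\end{equation*}
and analogously for $\mathrm{Tr}(Q^*Q A_0^* A_0)$. Substituting $z_i = z_0 + N^{-1/4}\hat{z}_i$ from \eqref{zi} into $h(Q)$ of \eqref{OmegaQdenotion} reproduces the structure of $h(\boldsymbol{T}, \boldsymbol{Q})$ in \eqref{hQrewrite}, modulo the identification $Q_\alpha Q_\alpha^* = T_\alpha T_\alpha^*$ made next. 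For each $\alpha = 1, \ldots, t$, perform a QR-type factorization $Q_\alpha = T_\alpha U_\alpha$ with $T_\alpha$ upper triangular as in \eqref{Talpha 2}--\eqref{Tdalpha} and $U_\alpha$ a partial isometry on the Stiefel manifold $V_n(\mathbb{C}^{r_\alpha})$. The Jacobian reads $dQ_\alpha = C_{n,r_\alpha}\prod_j (t_{jj}^{(\alpha)})^{r_\alpha - j} \, dT_\alpha \, d\mu(U_\alpha)$; splitting $r_\alpha = Nc_\alpha + R_{\alpha,N}$ produces exactly the first two terms
\begin{equation*}
\sum_j (r_\alpha - j)\log t_{jj}^{(\alpha)} = Nc_\alpha \log\det(T_\alpha T_\alpha^*) + \sum_j (R_{\alpha,N} - j) \log t_{jj}^{(\alpha)}
\end{equation*}
of $Nf$ in \eqref{fTY}. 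The constraint $QQ^* \leq \mathbb{I}_n$ becomes the domain $\Omega$ of \eqref{integrationregionprop}, and the Stiefel volumes are absorbed into $C_{N,n}$ of \eqref{norm-1}.

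\textbf{Duality and extraction of $\widehat{L}_1, \widehat{L}_2$.} Apply the duality of \cite{Gr, LZ240} to the inner expectation
\begin{equation*}
\mathbb{E}_{\mathrm{GinUE}_{N-n}(\widetilde{X}_0)}\Bigl[\prod_{i=1}^n |\det(\sqrt{\gamma_N}z_i - \widetilde{X})|^2\Bigr],
\end{equation*}
converting it into a Gaussian integral over $Y \in \mathbb{C}^{n \times n}$ with weight $\exp\{-N(\tau\gamma_N)^{-1}\mathrm{Tr}(YY^*)\}$---the source of the $-(\tau\gamma_N)^{-1}\mathrm{Tr}(YY^*)$ term of $f$---multiplied by a Hermitized block determinant in $Z, Y, \widetilde{X}_0$. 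To expose the block structure, exploit the similarity $\widetilde{X}_0 \sim \sqrt{\gamma_N}A_0(\mathbb{I}-Q^*Q)$ via $S = \sqrt{\mathbb{I}-Q^*Q}$, so that the determinant is preserved but $\widetilde{X}_0$ is replaced by a rank-$n$ perturbation of the block-diagonal operator $\sqrt{\gamma_N}A_0$. The unperturbed part factors over the eigenspaces of $A_0$ as $\prod_\alpha \det(E_\alpha)^{r_\alpha} \det(\widehat{B}_{t+1})$. Splitting $\det(E_\alpha)^{r_\alpha} = \det(E_\alpha)^n \cdot \det(E_\alpha)^{r_\alpha - n}$, the first factor coalesces into $\det(E_\alpha \otimes \mathbb{I}_n)$, which together with $\det(\widehat{B}_{t+1})$ sits inside $\widehat{L}_1$ of \eqref{L1hat}; the second passes to the exponent as $Nc_\alpha \log\det(E_\alpha) + (R_{\alpha,N} - n)\log\det(E_\alpha)$, matching the last two terms of $Nf$. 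The rank-$n$ perturbation from $A_0 Q^*Q$, transported through the $2\times 2$ Hermitization, contributes $\sqrt{\gamma_N}\widehat{L}_2$ of \eqref{L0hat}; its specific entries $a_\alpha T_\alpha^* T_\beta$, $a_\alpha T_\alpha^* \widetilde{Q}_{t+1}$, $\widetilde{A}_{t+1}\widetilde{Q}_{t+1}^* T_\beta$, etc.\ arise because the block-diagonal action of $A_0$ pairs the left index (supplying the scalar $a_\alpha$ or block $\widetilde{A}_{t+1}$) with the right QR components ($T_\beta$, $\widetilde{Q}_{t+1}$), while the $U_\alpha$-dependence cancels thanks to the isometry identity $U_\alpha U_\alpha^* = \mathbb{I}_n$.

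\textbf{Main obstacle.} The delicate technical step is the duality reduction: verifying that the combination of the finite $z_0, A_{t+1}$ blocks (kept intact in $\widehat{B}_{t+1}$) with the rank-$n$ $Q^*Q$-perturbation and the tensor factor $\mathbb{I}_n$ (reflecting the $n$ test points $z_i$) assembles precisely into $\widehat{L}_1 + \sqrt{\gamma_N}\widehat{L}_2$ of \eqref{L1hat}--\eqref{L0hat}, with the correct cross-block entries in $\widehat{L}_2$. A parallel but mechanical task is to combine $C_{N,\tau}$ from \eqref{CN}, the Stiefel volumes $\mathrm{vol}(V_n(\mathbb{C}^{r_\alpha}))$, the duality normalization, and the Gaussian prefactor $e^{-(N/\tau)\sum|z_k|^2}\prod|z_i - z_j|^2$ into the single constant $C_{N,n}$ in \eqref{norm-1}.
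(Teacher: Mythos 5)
Your proposal follows essentially the same route as the paper: Proposition \ref{intrep} with $r=N-n$, the duality formula of \cite{Gr,LZ240} for the inner expectation, conjugation by $(\mathbb{I}_r-Q^*Q)^{\pm 1/2}\otimes\mathbb{I}_n$ to remove the square roots, block partition of $Q$, QR decomposition $Q_\alpha=T_\alpha U_\alpha$ with its Jacobian and $r_\alpha=c_\alpha N+R_{\alpha,N}$ split, extraction of the $(\det E_\alpha)^{r_\alpha-n}$ factors into the exponent, and tensor reordering to reach $\widehat{L}_1+\sqrt{\gamma_N}\widehat{L}_2$. The only loose point is attributing the disappearance of the $U_\alpha$'s inside the determinant solely to $U_\alpha U_\alpha^*=\mathbb{I}_n$; in the paper this rests on the commutation of the block unitaries $\widetilde{U}_{n,r}$ with the block-diagonal $L_1$ (Eqs.\ \eqref{invariant1}--\eqref{invariant2}), while $U_\alpha U_\alpha^*=\mathbb{I}_n$ is what removes them from $h(Q)$ — a detail, not a gap in the approach.
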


\begin{proof}
We first deal with the inner expectation. Using of  the duality formula   
in \cite[Eqn(40)]{Gr} or \cite[Corollary 2.8]{LZ240}  gives rise to 
\begin{equation}\label{EGinUE dual}
{ {\mathbb{E}}_{\mathrm{ GinUE}_{N-n}( \widetilde{X}_0)}}\Big[\prod_{i=1}^n
\Big| \det\Big( \sqrt{  \gamma_{N}}z_i-\widetilde{X} \Big) \Big|^2\Big]
=
\Big( \frac{N-n}{\pi\tau} \Big)^{n^2} 
\int\     
 D_Y\ e^{-\frac{N-n}{\tau}{\rm Tr}(YY^*)}{\rm d}Y ,
\end{equation}
where $\gamma_N$ is defined in \eqref{yammaN} and 
 \begin{equation}
 D_Y=\det\begin{bmatrix}
\sqrt {\gamma_{N}}\mathbb{I}_{r}\otimes Z -\widetilde{A}_0\otimes \mathbb{I}_{n} 
 &   -\mathbb{I}_{r} \otimes Y^*    \\
\mathbb{I}_{r} \otimes Y &   
\sqrt{ \gamma_{N}}\mathbb{I}_{r}\otimes Z^* -\widetilde{A}_0^*\otimes \mathbb{I}_{n} 
\end{bmatrix}.
 \end{equation}
Recalling   $\widetilde{A}_0$ in  \eqref{DeltaA0}, multiply 
$\mathbb{I}_2\otimes \big( (\mathbb{I}_r-Q^*Q)^{-\frac{1}{2}}\otimes \mathbb{I}_n \big)$
 and its inverse on both sides, we obtain 
\begin{equation}\label{First}
\begin{aligned}
&D_Y=\det\begin{bmatrix}
\sqrt{  \gamma_{N}}\Big(\mathbb{I}_{r}\otimes Z -A_0(\mathbb{I}_r-Q^*Q)\otimes \mathbb{I}_{n} \Big)
 &   -\mathbb{I}_{r} \otimes Y^*    \\
\mathbb{I}_{r} \otimes Y &   
\sqrt{  \gamma_{N}}\Big(\mathbb{I}_{r}\otimes Z^* -A_0^*(\mathbb{I}_r-Q^*Q)\otimes \mathbb{I}_{n} \Big)
\end{bmatrix} .      
\end{aligned}
\end{equation}
To simplify the above determinant further, we need to divide the matrix  variable $Q$  into  a block form,  according to the structure  of $A_0$ as in \eqref{A0 form}
\begin{equation}\label{Q0 division}
Q=\left[Q_1,\cdots,Q_t
,Q_0,Q_{t+1}
\right],
\end{equation}
where $Q_{\alpha}$ is of size $n\times r_{\alpha}$ for $\alpha=0,1,\cdots,t+1$. %Besides, set \eqref{Q convenience}. 

After elementary matrix operations, we see  from \eqref{First} that  
\begin{equation}\label{L1 add L2}
\begin{aligned}
D_Y  =\det\Big(
L_1+\sqrt{ \gamma_{N}}L_2
\Big),
\end{aligned}
\end{equation}
where
\begin{equation}\label{L1}
L_1={\rm diag}\left( B_{1,r_{1}},
\cdots,B_{t,r_t}, B_{t+1}
\right),
\end{equation}
with
\begin{equation}\label{B alpha}
B_{\alpha,r_{\alpha}}=\begin{bmatrix}
\sqrt{  \gamma_{N}}\big( \mathbb{I}_{r_{\alpha}}\otimes 
(Z-a_{\alpha} \mathbb{I}_n) \big)
& -\mathbb{I}_{r_{\alpha}}\otimes Y^*   \\  \mathbb{I}_{r_{\alpha}}\otimes Y  &
\sqrt{ \gamma_{N} }\big( \mathbb{I}_{r_{\alpha}}\otimes 
(Z^*-\overline{a}_{\alpha}  \mathbb{I}_n) \big)
\end{bmatrix},
\end{equation}
for $\alpha=1,\cdots,t$,
and
\begin{equation}\label{B0}
B_{t+1}=\begin{bmatrix}
\sqrt{  \gamma_{N}}\big( \mathbb{I}_{\widetilde{r}_{t+1}}\otimes 
Z-\widetilde{A}_{t+1}\otimes \mathbb{I}_n \big)
& -\mathbb{I}_{\widetilde{r}_{t+1}}\otimes Y^*   \\  
\mathbb{I}_{\widetilde{r}_{t+1}}\otimes Y  &
\sqrt{  \gamma_{N}}\big( \mathbb{I}_{\widetilde{r}_{t+1}}\otimes 
Z^*-\widetilde{A}_{t+1}^* \otimes \mathbb{I}_n \big)
\end{bmatrix},
\end{equation}
while 
\begin{equation}\label{L2 another}
L_2=
\left[\begin{smallmatrix}
\mathbb{I}_{2n\sum_{\alpha=1}^t r_{\alpha}} && \\
& \widetilde{A}_{t+1}\widetilde{Q}_{t+1}^* \otimes \mathbb{I}_n & \\
&& \widetilde{A}_{t+1}^*\widetilde{Q}_{t+1}^* \otimes \mathbb{I}_n
\end{smallmatrix}\right]
\widetilde{L}_2
\left[\begin{smallmatrix}
\mathbb{I}_{2n\sum_{\alpha=1}^t r_{\alpha}} && \\
& \widetilde{Q}_{t+1} \otimes \mathbb{I}_n & \\
&& \widetilde{Q}_{t+1} \otimes \mathbb{I}_n
\end{smallmatrix}\right],
\end{equation}
with
\begin{equation}\label{tilde L2}
\widetilde{L}_2=\begin{bmatrix}
\left[
\begin{smallmatrix}
a_{\alpha}Q_{\alpha}^*Q_{\beta}\otimes \mathbb{I}_n &  \\
& \overline{a}_{\alpha}Q_{\alpha}^*Q_{\beta}\otimes \mathbb{I}_n
\end{smallmatrix}\right]_{\alpha,\beta=1}^t
&
\left[
\begin{smallmatrix}
a_{\alpha}Q_{\alpha}^*\otimes \mathbb{I}_n &  \\
& \overline{a}_{\alpha}Q_{\alpha}^*\otimes \mathbb{I}_n
\end{smallmatrix}\right]_{\alpha=1}^t   \\
\left[\begin{smallmatrix}
Q_{\beta}
\otimes \mathbb{I}_n &  \\
& Q_{\beta}
\otimes \mathbb{I}_n
\end{smallmatrix}\right]_{\beta=1}^t 
&
\left[\begin{smallmatrix}
\mathbb{I}_n
\otimes \mathbb{I}_n &  \\
& \mathbb{I}_n
\otimes \mathbb{I}_n
\end{smallmatrix}\right]
\end{bmatrix}.
\end{equation}

In order to  simplify $D_Y$ in \eqref{L1 add L2} again,  we use the QR decomposition for  rectangular matrices

\begin{equation}\label{Qalpha QR}
Q_{\alpha}=T_{\alpha} U_{\alpha}, \quad \alpha=1,2,\ldots,t,
\end{equation}
where   $U_{\alpha}$ is  an  $n\times r_{\alpha}$ complex matrix  such that $U_{\alpha}U_{\alpha}^*=\mathbb{I}_n$
and  $T_{\alpha}$ is an $n\times n$ upper triangular matrix with  non-negative  diagonal elements.
Here we assume that all  $r_{\alpha}\geq n$ since  $r_{\alpha}$ tends to infinity as $N \rightarrow +\infty$. 
Equivalently,    $U_{\alpha}$ can be taken from   the first $n$ rows of a $r_{\alpha}\times r_{\alpha}$ unitary matrix $\hat{U}_{\alpha}$ \begin{equation}\label{Qalpha QR 1}
Q_{\alpha}=\big[ T_{\alpha},0 \big]\hat{U}_{\alpha}.
%\quad\hat{U}_{\alpha}\in \mathcal{U}(r_{\alpha});
\end{equation}
%Now we return to the calculations from \eqref{L1 add L2} to \eqref{tilde L2}. 
Hence,    for $\alpha,\beta=1,\cdots,t$,
 \begin{equation}\label{Qalpha time Qbeta}
Q_{\alpha}^*Q_{\beta}=
\hat{U}_{\alpha}^*\begin{bmatrix}
T_{\alpha}^*T_{\beta} & 0 \\
0 & 0
\end{bmatrix}\hat{U}_{\beta},
\end{equation}
from which  we  can rewrite
\begin{small}
\begin{equation}\label{tilde L2 extract}
\begin{aligned}
\widetilde{L}_2=
%&\left[\begin{smallmatrix}
%\left[\begin{smallmatrix}
%\hat{U}_{1}^*  &  \\
% & \hat{U}_{1}^*
%\end{smallmatrix}\right]\otimes \mathbb{I}_n
%&&&   \\
%& \ddots &&    \\
%&& 
%\left[\begin{smallmatrix}
%\hat{U}_{t}^*  &  \\
% & \hat{U}_{t}^*
%\end{smallmatrix}\right]\otimes \mathbb{I}_n
% &
%\\
%&&& \mathbb{I}_{2n^2}
%\end{smallmatrix}\right]      
%%%%%%%%%%%%%%%%%%%%%%%%%%%%%%%%%%%%%%%%%%%%%%%%%%%%%%%%%%%%%%%%%%%
\widetilde{U}_{n,n}^*\left[
\begin{smallmatrix}
\left[
\begin{smallmatrix}
a_{\alpha}
\left[\begin{smallmatrix}
T_{\alpha}^*T_{\beta} & 0  \\ 0 & 0
\end{smallmatrix}\right]\otimes \mathbb{I}_n
 &  \\
& \overline{a}_{\alpha}
\left[\begin{smallmatrix}
T_{\alpha}^*T_{\beta} & 0  \\ 0 & 0
\end{smallmatrix}\right]\otimes \mathbb{I}_n
\end{smallmatrix}\right]_{\alpha,\beta=1}^t
&
\left[\begin{smallmatrix}
a_{\alpha}\left[\begin{smallmatrix}
T_{\alpha}^* \\
0 
\end{smallmatrix}\right]\otimes \mathbb{I}_n  &  \\
 & \overline{a}_{\alpha} 
 \left[\begin{smallmatrix}
T_{\alpha}^* \\
0 
\end{smallmatrix}\right]
 \otimes \mathbb{I}_n
\end{smallmatrix}\right]_{\alpha=1}^t
   \\
\left[
\begin{smallmatrix}
\begin{bmatrix}
T_{\beta} & 0 
\end{bmatrix}\otimes \mathbb{I}_n  &  \\
 &  
\begin{bmatrix}
T_{\beta} & 0 
\end{bmatrix}\otimes \mathbb{I}_n 
\end{smallmatrix}\right]_{\beta=1}^t
&
\left[\begin{smallmatrix}
\mathbb{I}_n
\otimes \mathbb{I}_n &  \\
& \mathbb{I}_n
\otimes \mathbb{I}_n
\end{smallmatrix}\right]
\end{smallmatrix}\right]\widetilde{U}_{n,n},
%\\
%&\times
%\left[\begin{smallmatrix}
%\left[\begin{smallmatrix}
%\hat{U}_{1}  &  \\
% & \hat{U}_{1}
%\end{smallmatrix}\right]\otimes \mathbb{I}_n
%&&&   \\
%& \ddots &&    \\
%&& 
%\left[\begin{smallmatrix}
%\hat{U}_{t}  &  \\
% & \hat{U}_{t}
%\end{smallmatrix}\right]\otimes \mathbb{I}_n
% &
%\\
%&&& \mathbb{I}_{2n^2}
%\end{smallmatrix}\right].
\end{aligned}
\end{equation}
\end{small}
where 
 the unitary matrix 
\begin{equation}
\widetilde{U}_{n,r}=\mathrm{diag} 
\bigg( 
\left[
 \begin{matrix}
\hat{U}_{1}  &  \\
 & \hat{U}_{1}
\end{matrix}
\right] \otimes \mathbb{I}_n
, \ldots, \left[
 \begin{matrix}
\hat{U}_{t}  &  \\
 & \hat{U}_{t}
\end{matrix}
\right] \otimes \mathbb{I}_n, \mathbb{I}_{2nr}\bigg).
\end{equation}

Notice the  commutativity $ \widetilde{U}_{n, \widetilde{r}_{t+1}} L_1= L_1\widetilde{U}_{n, \widetilde{r}_{t+1}}$, 
\begin{small}
\begin{equation}\label{invariant1}
\begin{aligned}
&\left[\begin{smallmatrix}
\mathbb{I}_{2n\sum_{\alpha=1}^t r_{\alpha}} && \\
& \widetilde{A}_{t+1}\widetilde{Q}_{t+1}^* \otimes \mathbb{I}_n & \\
&& \widetilde{A}_{t+1}^*\widetilde{Q}_{t+1}^* \otimes \mathbb{I}_n
\end{smallmatrix}\right]
 \widetilde{U}_{n,n}^*
=       \widetilde{U}_{n, \widetilde{r}_{t+1}}^* 
\left[\begin{smallmatrix}
\mathbb{I}_{2n\sum_{\alpha=1}^t r_{\alpha}} && \\
& \widetilde{A}_{t+1}\widetilde{Q}_{t+1}^* \otimes \mathbb{I}_n & \\
&& \widetilde{A}_{t+1}^*\widetilde{Q}_{t+1}^* \otimes \mathbb{I}_n
\end{smallmatrix}\right]
\end{aligned}
\end{equation}
\end{small}
and
\begin{small}
\begin{equation}\label{invariant2}
\begin{aligned}
 \widetilde{U}_{n,n}
\left[\begin{smallmatrix}
\mathbb{I}_{2n\sum_{\alpha=1}^t r_{\alpha}} && \\
& \widetilde{Q}_{t+1} \otimes \mathbb{I}_n & \\
&&\widetilde{Q}_{t+1} \otimes \mathbb{I}_n
\end{smallmatrix}\right] 
=              
&\left[\begin{smallmatrix}
\mathbb{I}_{2n\sum_{\alpha=1}^t r_{\alpha}} && \\
& \widetilde{Q}_{t+1} \otimes \mathbb{I}_n & \\
&& \widetilde{Q}_{t+1}\otimes \mathbb{I}_n
\end{smallmatrix}\right]
  \widetilde{U}_{n, \widetilde{r}_{t+1}},
\end{aligned}
\end{equation}
\end{small}
 we see  from \eqref{L2 another} and \eqref{tilde L2} that   \eqref{L1 add L2}   changes to 
 \begin{equation}\label{L1 add L2hat}
D_Y=\det\left(L_1+\sqrt{\gamma_N}\widetilde{L}'_2\right),
\end{equation}
where
\begin{equation}\label{L2hat}
\widetilde{L}'_2=\left[
\begin{smallmatrix}
\left[
\begin{smallmatrix}
a_{\alpha}
\left[\begin{smallmatrix}
T_{\alpha}^*T_{\beta} & 0  \\ 0 & 0
\end{smallmatrix}\right]\otimes \mathbb{I}_n
 &  \\
& \overline{a}_{\alpha}
\left[\begin{smallmatrix}
T_{\alpha}^*T_{\beta} & 0  \\ 0 & 0
\end{smallmatrix}\right]\otimes \mathbb{I}_n
\end{smallmatrix}\right]_{\alpha,\beta=1}^t
&
\left[\begin{smallmatrix}
a_{\alpha}
\big(
\left[\begin{smallmatrix}
T_{\alpha}^* \\
0 
\end{smallmatrix}\right]
\widetilde{Q}_{t+1}
\big)
\otimes \mathbb{I}_n  &  \\
 & \overline{a}_{\alpha} 
\big(
 \left[\begin{smallmatrix}
T_{\alpha}^* \\
0 
\end{smallmatrix}\right]\widetilde{Q}_{t+1}
\big)
 \otimes \mathbb{I}_n
\end{smallmatrix}\right]_{\alpha=1}^t
   \\
\left[
\begin{smallmatrix}
\widetilde{A}_{t+1}\widetilde{Q}_{t+1}^*
\left[
\begin{smallmatrix}
T_{\beta} & 0 
\end{smallmatrix}\right]\otimes \mathbb{I}_n  &  \\
 &  
 \widetilde{A}_{t+1}^*\widetilde{Q}_{t+1}^*
\left[
\begin{smallmatrix}
T_{\beta} & 0 
\end{smallmatrix}\right]\otimes \mathbb{I}_n 
\end{smallmatrix}\right]_{\beta=1}^t
&
\left[\begin{smallmatrix}
\widetilde{A}_{t+1}\widetilde{Q}_{t+1}^*\widetilde{Q}_{t+1}
\otimes \mathbb{I}_n &  \\
& \widetilde{A}_{t+1}^*\widetilde{Q}_{t+1}^*\widetilde{Q}_{t+1}
\otimes \mathbb{I}_n
\end{smallmatrix}\right]
\end{smallmatrix}\right].
\end{equation}
 Eliminate zeros  from   $\widetilde{L}'_2$ and do  simple matrix   operations,  we thus derive   from \eqref{L1 add L2hat}  that 
\begin{equation}\label{tildeL1 add L0}
D_Y
=\det\left(
\widetilde{L}_1+ \sqrt{\gamma_N} \widetilde{L}''_2
\right)\prod_{\alpha=1}^t
\big(\det(E_{\alpha}) \big)^{r_{\alpha}-n},
\end{equation}
where for $\alpha=1,\cdots,t$, $E_{\alpha}$ is defined in \eqref{A alpha},
\begin{equation}\label{tilde L1}
\widetilde{L}_1=
{\rm diag}\big(
B_{1,n},\cdots,B_{t,n},
B_{t+1}
\big),
\end{equation}
and
\begin{equation}\label{L0}
\widetilde{L}''_2=
\left[
\begin{smallmatrix}
%%%%%%%%%%%%%%%%%%%%%%%%%%%%%%%%%%%%%%%%%%%%%%%%%%%%%%%%
\left[
\begin{smallmatrix}
a_{\alpha}
(T_{\alpha}^*T_{\beta}) \otimes \mathbb{I}_n
 &  \\
& \overline{a}_{\alpha}
(T_{\alpha}^*T_{\beta}) \otimes \mathbb{I}_n
\end{smallmatrix}\right]_{\alpha,\beta=1}^t
&
\left[\begin{smallmatrix}
a_{\alpha}
(
T_{\alpha}^*
\widetilde{Q}_{t+1}
)
\otimes \mathbb{I}_n  &  \\
 & \overline{a}_{\alpha} 
(
T_{\alpha}^*\widetilde{Q}_{t+1}
)
 \otimes \mathbb{I}_n
\end{smallmatrix}\right]_{\alpha=1}^t
   \\
\left[
\begin{smallmatrix}
(\widetilde{A}_{t+1}\widetilde{Q}_{t+1}^*
T_{\beta})\otimes \mathbb{I}_n  &  \\
 &  
 (\widetilde{A}_{t+1}^*\widetilde{Q}_{t+1}^*
T_{\beta})\otimes \mathbb{I}_n 
\end{smallmatrix}\right]_{\beta=1}^t
&
\left[\begin{smallmatrix}
(\widetilde{A}_{t+1}\widetilde{Q}_{t+1}^*\widetilde{Q}_{t+1})
\otimes \mathbb{I}_n &  \\
& (\widetilde{A}_{t+1}^*\widetilde{Q}_{t+1}^*\widetilde{Q}_{t+1})
\otimes \mathbb{I}_n
\end{smallmatrix}\right]
%%%%%%%%%%%%%%%%%%%%%%%%%%%%%%%%%%%%%%%%%%%%%%%%%%%%%%%%%%%%%
\end{smallmatrix}\right].
\end{equation}

To extract the tensor product further, we need to change the order of tensor product.  By  Proposition \ref{tensorproperty}, 
 we have 
\begin{equation}\label{L1hat add L0hat}
D_Y
=
\det\left(
\widehat{L}_1+\sqrt{\gamma_N}\widehat{L}_2
\right)\prod_{\alpha=1}^t
\big( \det(E_{\alpha}) \big)^{r_{\alpha}-n},
\end{equation}
where $\widehat{L}_1$ and $\widehat{L}_2$ are defined in \eqref{L1hat} and \eqref{L0hat} respectively.

%%%%%%%%%%%%%%%%%%%%%%%%%%%%%%%%%%%%%%%%%%%%%%%%%%%%%%%%%%%%%%%%%%%%%%%%%%%%%%%%%%%%%%

 On the other hand, rewrite $T_{\alpha}$ as  sum of  a diagonal matrix $\sqrt{T_{{\rm d},\alpha}}$ and a strictly upper triangular  matrix ${T_{{\rm u},\alpha}}$, which is done in \eqref{Talpha 2} and \eqref{Tdalpha}.  
The Jacobian of the transform \eqref{Qalpha QR} reads (see e.g.   \cite[Proposition 3.2.5]{FR10})
\begin{equation}\label{Qalpha QR Jacobian}
{\rm d}Q_{\alpha}=
\frac{\pi^{n r_{\alpha}-\frac{n(n-1)}{2}}}{\prod_{j=1}^n(r_{\alpha}-j)!}
\prod_{j=1}^n \big( t_{j,j}^{(\alpha)} \big)^{r_{\alpha}-j}
{\rm d}T_{\alpha} {\rm d}U_{\alpha},
\end{equation}
where  the  induced measure   from the Haar measure  has been  chosen such that  
$$\int {\rm d} U_{\alpha}=1,\quad \alpha=1,\ldots,t.$$

Finally,  recalling  \eqref{algebraequa},   substituting the QR decomposition \eqref{Qalpha QR} into $h(Q)$,  
combine    \eqref{EGinUE dual}, \eqref{L1hat add L0hat} and \eqref{Qalpha QR Jacobian}, after integrating out $U_{\alpha}$ and with the above notations we  have   the desired result. 
\end{proof}

\subsection{Localization reduction}

 It's easy to see from  \eqref{A alpha} that  
\begin{equation} \label{Aa}
E_{\alpha}=
\begin{bmatrix}
\sqrt{ \gamma_N} (z_0-a_{\alpha}) \mathbb{I}_n &
-Y^* \\
Y &
\sqrt{ \gamma_N} \overline{z_0-a_{\alpha}}\mathbb{I}_n
\end{bmatrix}+
\sqrt{ \gamma_N}N^{-\frac{1}{4}}
\begin{bmatrix}
\hat{Z}  &   \\
&   \hat{Z}^*
\end{bmatrix},
\end{equation}
and 
\begin{equation}\label{logAalpha decompose}
\log\det(E_{\alpha})=\log\det\left(
\gamma_N f_{\alpha}\mathbb{I}_n+YY^*
\right)+\log\det\left(
\mathbb{I}_{2n}+\sqrt{ \gamma_N}N^{-\frac{1}{4}}
\widehat{E}_{\alpha}
\right),
\end{equation}
where, for $ \alpha=1,\cdots,t$, $f_{\alpha}$ is defined in \eqref{falpha}, and
\begin{equation}\label{HAalpha}
\begin{aligned}
&\widehat{E}_{\alpha}=
\left[\begin{smallmatrix}
\sqrt{\gamma_N}\overline{z_0-a_{\alpha}}\left(
 \gamma_N f_{\alpha}\mathbb{I}_n+Y^*Y
\right)^{-1}\hat{Z}
& Y^*\left(
\gamma_N f_{\alpha}\mathbb{I}_n+YY^*
\right)^{-1}\hat{Z}^*       \\
-Y\left(
\gamma_N f_{\alpha}\mathbb{I}_n+Y^*Y
\right)^{-1}\hat{Z}
& \sqrt{\gamma_N}(z_0-a_{\alpha})\left(
\gamma_Nf_{\alpha}\mathbb{I}_n+YY^*
\right)^{-1}\hat{Z}^*
\end{smallmatrix}\right].
\end{aligned}
\end{equation}

Take all the  leading terms of $f(\boldsymbol{T},Y,\boldsymbol{Q})$ in  \eqref{fTY} and put
\begin{equation}\label{f0TY}
\widetilde{f}(\boldsymbol{T},Y,\boldsymbol{Q})=\sum_{\alpha=1}^t\big(
c_{\alpha}\log\det(T_{\alpha}T_{\alpha}^*) 
+c_{\alpha}\log\det\big(
f_{\alpha}\mathbb{I}_n+YY^*
\big)
\big)   
 +
h_{0}(\boldsymbol{T},\boldsymbol{Q})-{\rm Tr}(YY^*)
,
\end{equation}
with
\begin{multline}\label{h0T}
h_{0}(\boldsymbol{T},\boldsymbol{Q})= 
-\sum_{\alpha=1}^t f_{\alpha}{\rm Tr}\big(
T_{\alpha}T_{\alpha}^*
\big)
-{\rm Tr}Q_{t+1}
\big(
z_0\mathbb{I}_{r_{t+1}}-A_{t+1}
\big)^*
\big(
z_0\mathbb{I}_{r_{t+1}}-A_{t+1}
\big)
Q_{t+1}^*               \\
+
|z_0|^2\bigg(
\sum_{\alpha=1}^t
{\rm Tr}\big(
T_{\alpha}T_{\alpha}^*
\big)+{\rm Tr}\big(
Q_0Q_0^*
\big)+{\rm Tr}\big(
Q_{t+1}Q_{t+1}^*
\big)
\bigg)         \\ 
+\sum_{1\leq i < j\leq n}
\bigg|
\bigg(
\sum_{\alpha=1}^t a_{\alpha}T_{\alpha}T_{\alpha}^*
+z_0Q_0Q_0^*+Q_{t+1}A_{t+1}Q_{t+1}^*
\bigg)_{i,j}
\bigg|^2.         
\end{multline}

By restricting the  integration region to 
\begin{equation}\label{regiondelta}
\Omega_{N,\delta}=\Omega
\cap
A_{N,\delta},
\end{equation}
where $\Omega$ is defined in 
\eqref{integrationregionprop} and
\begin{equation}\label{ANdelta}
A_{N,\delta}:=\bigg\{ (\boldsymbol{T},Y,\boldsymbol{Q})\Big|
\sum_{\alpha=1}^t
\left(
\big\|
T_{{\rm d},\alpha}- c_{\alpha} f_{\alpha}^{-1}
\mathbb{I}_n
\big\|^2
+\|T_{{\rm u},\alpha}\|^2
\right)  
+ 
\|Y\|^2+\|Q_0\|^2+\|Q_{t+1}\|^2
\leq \delta \bigg\},
\end{equation}
we have    
\begin{proposition}\label{RNn delta} 
Let $\tau=1+N^{-\frac{1}{2}}\hat{\tau}$ and assume that $P_{00}(z_0)=1$, then with the same notations  and assumptions as in  Proposition \ref{foranlysis}, for any positive constant $\delta$ there exists $\Delta>0$ such that
\begin{equation}\label{RNndelta}
R_N^{(n)}(X_0;z_1,\cdots,z_n)=
D_{N,n}\,\Big(
I_{N,\delta}+O\big(
e^{-\frac{1}{2}N\Delta}
\big)
\Big),
\end{equation}
where
\begin{equation}\label{INdelta}
I_{N,\delta}:=
  \int_{\Omega_{N,\delta}} 
   \det\Big(
\widehat{L}_1+\sqrt{\gamma_N}\widehat{L}_2
\Big)
  \exp\Big\{ N\Big(f(\boldsymbol{T},Y,\boldsymbol{Q})-\widetilde{f}
  \big(\big(\sqrt{ c_{\alpha}/f_{\alpha}}\mathbb{I}_n\big)_{\alpha},0,0\big)\Big)
 \Big\} {\rm d}V,
\end{equation}
  and 
\begin{multline}\label{DNn}
{\rm d}V:= {\rm d}Y
{\rm d}Q_0{\rm d}Q_{t+1} \prod_{\alpha=1}^t
{\rm d}T_{\alpha},\quad D_{N,n}:=
e^{nN(|z_0|^2-1)}\prod_{\alpha=1}^t
c_{\alpha}^{nNc_{\alpha}}
\\
\times \Big( \frac{N-n}{\pi\tau} \Big)^{n N}
\frac{N^{\frac{n(n+1)}{2}}}{\tau^{\frac{n(n+1)}{2}} \pi^n \prod_{i=1}^{n-1}i!}
e^{-\frac{N}{\tau}\sum_{k=1}^n|z_k|^2}
\prod_{1\leq i <j \leq n}|z_i-z_j|^2
\prod_{\alpha=1}^t
\frac{\pi^{nr_{\alpha}-\frac{n(n-1)}{2}}}
{\prod_{j=1}^n(r_{\alpha}-j)!}.
\end{multline}

\end{proposition}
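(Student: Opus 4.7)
The plan is to execute a standard Laplace-method localization: extract the exponential maximum of the leading-order exponent $\widetilde{f}$ from \eqref{f0TY}, verify a strict maximum principle for $\widetilde{f}$ on $\Omega$, and argue that all subleading corrections (including the determinant prefactor) are too mild to disturb the concentration. The first order of business is to confirm that the value $\widetilde{f}(\boldsymbol{T}^{\star},0,0)$ at the candidate maximizer $T_{\mathrm{d},\alpha}^{\star}=(c_\alpha/f_\alpha)\mathbb{I}_n$, $T_{\mathrm{u},\alpha}^{\star}=0$ accounts precisely for the ratio $D_{N,n}/C_{N,n}$. At this point the first two contributions to \eqref{f0TY} collapse to $n\sum_\alpha c_\alpha\log(c_\alpha/f_\alpha)$ and $n\sum_\alpha c_\alpha\log f_\alpha$, while $h_0$ evaluates to $-n\sum_\alpha c_\alpha + n|z_0|^2\sum_\alpha c_\alpha/f_\alpha = n(|z_0|^2-1)$ using $\sum_\alpha c_\alpha = 1$ and the critical-edge hypothesis $P_{00}(z_0)=1$. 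Summing, $N\widetilde{f}(\boldsymbol{T}^{\star},0,0)=nN\sum_\alpha c_\alpha\log c_\alpha + nN(|z_0|^2-1)$, which is exactly the exponent bundled into $D_{N,n}$ in \eqref{DNn}.

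Next I would separate $f=\widetilde{f}+r_N$ on $\Omega$, where $r_N$ collects (i) the $N^{-1}R_{\alpha,N}\log t_{j,j}^{(\alpha)}$ pieces of \eqref{fTY}, (ii) the $N^{-1/4}$ corrections from $\hat{Z}$ in $h(\boldsymbol{T},\boldsymbol{Q})$, (iii) the $(\tau\gamma_N)^{-1}-1$ discrepancy in the $\mathrm{Tr}(YY^*)$ coefficient produced by $\tau=1+N^{-1/2}\hat\tau$, and (iv) the perturbation $\log\det(\mathbb{I}_{2n}+\sqrt{\gamma_N}N^{-1/4}\widehat{E}_\alpha)$ isolated in \eqref{logAalpha decompose}. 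Each of these is uniformly $O(N^{-1/4})$ on $\Omega$ once one restricts to compact $\hat Z$, since the $\log t_{j,j}^{(\alpha)}$ terms are integrable singularities on the diagonal positivity face of $\Omega$ and may be handled by a separate trivial bound near the boundary $\{t_{j,j}^{(\alpha)}=0\}$ (the weight $\det(T_\alpha T_\alpha^*)^{c_\alpha N}$ in $\widetilde{f}$ already pushes the integrand to zero there). Consequently $|Nr_N|\leq CN^{3/4}$ uniformly on $\Omega$, which is overwhelmed by any linear-in-$N$ gap at the maximizer.

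The decisive step is the global maximum principle: for every fixed $\delta>0$ there exists $\Delta_0=\Delta_0(\delta)>0$ with
\begin{equation*}
\widetilde{f}(\boldsymbol{T},Y,\boldsymbol{Q})\leq \widetilde{f}(\boldsymbol{T}^{\star},0,0)-\Delta_0 \qquad \text{on } \Omega\setminus A_{N,\delta}.
\end{equation*}
I would reduce $\widetilde{f}$ in two stages — first maximize over $Y$ holding $(\boldsymbol{T},\boldsymbol{Q})$ fixed (the dependence is $\sum_\alpha c_\alpha\log\det(f_\alpha\mathbb{I}_n+YY^*)-\mathrm{Tr}(YY^*)$, whose concavity in the eigenvalues of $YY^*$ pins the unique maximizer to $Y=0$ via $\sum_\alpha c_\alpha/f_\alpha = P_{00}(z_0)=1$), then maximize over $(\boldsymbol{T},\boldsymbol{Q})$ by combining concavity of $\sum_\alpha c_\alpha\log\det(T_\alpha T_\alpha^*)$ with the strict convexity of the quadratic penalty $-\sum_\alpha f_\alpha\mathrm{Tr}(T_\alpha T_\alpha^*)+|z_0|^2\sum_\alpha\mathrm{Tr}(T_\alpha T_\alpha^*)$ forced by the constraint from $\Omega$; the off-diagonal Hadamard-type sum $\sum_{i<j}|(\cdots)_{i,j}|^2$ and the $-\mathrm{Tr}(\cdots)^*Q_{t+1}(\cdots)$ term are nonpositive contributions that vanish only at $\boldsymbol{T}=\boldsymbol{T}^{\star}$, $Q_0=Q_{t+1}=0$. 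This is precisely the content of the two maximum lemmas to be verified in Section~\ref{Sectmax}, and I would invoke them here to supply the gap $\Delta_0$.

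Given the gap, the tail estimate follows mechanically: on $\Omega\setminus A_{N,\delta}$ we have $Nf\leq N\widetilde{f}(\boldsymbol{T}^{\star},0,0)-N\Delta_0+CN^{3/4}\leq N\widetilde{f}(\boldsymbol{T}^{\star},0,0)-N\Delta_0/2$ for large $N$; the prefactor $\det(\widehat{L}_1+\sqrt{\gamma_N}\widehat{L}_2)$ is a polynomial of bounded degree in bounded matrix entries and hence $O(N^K)$ for some fixed $K$; and $\Omega$ has finite volume. Multiplying through and dividing by $e^{N\widetilde{f}(\boldsymbol{T}^{\star},0,0)}$ produces the $O(e^{-N\Delta/2})$ remainder with any $\Delta<\Delta_0$, which is \eqref{RNndelta}. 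The main obstacle, therefore, is the strict concavity/maximum-principle analysis of $\widetilde{f}$ on the semi-algebraic set $\Omega$: the nontrivial interaction between the concave $\log\det$ part and the Hadamard sum in $h_0$ requires a careful block-diagonalization in the $(i,j)$ indices before the critical-point identity $P_{00}(z_0)=1$ can be deployed, and it is precisely this structural argument that is deferred to Section~\ref{Sectmax}.
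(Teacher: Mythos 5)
Your overall strategy coincides with the paper's: evaluate $\widetilde f$ at the maximizer (your value $n\sum_\alpha c_\alpha\log c_\alpha+n(|z_0|^2-1)$ is exactly what turns $C_{N,n}$ into $D_{N,n}$), invoke the two maximum lemmas of Section \ref{Sectmax} to get a gap $\Delta_0$ on $\Omega\setminus A_{N,\delta}$, and finish with a Laplace-type tail bound. The gap lies in the tail bound itself. The domain $\Omega$ in \eqref{integrationregionprop} constrains only $\boldsymbol{T}$ and $\boldsymbol{Q}$; the variable $Y$ ranges over all of $\mathbb{C}^{n\times n}$. Hence your two key assertions fail: (a) $\Omega$ does \emph{not} have finite volume, and $\det\big(\widehat{L}_1+\sqrt{\gamma_N}\widehat{L}_2\big)$ is a polynomial in the \emph{unbounded} entries of $Y$, so it is not $O(N^K)$ uniformly; (b) the discrepancy coming from $\tau=1+N^{-1/2}\hat\tau$ in the coefficient of ${\rm Tr}(YY^*)$ contributes, after multiplication by $N$, a term of size $O(\sqrt N\,{\rm Tr}(YY^*))$, which is not $O(N^{3/4})$ uniformly on $\Omega$. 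Moreover the terms $\sum_{\alpha,j}(R_{\alpha,N}-j)\log t^{(\alpha)}_{j,j}$ and $\sum_\alpha(R_{\alpha,N}-n)\log\det E_\alpha$ are unbounded on $\Omega$; since $R_{\alpha,N}-j$ may be negative, the first can tend to $+\infty$ as $t^{(\alpha)}_{j,j}\to 0$, so it is not an innocuous integrable singularity of the favorable sign.

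Your proposed remedy, to let the weight $\det(T_\alpha T_\alpha^*)^{c_\alpha N}$ inside $\widetilde f$ kill the boundary singularity, conflicts with the preceding step: once you have used the full bound $\widetilde f\le\widetilde f(\boldsymbol{T}^\star,0,0)-\Delta_0$ on the complement, no damping remains to control the log blow-ups, the $\sqrt N\,{\rm Tr}(YY^*)$ error, or the polynomial growth of the prefactor in $Y$. The paper closes exactly this hole by applying the gap only to a fraction $(1-\epsilon_0)$ of $\widetilde f-\widetilde f(\boldsymbol{T}^\star,0,0)$ and retaining the fraction $\epsilon_0$: choosing $\epsilon_0$ small (and $N_0\epsilon_0>\sum_\alpha|n-R_{\alpha,N}|/c_\alpha$), the retained part supplies the factor $\exp\{N_0\epsilon_0(\sum_\alpha c_\alpha\log\det(T_\alpha T_\alpha^*)+\cdots)-\tfrac{N_0\epsilon_0}{2}{\rm Tr}(YY^*)\}$, which dominates the residual log terms, absorbs the $\sqrt N\,{\rm Tr}(YY^*)$ error, and makes the complement integral $O(1)$ (see \eqref{Concentration2}--\eqref{Concentration5}), yielding $e^{-\frac23 N\Delta+O(N^{3/4})}\,O(1)=O(e^{-\frac12 N\Delta})$. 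Without this (or an equivalent) splitting device your ``mechanical'' tail estimate does not go through. A minor further slip: the Hadamard sum $\sum_{i<j}|(\cdot)_{i,j}|^2$ in $h_0$ is nonnegative, not nonpositive, which is precisely why Lemma \ref{maximum lemma} is nontrivial; since you invoke the lemmas rather than prove them, this only mars your heuristic, not the skeleton of the argument.
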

\begin{proof}
By  Lemma \ref{maximumY}  and Lemma  \ref{maximum lemma}  in Section \ref{Sectmax} below,  
we see that $\widetilde{f}(\boldsymbol{T},Y,\boldsymbol{Q})$ in \eqref{f0TY}  attains its maximum  only at  
\begin{equation}\label{maximumpoint}
Y=0, \ Q_0=0, \ Q_{t+1}=0, \quad T_{\alpha}=\sqrt{ c_{\alpha}/f_{\alpha}}\mathbb{I}_n,\quad \alpha=1,\cdots,t.
\end{equation}
So  there exists $\Delta>0$ such that in  the domain $\Omega\cap 
A_{N,\delta}^{\complement}$,
\begin{equation}\label{f0TY minus f0}
\widetilde{f}(\boldsymbol{T},Y,\boldsymbol{Q})-
\widetilde{f}\big(\big(\sqrt{ c_{\alpha}/f_{\alpha}}\mathbb{I}_n\big)_{\alpha},0,0\big)
\leq -2\Delta.
\end{equation}

First, rewrite 
\begin{multline}\label{Concentration0}
N\Big(
 f(\boldsymbol{T},Y,\boldsymbol{Q})-\widetilde{f}
  \big(\big(\sqrt{ c_{\alpha}/f_{\alpha}}\mathbb{I}_n\big)_{\alpha},0,0\big)
\Big)=
\\
N\Big(
f(\boldsymbol{T},Y,\boldsymbol{Q})-
\widetilde{f}(\boldsymbol{T},Y,\boldsymbol{Q})
\Big)+
N\Big(
 \widetilde{f}(\boldsymbol{T},Y,\boldsymbol{Q})-\widetilde{f}
  \big(\big(\sqrt{ c_{\alpha}/f_{\alpha}}\mathbb{I}_n\big)_{\alpha},0,0\big)
\Big)
.
\end{multline}
On the one hand,  noting that  $$ \sum_{\alpha=1}^t
T_{\alpha}T_{\alpha}^*+Q_0Q_0^*+Q_{t+1}Q_{t+1}^*\leq \mathbb{I}_n$$
in the domain   $\Omega$ defined in \eqref{integrationregionprop}, by comparing \eqref{hQrewrite} with \eqref{h0T}
we know that for 
$\hat{Z}$
in a compact subset of $\mathbb{C}^n$,
\begin{equation}\label{error term1}
\frac{1}{\tau} h(\boldsymbol{T},\boldsymbol{Q})-h_0(\boldsymbol{T},\boldsymbol{Q})
=O(N^{-\frac{1}{4}}).
\end{equation}

On the other hand, the edge point $z_0$ satisfies $P_{00}(z_0)=1$. From \eqref{nu} and \eqref{parameter} we have 
$ \sum_{\alpha=1}^t c_{\alpha}/f_{\alpha}=1. $ Hence, we have $f_{\alpha}>0.$ By applying the singular value decomposition \eqref{singularcorre},  for any $Y$, we obtain 
\begin{equation*}
(\sqrt{\gamma_N}f_{\alpha}\mathbb{I}_n+Y^*Y)^{-1}=O(1),
\quad Y(\sqrt{\gamma_N}f_{\alpha}\mathbb{I}_n+Y^*Y)^{-1}=O(1),
\end{equation*}
and 
\begin{equation}\label{error term2}
\log\det\left(
\mathbb{I}_{2n}+\sqrt{\gamma_N} N^{-\frac{1}{4}}
\widehat{E}_{\alpha}
\right)=O(N^{-\frac{1}{4}})
\end{equation}
since $\widehat{E}_{\alpha}=O(1)$. 

Meanwhile,  we have 
\begin{equation*}
\log\det\left(
\gamma_N f_{\alpha}\mathbb{I}_{n}+
YY^*
\right)=\log\det\big(
f_{\alpha}\mathbb{I}_{n}+
YY^*
\big)+ O\big( N^{-1} \big),
\end{equation*}
and from   \eqref{logAalpha decompose}  that 
\begin{equation}\label{logAalpha decompose0}
\log\det\big(
E_{\alpha}
\big)=\log\det\big(
f_{\alpha}\mathbb{I}_{n}+
YY^*
\big)+ O\big( N^{-\frac{1}{4}} \big).
\end{equation}
Combination of  \eqref{fTY}, \eqref{f0TY} and \eqref{error term1} gives rise to 
\begin{multline}\label{Concentration1}
N\Big(
f(\boldsymbol{T},Y,\boldsymbol{Q})-
\widetilde{f}(\boldsymbol{T},Y,\boldsymbol{Q})
\Big)=
\\
\sum_{\alpha=1}^t
\sum_{j=1}^n (R_{\alpha,N}-j)\log\big(
t_{j,j}^{(\alpha)}
\big)+
\sum_{\alpha=1}^t
(R_{\alpha,N}-n)\log\det\big(
E_{\alpha}
\big)
+O\big(
N^{\frac{3}{4}}+\sqrt{N}{\rm Tr}(YY^*)
\big).
\end{multline}
Here, note that in \eqref{f0TY}, from \eqref{Talpha 2} and \eqref{Tdalpha}, we have $ \log\det(T_{\alpha}T_{\alpha}^*)=\sum_{j=1}^n \log\big(
t_{j,j}^{(\alpha)}
\big). $

Further, there exists a positive constant $C_0$ such that
\begin{multline}\label{Concentration2}
\widetilde{f}(\boldsymbol{T},Y,\boldsymbol{Q})-\widetilde{f}
  \big(\big(\sqrt{ c_{\alpha}/f_{\alpha}}\mathbb{I}_n\big)_{\alpha},0,0\big)
 \\ \leq
  C_0-{\rm Tr}(YY^*)
  +\sum_{\alpha=1}^t \big(
c_{\alpha}\log\det(T_{\alpha}T_{\alpha}^*) 
+c_{\alpha}\log\det\big(
f_{\alpha}\mathbb{I}_n+YY^*
\big)
\big) .
\end{multline}
Elementary analysis shows that there exists a positive constant $C_1$ such that
\begin{equation}\label{Concentration3}
-\frac{1}{2}{\rm Tr}(YY^*)
  +\sum_{\alpha=1}^t c_{\alpha}\log\det
  (f_{\alpha}\mathbb{I}_n+YY^*)\leq
  C_1.
\end{equation}

Now take a positive constant $\epsilon_0$ such that
 \begin{equation}\label{ConcentrationCondition}
0<\epsilon_0<\min\Big\{
\frac{\Delta}{\Delta+C_0+C_1},\frac{1}{3}
\Big\}.
\end{equation}
Further,  take a fixed positive integer $N_0$ such that $N_0\epsilon_0>\sum_{\alpha=1}^t |n-R_{\alpha,N}|/c_{\alpha}$. 
Combining \eqref{f0TY minus f0}, \eqref{Concentration2} and \eqref{Concentration3}, for sufficiently large $N$ we have
\begin{multline}\label{Concentration4}
N\Big(
 \widetilde{f}(\boldsymbol{T},Y,\boldsymbol{Q})-\widetilde{f}
  \big(\big(\sqrt{ c_{\alpha}/f_{\alpha}}\mathbb{I}_n\big)_{\alpha},0,0\big)
\Big)+O\big(
\sqrt{N}{\rm Tr}(YY^*)
\big)
\leq 
-2N(1-\epsilon_0)\Delta+N\epsilon_0C_0
\\
+
N\epsilon_0\Big(
-\frac{1}{2}{\rm Tr}(YY^*)
  +\sum_{\alpha=1}^t\big(
c_{\alpha}\log\det(T_{\alpha}T_{\alpha}^*) 
+c_{\alpha}\log\det\big(
f_{\alpha}\mathbb{I}_n+YY^*
\big)
\big)
\Big)
\\
\leq -\frac{2N \Delta}{3}
+N_0\epsilon_0\Big(
-\frac{1}{2}{\rm Tr}(YY^*)
  +\sum_{\alpha=1}^t\big(
c_{\alpha}\log\det(T_{\alpha}T_{\alpha}^*) 
+c_{\alpha}\log\det\big(
f_{\alpha}\mathbb{I}_n+YY^*
\big)
\big)
\Big).
\end{multline}

From \eqref{L1hat} and \eqref{L0hat}, we know that the determinant $ \det\Big(
\widehat{L}_1+\sqrt{\gamma_N}\widehat{L}_2
\Big) $ is a polynomial of the matrix variables with finite degree.
Hence, we have
\begin{multline}\label{Concentration5}
\int_{A_{N,\delta}^{\complement}\cap \Omega}
\Big| \det\Big(
\widehat{L}_1+\sqrt{\gamma_N}\widehat{L}_2
\Big) \Big|\exp\Big\{ N_0\epsilon_0\Big(
\sum_{\alpha=1}^t\big(
c_{\alpha}\log\det(T_{\alpha}T_{\alpha}^*) 
+c_{\alpha}\log\det\big(
f_{\alpha}\mathbb{I}_n+YY^*
\big)
\big)
\\
-\frac{1}{2}{\rm Tr}(YY^*)\Big)
+\sum_{\alpha=1}^t
\sum_{j=1}^n (R_{\alpha,N}-j)\log\big(
t_{j,j}^{(\alpha)}
\big)+
\sum_{\alpha=1}^t
(R_{\alpha,N}-n)\log\det\big(
E_{\alpha}
\big) \Big\}
{\rm d}V
=O(1).
\end{multline}
Here, by recalling \eqref{f0TY} and \eqref{integrationregionprop}, which are the definitions of $\widetilde{f}(\boldsymbol{T},Y,\boldsymbol{Q})$ and $\Omega$ respectively, we know that the matrix variable $Y$ is controlled by the factor $\exp(-\frac{N_0\epsilon_0}{2}{\rm Tr}(YY^*))$, and the remaining variables $\boldsymbol{T}$ and $\boldsymbol{Q}$ are restricted in a compact set. Hence, the integrability is guaranteed.

Finally, combining \eqref{Concentration0}, \eqref{Concentration1}, \eqref{Concentration4} and \eqref{Concentration5}  we have
\begin{multline*}
\left|
\int_{A_{N,\delta}^{\complement}\cap \Omega}
\det\Big(
\widehat{L}_1+\sqrt{\gamma_N}\widehat{L}_2
\Big) \exp\left\{ N\Big(
 f(\boldsymbol{T},Y,\boldsymbol{Q})-\widetilde{f}
  \big(\big(\sqrt{ c_{\alpha}/f_{\alpha}}\mathbb{I}_n\big)_{\alpha},0,0\big)
\Big) \right\}
{\rm d}V
\right|   \\
\leq e^{-\frac{2}{3}N\Delta+O\big(N^{\frac{3}{4}}\big)}
 O(1)    =O\big(
e^{-\frac{1}{2}N\Delta}
\big).
\end{multline*}
This  thus completes the proof.
\end{proof}

%%%%%%%%%%%%%%%%%%%%%%%%%%%%%%%%%%%%%%%%%%%%%%%%%%%%%%%%%%%%%%%%%%%%%%%%%%%55
%%%%%%%%%%%%%%%%%%%%%%%%%%%%%%%%%%%%%%%%%%%%%%%%%%%%%%%%%%%%%%%%%%%%%%%%%%%%%%%%%%%%%%%%%%%%%%%%%%%%%%%%%5

%$$\int \cdots \int $$

%%%%%%%%%%%%%%%%%%%%%%%%%%%%%%%%%%%%%%%%%%%%%%%%%%%%%%%%%%%%%%%%%%%%%%%%%%%%%%%%%%%%%%%%%%%%%%%%%%%%%%%%%%%%%%%%%%%%%%%%%%%%%%%%%%%%%%%%%%%%%%%%%%%%%%%%%%%%%%%%%%%%%%%%%%%%%%%%%%%%%%%%%%%%%%%%%%%%%%%%%%%%%%%%%%%%%%%%%%%%%%
%极值引理
\subsection{Maximum  lemmas} \label{Sectmax}
Given   complex numbers $z_0$,       $a_{\alpha}$  and   
$c_{\alpha}>0 $    ($\alpha=1,\cdots,t$),     assume that 
\begin{equation} \sum_{\alpha=1}^t c_{\alpha}=1, \quad  \sum_{\alpha=1}^t \frac{c_{\alpha}}{|z_0-a_{\alpha}|^2}\geq \frac{1}{\tau}. \end{equation}
Let $t_0$ be  the unique   non-negative  solution of the equation   \begin{equation}
\sum_{\alpha=1}^t \frac{c_{\alpha}}{|z_0-a_{\alpha}|^2+t_0}=\frac{1}{\tau}.
\end{equation}

The following two lemmas hold and play a  significant  role in investigating local eigenvalue statistics of the deformed GinUEs. 
\begin{lemma}\label{maximumY}
As a function of a non-negative definite matrix $H$,
\begin{equation}\label{Hpart}
 \sum_{\alpha=1}^t c_{\alpha}
\log\det\big(
|z_0-a_{\alpha}|^2 \mathbb{I}_n+H
\big)   -
\frac{1}{\tau}  {\rm Tr}(H)
\end{equation}
attains it maximum only at  $H=t_0 I_n$. \end{lemma}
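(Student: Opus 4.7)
The plan is to reduce the matrix optimization to a one-variable scalar optimization by exploiting unitary invariance, and then use elementary calculus.

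First, observe that for any $f_{\alpha}>0$ (which holds since the points $z_0, a_1, \ldots, a_t$ are distinct by the assumption \eqref{A0 form}), both $\log\det(f_{\alpha}\mathbb{I}_n + H)$ and $\mathrm{Tr}(H)$ are unitarily invariant. Diagonalizing $H = U\Lambda U^{*}$ with $\Lambda = \mathrm{diag}(\lambda_1, \ldots, \lambda_n)$ and $\lambda_i \geq 0$, the function in \eqref{Hpart} separates as
\begin{equation*}
\sum_{\alpha=1}^t c_{\alpha}\log\det(f_{\alpha}\mathbb{I}_n + H) - \frac{1}{\tau}\mathrm{Tr}(H) = \sum_{i=1}^n g(\lambda_i),
\end{equation*}
where the scalar function $g:[0,\infty)\to\mathbb{R}$ is defined by
\begin{equation*}
g(\lambda) := \sum_{\alpha=1}^t c_{\alpha}\log(f_{\alpha}+\lambda) - \frac{\lambda}{\tau}.
\end{equation*}

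Next, I would analyze $g$ via the first-derivative test. Its derivative
\begin{equation*}
g'(\lambda) = \sum_{\alpha=1}^t \frac{c_{\alpha}}{f_{\alpha}+\lambda} - \frac{1}{\tau}
\end{equation*}
is strictly decreasing on $[0,\infty)$, since $g''(\lambda) = -\sum_{\alpha} c_{\alpha}/(f_{\alpha}+\lambda)^2 < 0$. By the standing hypothesis, $g'(0) = \sum_{\alpha} c_{\alpha}/f_{\alpha} - 1/\tau \geq 0$, while $g'(\lambda) \to -1/\tau < 0$ as $\lambda \to \infty$. Hence $g'$ has a unique zero on $[0,\infty)$, which is precisely the parameter $t_0$ defined in the statement. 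Strict concavity of $g$ then guarantees that $t_0$ is the unique global maximizer of $g$ on $[0,\infty)$.

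Finally, combining these two steps,
\begin{equation*}
\sum_{i=1}^n g(\lambda_i) \leq n\, g(t_0),
\end{equation*}
with equality if and only if $\lambda_i = t_0$ for every $i$, that is, $H = t_0\mathbb{I}_n$. Translating back to $H$, this shows that the matrix-valued functional \eqref{Hpart} attains its maximum only at $H = t_0\mathbb{I}_n$, as claimed. I do not expect any genuine obstacle here; the only point requiring mild care is that $g'(0) \geq 0$ (guaranteeing $t_0 \geq 0$ lies in the feasible region $H \geq 0$), and this is exactly the content of the hypothesis $\sum_{\alpha} c_{\alpha}/f_{\alpha} \geq 1/\tau$.
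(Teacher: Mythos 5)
Your proposal is correct and follows essentially the same route as the paper: the paper reduces to the $n=1$ (scalar) case — which is exactly your eigenvalue separation via unitary invariance — and then applies the first/second derivative test, using $\sum_{\alpha}c_{\alpha}/f_{\alpha}\geq 1/\tau$ to place the unique critical point $t_0$ in $[0,\infty)$. You have simply spelled out the diagonalization and concavity details that the paper leaves implicit.
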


\begin{proof}
Obviously, it's sufficient to check   the $n=1$ case. For this, take the first and second derivatives,  find the saddle point and we can  easily complete the proof.   
\end{proof}

\begin{lemma}\label{maximum lemma}
Assume that  $z_0$ is not  an eigenvalue of   the   $n\times n$   complex normal   matrix  $D$,  let 
\begin{equation}\label{Jn lemma}
\begin{aligned}
J_n&=\sum_{\alpha=1}^t\Big(
\tau c_{\alpha}\log\det\big(
T_{\alpha}T_{\alpha}^*
\big)-|z_0-a_{\alpha}|^2{\rm Tr}\big(
T_{\alpha}T_{\alpha}^*
\big)
\Big)
+|z_0|^2 {\rm Tr}\Big(
\sum_{\alpha=1}^t T_{\alpha}T_{\alpha}^*
+AA^*+BB^*
\Big)
\\&-{\rm Tr}\Big(
B\big(
z_0\mathbb{I}_{l_2}-D
\big)^*\big(
z_0\mathbb{I}_{l_2}-D
\big)B^*
\Big)
+\sum_{1\leq i<j\leq n}\left|
\Big(
\sum_{\alpha=1}^t a_{\alpha}T_{\alpha}T_{\alpha}^*
+z_0AA^*+BDB^*
\Big)_{i,j}
\right|^2,
\end{aligned}
\end{equation}
where  as matrix-valued  variables
$T_{\alpha}$'s  are  $n\times n$  upper-triangular matrices  with  positive diagonal elements,   
$A$ and $B$  are   $n\times l_1$ and  $n\times l_2$  matrices respectively. Then %\begin{equation}\label{Jn inequlity}
%J_n\leq n\sum_{\alpha=1}^t c_{\alpha}\Big(
%\log\frac{c_{\alpha}}{|z_0-a_{\alpha}|^2+t_0}
%-\frac{|z_0-a_{\alpha}|^2}{|z_0-a_{\alpha}|^2+t_0}
%\Big)+n|z_0|^2,
%\end{equation} 
\begin{equation}\label{Jn inequlity}
J_n\leq n\sum_{\alpha=1}^t \tau c_{\alpha}
\log\frac{\tau c_{\alpha}}{|z_0-a_{\alpha}|^2+t_0}
+n(t_0+|z_0|^2-\tau),
\end{equation} 
whenever  
\begin{equation}\label{lemma condition}
\sum_{\alpha=1}^t T_{\alpha}T_{\alpha}^*+AA^*+BB^*
\leq \mathbb{I}_n,.
\end{equation}
Moreover, the  equality holds  if and only if 
\begin{equation} 
A=0,\quad
B=0, \quad T_{\alpha}=\sqrt{\frac{\tau c_{\alpha}}{|z_0-a_{\alpha}|^2+t_0}}
\mathbb{I}_n, \quad \alpha=1,  \ldots, t.
\end{equation} 
\end{lemma}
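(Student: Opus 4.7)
The plan is to change variables from the upper-triangular $T_\alpha$ (with positive diagonal) to the positive-definite matrices $S_\alpha:=T_\alpha T_\alpha^*$, under which the map is a bijection (via the Cholesky-type factorization) and the integrand in $J_n$ depends only on the $S_\alpha$; the constraint becomes $\sum_\alpha S_\alpha+AA^*+BB^*\le I_n$ and the problem turns into a concave-plus-convex optimization in $(S_1,\ldots,S_t,A,B)$. I will then identify the optimum via Lagrange / KKT reasoning and verify by direct substitution that it gives the stated right-hand side.

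Concretely, the candidate
\[
S_\alpha^{\ast}=\frac{\tau c_\alpha}{|z_0-a_\alpha|^2+t_0}\,I_n,\quad A=0,\quad B=0,\qquad \Lambda=(|z_0|^2+t_0)I_n
\]
satisfies the KKT conditions for the matrix inequality constraint: primal feasibility $\sum_\alpha S_\alpha^{\ast}=I_n$ reduces to the defining equation of $t_0$; stationarity in $S_\alpha$ is $\tau c_\alpha(S_\alpha^{\ast})^{-1}=(|z_0-a_\alpha|^2+t_0)I_n$; the gradients in $A$ and $B$ vanish at zero, with second variations $|z_0|^2 I-\Lambda=-t_0\,I$ and $|z_0|^2 I-(z_0I-D)^{*}(z_0I-D)-\Lambda=-\bigl(t_0\,I+(z_0I-D)^{*}(z_0I-D)\bigr)$, both negative semidefinite (the latter strictly, using $z_0\notin\mathrm{spec}(D)$). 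A direct substitution then verifies that $J_n(S^{\ast},0,0)$ equals the RHS of \eqref{Jn inequlity}.

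The main obstacle is upgrading this KKT identity into a \emph{global} upper bound, because the term $\sum_{i<j}|M_{ij}|^2$ -- with $M=\sum_\alpha a_\alpha S_\alpha+z_0 AA^*+BDB^{*}=W\Lambda_0 W^{*}$ for $W=[T_1,\ldots,T_t,A,B]$ and $\Lambda_0=\mathrm{diag}(a_1 I,\ldots,a_t I,z_0 I,D)$ -- is convex in $(S,A,B)$ and so destroys overall concavity. My plan is to combine strict concavity of $\log\det$ on positive-definite matrices with the Cauchy--Schwarz entrywise bound $|M_{ij}|\le\|\Lambda_0\|\sqrt{(WW^{*})_{ii}(WW^{*})_{jj}}$ (a consequence of $WW^{*}\le I_n$) to absorb $\sum_{i<j}|M_{ij}|^2$ into the concavity deficit of the log-det terms around the scalar matrices $S_\alpha^{\ast}$; the resulting inequality is saturated precisely when $A=B=0$, when each $S_\alpha$ is a scalar matrix (so all $t_{ij}^{(\alpha)}$ with $i<j$ vanish and $M$ becomes scalar, zeroing $\sum|M_{ij}|^2$), and when the scalar multiples match the KKT equation $\tau c_\alpha(S_\alpha)^{-1}=(|z_0-a_\alpha|^2+t_0)I_n$, yielding $T_\alpha=\sqrt{\tau c_\alpha/(|z_0-a_\alpha|^2+t_0)}\,I_n$ as claimed.
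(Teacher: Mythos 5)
Your KKT verification at the candidate point is fine, and you correctly isolate the real difficulty: the term $\sum_{i<j}|M_{i,j}|^2$ is convex in the variables, so stationarity alone gives no global bound. But the step you propose to close this gap --- the entrywise bound $|M_{i,j}|\le\|\Lambda_0\|\sqrt{(WW^*)_{ii}(WW^*)_{jj}}$ absorbed into the ``concavity deficit'' of the $\log\det$ terms --- cannot work as stated. At the claimed maximizer one has $WW^*=\mathbb{I}_n$, so your bound gives $|M_{i,j}|^2\le\|\Lambda_0\|^2$ there rather than $0$: a bound that stays of order one at the maximizer cannot be absorbed into a deficit that vanishes (to second order) at that point. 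Even a refined version, estimating $M_{i,j}$ by off-diagonal parts of the $S_\alpha$, $AA^*$, $BDB^*$, produces constants of size $|a_\alpha|^2$, $|z_0|^2$, $\|D\|^2$, which bear no relation to the deficit coefficients $\sim f_\alpha^2/(\tau c_\alpha)$, so there is no general domination. Tellingly, your global step makes no use of the normality of $D$ (you invoke it only for strict negativity of a second variation in $B$), whereas the true mechanism controlling the convex term is an exact cancellation that requires it.

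The paper's proof is structured around precisely this cancellation: it proceeds by induction on $n$, splitting off the first row of each $T_\alpha$ (and of $A$, $B$) so that $J_n=J_{n-1}+\widetilde J$; after the substitution $T_{1,2}=\widetilde T_{1,2}F^{1/2}$ with $F=\mathbb{I}_l-\widetilde T^*\widetilde T$, the dangerous quadratic contribution enters $\widetilde J$ only through $-\widetilde T_{1,2}H\widetilde T_{1,2}^*$ with
\begin{equation*}
H=|z_0|^2\widetilde T^*\widetilde T+F^{\frac12}\big(z_0\mathbb{I}_l-L\big)^*\big(z_0\mathbb{I}_l-L\big)F^{\frac12}-F^{\frac12}L\widetilde T^*\widetilde T L^*F^{\frac12}
=\big(z_0\mathbb{I}_l-F^{\frac12}LF^{\frac12}\big)\big(z_0\mathbb{I}_l-F^{\frac12}LF^{\frac12}\big)^*\ge 0,
\end{equation*}
the last identity holding precisely because $L$ (hence $D$) is normal. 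This shows the convex term is dominated exactly, not merely up to constants, and reduces the problem to the scalar inequality used in the base case (together with a separate treatment of the degenerate case where $\mathbb{I}_{n-1}-\widetilde T\widetilde T^*$ is singular). To salvage your route you would need a comparable exact identity or Schur-complement argument reproducing this cancellation; as written, the proposal has a genuine gap at its central step.
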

\begin{proof}
We  proceed  by  induction on $n$.  Without loss of generality, choose $\tau=1$.  For simplicity,   let  
 $
f_{\alpha}=|z_0-a_{\alpha}|^2
$ as in  \eqref{falpha}.

{\bf (1) Base Step.}  When $n=1$,  rewrite $T_{\alpha}=\sqrt{t_{\alpha}}$, then
%$$ \sum_{\alpha=1}^t t_{\alpha}+AA^*+BB^*\leq 1 $$
\begin{small}
\begin{equation*}
\begin{aligned}
J_1=\sum_{\alpha=1}^t\big(c_{\alpha}\log(t_{\alpha})
-f_{\alpha}t_{\alpha}
\big)
+|z_0|^2\big(
\sum_{\alpha=1}^t t_{\alpha}+AA^*+BB^*
\big)-B\big(
z_0\mathbb{I}_{l_2}-D
\big)^*\big(
z_0\mathbb{I}_{l_2}-D
\big)B^*.
\end{aligned}
\end{equation*}
\end{small}

Note that the simple  inequality  
\begin{equation} \label{logine}
c_{\alpha}\log(t_{\alpha}) - f_{\alpha} t_{\alpha}  \leq  t_0 t_{\alpha} -c_{\alpha}+c_{\alpha}
\log\frac{c_{\alpha}}{f_{\alpha}+t_0}, 
\end{equation}
with   the  equality    if and only if $t_{\alpha}=c_{\alpha}/(f_{\alpha}+t_0)$, 
we have 
\begin{equation*}
\begin{aligned}
J_1\leq (t_0+|z_0|^2)
\sum_{\alpha=1}^t t_{\alpha} +
|z_0|^2\big(
AA^*+BB^*
\big)
-B\big(
z_0\mathbb{I}_{l_2}-D
\big)^*\big(
z_0\mathbb{I}_{l_2}-D
\big)B^* -1+\sum_{\alpha=1}^t  c_{\alpha}
\log\frac{c_{\alpha}}{f_{\alpha}+t_0}.
\end{aligned}
\end{equation*}
 Together with the constraint $\sum_{\alpha=1}^t t_{\alpha}+AA^*+BB^*\leq 1$, 
we further obtain
\begin{equation*}
\begin{aligned}
J_1\leq  t_0+|z_0|^2 -1+\sum_{\alpha=1}^t  c_{\alpha}
\log\frac{c_{\alpha}}{f_{\alpha}+t_0}.
\end{aligned}
\end{equation*}
with equality if and only if $t_{\alpha}=c_{\alpha}/(f_{\alpha}+t_0)$,
  $A=0$ and $B=0$. So  this completes  the $n=1$ case.

{\bf (2) Inductive Step.}  Assuming that the desired result  holds true for the $n-1$ case,  we  will  prove it for  the $n$ case. For this,   rewrite the relevant matrices in  $2\times 2$ block matrix form  \begin{equation}\label{prooflemma2}
T_{\alpha}=\begin{bmatrix}
\sqrt{t_{1,1}^{(\alpha)}} & T_{1,2}^{(\alpha)}
\\
& T_{2,2}^{(\alpha)}
\end{bmatrix},\quad
A=\begin{bmatrix}
A_1 \\ A_2
\end{bmatrix},\quad
B=\begin{bmatrix}
B_1 \\ B_2
\end{bmatrix},
\end{equation} 
  where $T_{1,2}^{(\alpha)}$, $A_1$ and $B_1$ are  of size $1\times (n-1)$, $1\times l_1$ and $1\times l_2$  respectively. 
Hence $J_n$ in \eqref{Jn lemma} can be divided in two parts  
\begin{equation}\label{prooflemma5}
J_n=J_{n-1}+\widetilde{J},
\end{equation} 
where $J_{n-1}$ has the same form as $J_{n}$  but in  terms of  $\{ T_{2,2}^{(\alpha)}\},A_2,B_2$  and
\begin{small}
\begin{equation}\label{prooflemma6}
\begin{aligned}
&\widetilde{J}=\sum_{\alpha=1}^t\Big(
c_{\alpha}\log\big(
t_{1,1}^{(\alpha)}
\big)-f_{\alpha}\big(
t_{1,1}^{(\alpha)}+T_{1,2}^{(\alpha)}{T_{1,2}^{(\alpha)}}^*
\big)
\Big)
\\
&+|z_0|^2 \Big(
\sum_{\alpha=1}^t \big(
t_{1,1}^{(\alpha)}+T_{1,2}^{(\alpha)}{T_{1,2}^{(\alpha)}}^*
\big)
+A_1A_1^*+B_1B_1^*
\Big)-
B_1\big(
z_0\mathbb{I}_{l_2}-D
\big)^*\big(
z_0\mathbb{I}_{l_2}-D
\big)B_1^*
\\
&+\Big(
\sum_{\alpha=1}^t a_{\alpha}
T_{1,2}^{(\alpha)}{T_{2,2}^{(\alpha)}}^*
+z_0A_1A_2^*+B_1DB_2^*
\Big)\Big(
\sum_{\alpha=1}^t a_{\alpha}
T_{1,2}^{(\alpha)}{T_{2,2}^{(\alpha)}}^*
+z_0A_1A_2^*+B_1DB_2^*
\Big)^*.
\end{aligned}
\end{equation}
\end{small}

By the   induction hypotheses
we have 
\begin{equation}\label{prooflemma7}
J_{n-1}\leq (n-1)\sum_{\alpha=1}^t c_{\alpha}
\log\frac{c_{\alpha}}{f_{\alpha}+t_0}+(n-1)(t_0+|z_0|^2-1),
\end{equation} 
with equality if and only if 
$$
A_2=0,\quad
B_2=0, \quad T_{2,2}^{(\alpha)}=\sqrt{\frac{c_{\alpha}}{f_{\alpha}+t_0}}
\mathbb{I}_{n-1},\quad \alpha=1,\cdots,t
.
$$
So the remaining task is to prove that $\widetilde{J}$  attains its maximum  only  when 
$$ A_1=0,\quad B_1=0, \quad T_{1,2}^{(\alpha)}=0, \quad 
t_{1,1}^{(\alpha)}=\frac{c_{\alpha}}{f_{\alpha}+t_0},\quad
\alpha=1,\cdots,t.
$$

%$\widetilde{J}$  attains its maximum. We see $\widetilde{J}$ as function of $t_{1,1}^{(\alpha)}$ for $\alpha=1,\cdots,t$ only, then 
%$$
%\frac{\partial \widetilde{J}}{\partial t_{1,1}^{(\alpha)}}
%=\frac{c_{\alpha}}{t_{1,1}^{(\alpha)}}-f_{\alpha}+|z_0|^2.
%$$
%If for $\alpha=1,\cdots,t$, $f_{\alpha}>|z_0|^2$, then
%$$
%\sum_{\alpha=1}^t 
%\frac{c_{\alpha}}{f_{\alpha}-|z_0|^2}\geq 1,
%$$
%also $\widetilde{J}\big|_{t_{1,1}^{(\alpha)}=0}=-\infty.$ So only if
%\begin{equation}\label{prooflemma8}
%S_{1,1}-1-S_{1,2}
%(S_{2,2}-\mathbb{I}_{n-1})^{-1}
%S_{1,2}^*=0,
%\end{equation} 
%$\widetilde{J}$ can attains its maximum.

Now set
\begin{equation}\label{prooflemma9}
T_{1,2}=\big(
T_{1,2}^{(1)},\cdots,T_{1,2}^{(t)},A_1,B_1
\big),\quad
\widetilde{T}=\big(
T_{2,2}^{(1)},\cdots,T_{2,2}^{(t)},A_2,B_2
\big),
\end{equation} 
then $\widetilde{J}$ in \eqref{prooflemma6} reduces to  
\begin{equation}\label{prooflemma10}
\begin{aligned}
\widetilde{J}&=\sum_{\alpha=1}^t\Big(
c_{\alpha}\log\big(
t_{1,1}^{(\alpha)}
\big)-f_{\alpha}
t_{1,1}^{(\alpha)}
\Big)
+|z_0|^2 \Big(
\sum_{\alpha=1}^t 
t_{1,1}^{(\alpha)}+T_{1,2}{T_{1,2}}^*
\Big)
\\&-
T_{1,2}\big(
z_0\mathbb{I}_{l}-L
\big)^*\big(
z_0\mathbb{I}_{l}-L
\big)T_{1,2}^*
+
T_{1,2}L\widetilde{T}^*
\widetilde{T}
L^*T_{1,2}^*,
\end{aligned}
\end{equation} 
where 
\begin{equation}\label{prooflemma11}
L=\mbox{diag}(a_1\mathbb{I}_{n-1},\ldots, a_t\mathbb{I}_{n-1}, z_0\mathbb{I}_{l_1},D)
%\begin{bmatrix}
%a_1\mathbb{I}_{n-1} &&&&\\
%& \ddots &&& \\
%&& a_t\mathbb{I}_{n-1} && \\
%&&& z_0\mathbb{I}_{l_1} & \\
%&&&& D
%\end{bmatrix}
\end{equation} 
has  size $l\times l$ with $l=t(n-1)+l_1+l_2$.  On the other hand,  the constraint condition in \eqref{lemma condition} reduces to  
 \begin{equation}  \label{pdef}
 \begin{bmatrix}
1- \sum_{\alpha=1}^t t_{1,1}^{(\alpha)}
-T_{1,2}^{} T_{1,2}^*  & -T_{1,2} \widetilde{T}^*
\\
-\widetilde{T} T_{1,2}^* &\mathbb{I}_{n-1}- \widetilde{T} \widetilde{T}^*
\end{bmatrix}\geq 0,
\end{equation} 
which is further equivalent to 

 \begin{equation}  \label{cc2}
 \mathbb{I}_{n-1}- \widetilde{T} \widetilde{T}^*
 \geq  0, \quad 
1- \sum_{\alpha=1}^t t_{1,1}^{(\alpha)}
-T_{1,2} \big((\mathbb{I}_{n-1}+\widetilde{T}^*(\mathbb{I}_{n-1}- \widetilde{T} \widetilde{T}^*)^{-1}\widetilde{T} \big)T_{1,2}^*    \geq 0,
\end{equation} 
if $\mathbb{I}_{n-1}- \widetilde{T} \widetilde{T}^*
 > 0$.
 
 When $\mathbb{I}_{n-1}- \widetilde{T} \widetilde{T}^*
 > 0$, set  \begin{equation}\label{prooflemma14}
F=\mathbb{I}_l-\widetilde{T}^*\widetilde{T},
\end{equation}
 it's easy to see that 
\begin{equation}\label{prooflemma13}
F^{-1}=\mathbb{I}_l+\widetilde{T}^*
\big(\mathbb{I}_{n-1}-\widetilde{T}\widetilde{T}^*\big)^{-1}
\widetilde{T}.
\end{equation}  
  Also set $T_{1,2}=\widetilde{T}_{1,2}F^{\frac{1}{2}}$, then the second condition in   \eqref{cc2}  becomes    
\begin{equation}\label{prooflemma15}
\sum_{\alpha=1}^t t_{1,1}^{(\alpha)}
+\widetilde{T}_{1,2}^{}\widetilde{T}_{1,2}^*\leq 1.
\end{equation}
Therefore,  $\widetilde{J}$ can be rewritten as 
\begin{equation}\label{prooflemma17}
\begin{aligned}
\widetilde{J}&=\sum_{\alpha=1}^t\Big(
c_{\alpha}\log\big(
t_{1,1}^{(\alpha)}
\big)-f_{\alpha}
t_{1,1}^{(\alpha)}
\Big)
+|z_0|^2\big(\sum_{\alpha=1}^t t_{1,1}^{(\alpha)}
+\widetilde{T}_{1,2}^{}\widetilde{T}_{1,2}^*\big)-
 \widetilde{T}_{1,2}^{}H\widetilde{T}_{1,2}^*,
\end{aligned}
\end{equation} 
where
\begin{equation}\label{prooflemma18}
H:=|z_0|^2\widetilde{T}^*\widetilde{T}+F^{\frac{1}{2}}
\big( z_0\mathbb{I}_l-L \big)^*\big( z_0\mathbb{I}_l-L \big)
F^{\frac{1}{2}}-F^{\frac{1}{2}}
L\widetilde{T}^*\widetilde{T}L^*
F^{\frac{1}{2}}.
\end{equation}
Since   $L^*L=LL^*$,   by \eqref{prooflemma14}  a direct calculation shows that 
\begin{equation*}
H 
=\big( z_0\mathbb{I}_l-F^{\frac{1}{2}}LF^{\frac{1}{2}} \big)
\big( z_0\mathbb{I}_l-F^{\frac{1}{2}}LF^{\frac{1}{2}} \big)^*
\geq 0.
\end{equation*}
Together with \eqref{prooflemma15} and \eqref{logine},  we  obtain 
\begin{equation} \widetilde{J} \leq   t_0+|z_0|^2 -1+\sum_{\alpha=1}^t  c_{\alpha}
\log\frac{c_{\alpha}}{f_{\alpha}+t_0}, \end{equation} 
from which 
using of  \eqref{prooflemma7}  gives rise to  
\begin{equation} \label{Jnless}
J_n<n\sum_{\alpha=1}^t c_{\alpha}
\log\frac{c_{\alpha}}{|z_0-a_{\alpha}|^2+t_0}
+n(t_0+|z_0|^2-1),
\end{equation} 
Here  the equality can not be taken since so does not  for  ${J_{n-1}}$  in the case that $\mathbb{I}_{n-1}- \widetilde{T} \widetilde{T}^*> 0$.

 When $\mathbb{I}_{n-1}- \widetilde{T} \widetilde{T}^*$ is just a  nonnegative but not positive  definite  matrix,  without loss of generality we assume that 
$$\mathbb{I}_{n-1}- \widetilde{T} \widetilde{T}^*= 
\mbox{diag}(P,  0_{l_0}), \quad l_0\geq 1,
$$
where  $P$ is a positive definite  matrix and  $0_{l_0}$ is a zero matrix of size $l_0$.  Then  the nonnegative definiteness in  \eqref{pdef}  implies that  all the last $l_0$ components of the  vector $T_{1,2} \widetilde{T}^*$ are zeros.  We consider  the matrix $P$ and  further proceed as like   $\mathbb{I}_{n-1}- \widetilde{T} \widetilde{T}^*>0$ to prove  that \\
 (i) \eqref{Jnless}  holds when $n-1>l_0\geq 1$;  (ii) \eqref{Jn inequlity}  holds and the equality  can be taken as required  when $l_0=n-1$.    
 
In summary, the desired result  immediately follows after { \bf  Base }  and { \bf   Inductive Steps}. 
\end{proof}

%%%%%%%%%%%%%%%%%%%%%%%%%%%%%%%%%%%%%%%%%%%%%%%%%%%%%%%%%%%%%%%%%%%%%%%%%%%%%%%%%%%%%%%%%%%%%%%%%%%%%%%%%%%%%%%%%%%%%%%%%%%%%%%%%%%%%%%%%%%%%%%%%%%%%%%%%%%%%%%%%%%%%%%%%%%%%%%%%%%%%%%%%%%%%%%%%%%%%%%%%%%%%%%%%%%%%%%%%%%%%%
%%%%%%%%%%%%%%%%%%%%%%%%%%%%%%%%%%%%%%%%%%%%%%%%%%%%%%%%%%%%%%%%%%%%%%%%%%%%%5

\section{Proof of Theorem \ref{2-complex-correlation critical2}} \label{sect3i}
The goal in this section is to complete the  proof of Theorem \ref{2-complex-correlation critical2}. 
Choose $\tau=1+N^{-\frac{1}{2}}\hat{\tau}$, then 
\begin{equation}\label{tau inverse expansion}
\frac{1}{\tau}
=1-\hat{\tau}N^{-\frac{1}{2}}+\hat{\tau}^2N^{-1}+
O\big( N^{-\frac{3}{2}} \big).
\end{equation} 

As mentioned in Section \ref{sec1.3},  the  primary  task  of this section  is to analyze  the matrix integral  $I_{N,\delta}$ in  Proposition \ref{RNn delta}.      To do   this, 
  we   need to  make  a series of changes of matrix variables
and  transform  the integration region    \eqref{integrationregionprop} to a  relative simple form with a few variables. 
   The most relevant variables  are  triangular matrices  $\{T_{\alpha}\}$, equivalently,    
    diagonal matrices  $\{T_{{\rm d},\alpha}\}$ and   strictly upper triangular  matrices  $\{T_{{\rm u},\alpha}\}$ as in \eqref{Talpha 2}.   We   proceed in    three subsections to  do  Taylor expansions  for    $f(\boldsymbol{T},Y,\boldsymbol{Q})$ in  \eqref{fTY}   in seven  steps  and for  $ \det\big(\widehat{L}_1+\sqrt{\gamma_N}\widehat{L}_2\big)$,  and then give   a final  summary.  
%  
%  
%   {\bf  Notation.} %$$\gamma_{N}=\frac{N}{N-n}.$$
%%$f_{\alpha}=|z_0-a_{\alpha}|^2$  in   \eqref{falpha}, 
%\begin{equation*}
%\| G \| :=\sum_{\alpha=1}^t
% \| 
%G_\alpha\|,
%\end{equation*}
%and 
%\begin{equation*} \| 
%  \widetilde{T}_{{\rm d}}\|_{2} :=\sum_{\alpha=2}^t
% \| 
%  \widetilde{T}_{{\rm d},\alpha}\|, \quad  \|\widetilde{\mathcal{T}}_{{\rm d}}\|_{2} :=\sum_{\alpha=2}^t
% \| \widetilde{\mathcal{T}}_{{\rm d},\alpha}\|,\quad   \| 
%G\|_{2} :=\sum_{\alpha=2}^t
% \| 
%G_\alpha\|,
%\end{equation*}  cf \eqref{2-t-sum}.

\subsection{Taylor expansion of $f(\boldsymbol{T},Y,\boldsymbol{Q})$} \label{sect3.1}

With the notation $f_{\alpha}$ in \eqref{falpha},   
 noticing the  integration domain \eqref{regiondelta} and the decomposition of     $T_{\alpha}$   as  sum of  a diagonal matrix $\sqrt{T_{{\rm d},\alpha}}$ and a strictly upper triangular  matrix ${T_{{\rm u},\alpha}}$ as in \eqref{Talpha 2}, 
  introduce new matrix variables 
\begin{equation}\label{Tdalpha1 critical non0}
T_{{\rm d},\alpha}=\frac{c_{\alpha}}{f_{\alpha}}
\mathbb{I}_n+\widetilde{T}_{{\rm d},\alpha},
\quad
T_{{\rm u},\alpha}=\frac{1}{\sqrt{f_{\alpha}}}\hat{T}_{{\rm u},\alpha},    \quad \alpha=1,\cdots,t.
\end{equation}
It's easy to obtain 
\begin{equation}\label{Talphaexpansion1 critical non00}
N^{-1}\sum_{\alpha=1}^t 
\sum_{j=1}^n (R_{\alpha,N}-j)\log\big(
t_{j,j}^{(\alpha)}
\big)=N^{-1}\sum_{\alpha=1}^t
\Big(
nR_{\alpha,N}-\frac{n(n+1)}{2}
\Big)\log\frac{c_{\alpha}}{f_{\alpha}}
+O\Big(
\sum_{\alpha=1}^t \|
\widetilde{T}_{{\rm d},\alpha}
\|
\Big),
\end{equation}
\begin{multline}\label{Talphaexpansion1 critical non0}
c_{\alpha}{\rm Tr}\log T_{{\rm d},\alpha}-f_{\alpha}{\rm Tr}(T_{{\rm d},\alpha})
=
nc_{\alpha}\big(
\log \frac{c_{\alpha}}{f_{\alpha}}-1
\big)-\frac{ f_{\alpha}^2}{2c_{\alpha}}{\rm Tr}\big(
\widetilde{T}_{{\rm d},\alpha}^2 \big)
\\
+\frac{f_{\alpha}^3}{3c_{\alpha}^2 }{\rm Tr}\big(
\widetilde{T}_{{\rm d},\alpha}^3 \big)
-\frac{ f^4_{\alpha}}{4c_{\alpha}^3}{\rm Tr}\big(
\widetilde{T}_{{\rm d},\alpha}^4 \big)
+O\big(
\|
\widetilde{T}_{{\rm d},\alpha}
\|^5
\big),
\end{multline}
\begin{equation}\label{Talphaexpansion2 critical non0}
\sqrt{T_{{\rm d},\alpha}}=\sqrt{\frac{c_{\alpha}}{f_{\alpha}}}
\Big(
\mathbb{I}_n+\frac{ f_{\alpha}}{2c_{\alpha}} \widetilde{T}_{{\rm d},\alpha}
-\frac{ f_{\alpha}^2}{8c_{\alpha}^2} \widetilde{T}_{{\rm d},\alpha}^2
+O\big(
\|
\widetilde{T}_{{\rm d},\alpha}
\|^3
\big)
\Big),
\end{equation}
\begin{multline}\label{Talphaexpansion3 critical non0}
T_{\alpha}T_{\alpha}^*=
\frac{c_{\alpha}}{f_{\alpha}}\mathbb{I}_n+\widetilde{T}_{{\rm d},\alpha}+
\frac{\sqrt{c_{\alpha}}}{f_{\alpha}}  \Big(
\mathbb{I}_n+\frac{ f_{\alpha}}{2c_{\alpha}} \widetilde{T}_{{\rm d},\alpha}
-\frac{ f_{\alpha}^2}{8c_{\alpha}^2} \widetilde{T}_{{\rm d},\alpha}^2
+O\big(
\|
\widetilde{T}_{{\rm d},\alpha}
\|^3
\big)
\Big)
\hat{T}_{{\rm u},\alpha}^*   
\\
+\frac{\sqrt{c_{\alpha}}}{f_{\alpha}}
\hat{T}_{{\rm u},\alpha}
 \Big(
\mathbb{I}_n+\frac{ f_{\alpha}}{2c_{\alpha}} \widetilde{T}_{{\rm d},\alpha}
-\frac{ f_{\alpha}^2}{8c_{\alpha}^2} \widetilde{T}_{{\rm d},\alpha}^2
+O\big(
\|
\widetilde{T}_{{\rm d},\alpha}
\|^3
\big)
\Big)
+\frac{1}{f_{\alpha}}  \hat{T}_{{\rm u},\alpha}\hat{T}_{{\rm u},\alpha}^*,
\end{multline}
and  for $i<j$,
\begin{multline}\label{Talphaexpansion4 critical non0}
\sum_{\alpha=1}^t \big(a_{\alpha}T_{\alpha}T_{\alpha}^*
\big)_{i,j}
=\sum_{\alpha=1}^t \frac{a_{\alpha}\sqrt{c_{\alpha}}}{f_{\alpha}}
\Big(
\hat{T}_{{\rm u},\alpha}
\big(
\mathbb{I}_n+\frac{ f_{\alpha}}{2c_{\alpha}} \widetilde{T}_{{\rm d},\alpha}
-\frac{ f_{\alpha}^2}{8c_{\alpha}^2} \widetilde{T}_{{\rm d},\alpha}^2
+O\big(
\|
\widetilde{T}_{{\rm d},\alpha}
\|^3
\big)
\big)
\Big)_{i,j}
\\
+\sum_{\alpha=1}^t \frac{a_{\alpha}}{f_{\alpha}}
\big(
\hat{T}_{{\rm u},\alpha}\hat{T}_{{\rm u},\alpha}^*
\big)_{i,j}.
\end{multline} 
Moreover, we see from the boundary condition $P_{00}(z_0)=1$ that  the restriction condition in  \eqref{integrationregionprop} becomes
\begin{equation}\label{integral region critical non0} 
 \sum_{\alpha=1}^t  \widetilde{T}_{{\rm d},\alpha}
+
\sum_{\alpha=1}^t \frac{\sqrt{c_{\alpha}}}{f_{\alpha}} \big(  
\hat{T}_{{\rm u},\alpha}
+ 
\hat{T}_{{\rm u},\alpha}^*\big)+S+R+R^*\leq 0,
\end{equation} 
where
\begin{equation}\label{S critical non0}
S=\sum_{\alpha=1}^t\frac{1}{f_{\alpha}}
\hat{T}_{{\rm u},\alpha}\hat{T}_{{\rm u},\alpha}^*
+Q_{t+1}Q_{t+1}^*+Q_{0}Q_{0}^*
\end{equation} 
and  a  strictly upper triangular matrix
\begin{equation}\label{R critical non0}
R=\sum_{\alpha=1}^t \frac{\sqrt{c_{\alpha}}}{f_{\alpha}}
\hat{T}_{{\rm u},\alpha}
\Big(
\frac{ f_{\alpha}}{2c_{\alpha}} \widetilde{T}_{{\rm d},\alpha}
-\frac{ f_{\alpha}^2}{8c_{\alpha}^2} \widetilde{T}_{{\rm d},\alpha}^2
+O\big(
\|
\widetilde{T}_{{\rm d},\alpha}
\|^3
\big)\Big).
\end{equation} 

On the other hand, we have  %In view of \eqref{Talphaexpansion3 critical non0},
\begin{multline}\label{Talphaexpansion6 critical non0}
N^{-\frac{1}{4}}
\sum_{\alpha=1}^t\Big(
\overline{a}_{\alpha}{\rm Tr}\big( \hat{Z}
T_{\alpha}T_{\alpha}^* \big)
+a_{\alpha}{\rm Tr}\big( \hat{Z}^*T_{\alpha}T_{\alpha}^* \big)
\Big)      
\\
=N^{-\frac{1}{4}}\sum_{\alpha=1}^t \Big(
\frac{c_{\alpha}}{f_{\alpha}}{\rm Tr}(H_{a_\alpha})
+
{\rm Tr}\big( H_{a_\alpha}\widetilde{T}_{{\rm d},\alpha} \big)
+\frac{1}{f_{\alpha}}
{\rm Tr}\big( H_{a_\alpha}
\hat{T}_{{\rm u},\alpha}\hat{T}_{{\rm u},\alpha}^* \big)
\Big),
\end{multline}
where
\begin{equation}\label{Halpha critical non0}
H_{a}:=\overline{a}\hat{Z}+
a\hat{Z}^*
%:={\rm diag}\big(H_1^{(\alpha)},\cdots,H_n^{(\alpha)}\big)
.
\end{equation}  

According to \eqref{tau inverse expansion}, we divide the relevant  analysis into  three main  parts: (i) {\bf Steps 1}-{\bf 5} deal with the  diagonal entries ({\bf Steps 1-2} ) and  with strictly upper triangular entries ({\bf Steps 3}-{\bf 5} ) in  $ \sum_{\alpha=1}^t
c_{\alpha}\log\det(T_{{\rm d},\alpha})+h(Q) $ in \eqref{fTY};   (ii) {\bf Step 6} is devoted to  $ (1/\tau-1)h(Q) $; (iii) {\bf Step 7} is devoted to matrix variable $Y$   and summary.  Part (i) seems quite complicated and a lot of notations are involved. % See \eqref{fTY expansion critical non0}-\eqref{f2 critical non0}.
%%%%%%%%%%%%%%%%%%%%%%%%%%%%%%%%%%%%%%%%%%%%%%%%%%%%%%%%%%%%%%%%%%%%%%%%%%%%%%%%%%%%%%%%%%%%%%%%%%%%%%%%%%%%%%%%%%%%%%%%%%%%%%%%%%%%%%%%%%%%5

{\bf Step 1: First transformation of  diagonal entries.}  Let  $S^{({\rm d})}$ be a  diagonal  matrix extracted   from the diagonal part of $S$ given  in \eqref{S critical non0},  introduce a new diagonal matrix   $\Gamma_1$ and make change of variables  from 
 $\widetilde{T}_{{\rm d},1}, \widetilde{T}_{{\rm d},2}, \cdots, \widetilde{T}_{{\rm d},t}$ to 
\begin{equation}\label{matrix transformations1 critical non0}
\begin{aligned}
&\Gamma_1:=\sum_{\alpha=1}^t    \widetilde{T}_{{\rm d},\alpha}+S^{({\rm d})},
\quad \widetilde{T}_{{\rm d},2},\, \ldots, \widetilde{T}_{{\rm d},t}.
%\alpha=2,\cdots,t.
\end{aligned}
\end{equation}
We will see that  the typical size for  $Q_{t+1}, \widetilde{T}_{{\rm d},2},\, \ldots, \widetilde{T}_{{\rm d},t}$ are $N^{-1/2}$   while it  is  $N^{-1}$ for  $\Gamma_1$. So we will do Taylor expansions up to some proper orders  accordingly.

With  $f(T,Y,Q)$ in \eqref{fTY},   combining  \eqref{Talphaexpansion1 critical non0} and  \eqref{Talphaexpansion6 critical non0}, we focus on  the power sums  
\begin{equation}\label{powersum}
\begin{aligned}
 PS_k:=\sum_{\alpha=1}^t\frac{f^k_{\alpha}}{k c^{k-1}_{\alpha}}{\rm Tr}\big(
\widetilde{T}_{{\rm d},\alpha}^k \big), \quad k=1,2,3,4.
\end{aligned}
\end{equation}
It's easy to see from \eqref{S critical non0}  that  
\begin{equation}\label{linear critical non0}
|z_0|^2\Big(
\sum_{\alpha=1}^t {\rm Tr}\big( T_{\alpha}T_{\alpha}^* \big)
+{\rm Tr}\big( Q_{t+1}Q_{t+1}^* \big)
+{\rm Tr}\big( Q_{0}Q_{0}^* \big)
\Big)=n|z_0|^2+|z_0|^2{\rm Tr}\big( \Gamma_1 \big),
\end{equation}
and 
\begin{multline}\label{Talphaexpansion5 critical non0}
-\sum_{\alpha=1}^t\frac{f_{\alpha}^2}{2c_{\alpha}}{\rm Tr}\big(
\widetilde{T}_{{\rm d},\alpha}^2 \big)
=-\frac{f_{1}^2}{2c_{1}}{\rm Tr}\big(
\sum_{\alpha=2}^t \widetilde{T}_{{\rm d},\alpha}+S^{({\rm d})}
 \big)^2-\sum_{\alpha=2}^t
 \frac{f_{\alpha}^2}{2c_{\alpha}}{\rm Tr}\big(
  \widetilde{T}_{{\rm d},\alpha}^2
 \big)
 \\
 +O\Big(
\| \Gamma_1 \|\Big( 
 \| \widetilde{T}_{{\rm d}}\|_{2} 
+\| S^{({\rm d})} \| +\| \Gamma_1 \|\Big)
 \Big),
\end{multline}
where 
\begin{equation} \label{2-t-sum}
 \| \widetilde{T}_{{\rm d}}\|_{2} :=\sum_{\alpha=2}^t\| 
  \widetilde{T}_{{\rm d},\alpha}\|. 
\end{equation}
 Rewriting the second term on the RHS of \eqref{Talphaexpansion6 critical non0}
\begin{equation}\label{Talphaexpansion7 critical non0}
N^{-\frac{1}{4}}\sum_{\alpha=1}^t
{\rm Tr}\big( H_{a_\alpha}\widetilde{T}_{{\rm d},\alpha} \big)
= N^{-\frac{1}{4}}\sum_{\alpha=2}^t 
{\rm Tr}\big( H_{a_\alpha-a_1}\widetilde{T}_{{\rm d},\alpha} \big)
-N^{-\frac{1}{4}}{\rm Tr}\big( H_{a_1} S^{({\rm d})} \big)
+O\big( N^{-\frac{1}{4}} \| \Gamma_1 \| \big),
\end{equation}  
together with \eqref{Talphaexpansion5 critical non0} 
we  obtain in a quadratic  form
\begin{multline}\label{Talphaexpansion8 critical non0}
-\sum_{\alpha=1}^t\frac{f_{\alpha}^2}{2c_{\alpha}}{\rm Tr}\big(
\widetilde{T}_{{\rm d},\alpha}^2 \big)
+N^{-\frac{1}{4}}\sum_{\alpha=1}^t
{\rm Tr}\big( H_{a_\alpha}\widetilde{T}_{{\rm d},\alpha} \big)
=-{\rm Tr}\big( (X-\frac{1}{2}B\Sigma^{-1})\Sigma (X-\frac{1}{2}B\Sigma^{-1})^t\big)+ \frac{1}{4}{\rm Tr}\big( B\Sigma^{-1}B^t\big)%-{\rm Tr}\big( X\Sigma X^t\big)+ {\rm Tr}\big( XB^t\big)+
\\
-N^{-\frac{1}{4}}{\rm Tr}(H_{a_1}S^{({\rm d})})
-\frac{f_{1}^2}{2c_{1}}{\rm Tr}\big( \big( S^{({\rm d})} \big)^2\big) +O\Big(
\| \Gamma_1 \|\Big( 
  \| \widetilde{T}_{{\rm d}}\|_{2}  +\| \Gamma_1 \|+N^{-1/4}\Big)
 \Big).
\end{multline}  
where
\begin{equation}  \label{Sigma}
\Sigma=[\Sigma_{\alpha,\beta}]_{\alpha,\beta=2}^t,  \quad \Sigma_{\alpha,\beta}=\frac{f_{\alpha}^2}{2c_{\alpha}}
\delta_{\alpha,\beta}+\frac{f_{1}^2}{2c_{1}},
\end{equation} 
and 
 two  $n\times (t-1)$ matrices 
\begin{equation}\label{XB}
X=\begin{bmatrix}
\big(
\widetilde{T}_{{\rm d},1+j} \big)_{i,i} 
\end{bmatrix}_{i=1, \ldots,n\atop
j=1,\ldots, t-1},  \quad B=\begin{bmatrix}
\big(
N^{-\frac{1}{4}}
H_{a_{1+j}-a_1}
-\frac{f_{1}^2}{c_{1}} S^{({\rm d})} \big)_{i,i} 
\end{bmatrix}_{i=1, \ldots,n\atop
j=1,\ldots, t-1}
\end{equation}
are chosen from  entries of certain diagonal  matrices.

Noticing 
\begin{equation}\label{Ainverse critical non0}
(\Sigma^{-1})_{\alpha,\beta}=
\frac{2c_{\alpha}\delta_{\alpha,\beta}}{f_{\alpha}^2}
-\frac{2c_{\alpha}c_{\beta}}{f_{\alpha}^2f_{\beta}^2P_1},
\quad
\alpha,\beta=2,\cdots,t,
\end{equation} 
where $P_1$ is given in  \eqref{parameter2}, 
we get 
\begin{multline}\label{bi A bit critical non0}
\frac{1}{4}{\rm Tr}\big( B\Sigma^{-1}B^t\big)
=\frac{1}{2}\sum_{\alpha=2}^t {\rm Tr}\Big( 
N^{-\frac{1}{4}}
H_{a_{\alpha}-a_1}
-\frac{f_{1}^2}{c_{1}} S^{({\rm d})} 
\Big)^2           \\
-\frac{1}{2P_1}{\rm Tr}\Big( 
N^{-\frac{1}{4}}\sum_{\alpha=1}^t 
\frac{c_{\alpha}}
{f_{\alpha}^2}\big(H_{a_{\alpha}-z_0}-H_{a_{1}-z_0}
\big) -\frac{f_{1}^2}{c_{1}}S^{({\rm d})}
\sum_{\alpha=2}^t\frac{c_{\alpha}}{f_{\alpha}^2} 
\Big)^2.
\end{multline}
Since $z_0$ is a critical point such that $P_{0}(z_0)=0$,  by definition of   $P_1$  in  \eqref{parameter2} and $H_a$ in \eqref{Halpha critical non0} 
\begin{align}\label{bi A bit1 critical non0}
\frac{1}{4}{\rm Tr}\big( B\Sigma^{-1}B^t\big)
&=\frac{1}{2} N^{-\frac{1}{2}}  \sum_{\alpha=2}^t 
{\rm Tr}\big( H_{a_{\alpha}-z_0}^2\big)         +\Big(\frac{f_{1}^2}{2c_{1}}-\frac{1}{2P_1} \Big)
{\rm Tr}\big(S^{({\rm d})} \big)^2+N^{-\frac{1}{4}}{\rm Tr}\big(H_{a_{1}-z_0}S^{({\rm d})}\big),
\end{align} 
from which simple calculation yields 
\begin{multline}\label{bi A bit3 critical non0}
\frac{1}{4}{\rm Tr}\big( B\Sigma^{-1}B^t\big)-N^{-\frac{1}{4}}{\rm Tr}\big(H_{a_{1}}S^{({\rm d})}\big)
-\frac{f_{1}^2}{2c_{1}} 
{\rm Tr}\big(S^{({\rm d})} \big)^2
\\
=\frac{1}{2}N^{-\frac{1}{2}}\sum_{\alpha=1}^t 
\frac{c_{\alpha}}{f_{\alpha}^2} 
\Big( (a_{\alpha}-z_0)^2{\rm Tr}\big( \hat{Z}^* \big)^2 
+\overline{(a_{\alpha}-z_0)^2}{\rm Tr}\big( \hat{Z}^2 \big)
\Big)
+N^{-\frac{1}{2}} {\rm Tr}\big( \hat{Z}\hat{Z}^* \big)
\\
-N^{-\frac{1}{4}}\Big(
z_0{\rm Tr}\big( \hat{Z}^*S^{({\rm d})} \big)
+\overline{z}_0{\rm Tr}\big( \hat{Z}S^{({\rm d})} \big)
\Big)
-\frac{1}{2P_1}{\rm Tr}\big( S^{({\rm d})} \big)^2.
\end{multline}
Note  that $S^{({\rm d})}$ is the diagonal part of $S$  defined in  \eqref{S critical non0}, adding 
  the third  term on the RHS of \eqref{Talphaexpansion6 critical non0} we get 
\begin{multline}\label{Tr hatZ Sd1}
-N^{-\frac{1}{4}}\Big(
z_0{\rm Tr}\big( \hat{Z}^*S^{({\rm d})} \big)
+\overline{z}_0{\rm Tr}\big( \hat{Z}S^{({\rm d})} \big)
\Big) +N^{-\frac{1}{4}}\sum_{\alpha=1}^t \frac{1}{f_{\alpha}}
{\rm Tr}\big( H_{a_{\alpha}}\hat{T}_{{\rm u},\alpha}\hat{T}_{{\rm u},\alpha}^* \big)     
   \\
=N^{-\frac{1}{4}}\sum_{\alpha=1}^t
\frac{1}{f_{\alpha}}
\Big(
\overline{a_{\alpha}-z_0}
{\rm Tr}\big( \hat{Z}\hat{T}_{{\rm u},\alpha}\hat{T}_{{\rm u},\alpha}^* \big)+(a_{\alpha}-z_0)
{\rm Tr}\big( \hat{Z}^*
\hat{T}_{{\rm u},\alpha}\hat{T}_{{\rm u},\alpha}^* \big)
\Big)
\\
-N^{-\frac{1}{4}}\Big(
z_0{\rm Tr}\big(
\hat{Z}^*Q_0Q_0^*
\big)+
\overline{z}_0 
{\rm Tr}\big(
\hat{Z} Q_0Q_0^*
\big)
\Big)
+O\big( \| Q_{t+1} \|^2 \big).
\end{multline}

Therefore, combination of  \eqref{Talphaexpansion8 critical non0}, \eqref{bi A bit3 critical non0} and \eqref{Tr hatZ Sd1} gives rise to 
\begin{multline}\label{2powerterm}
-\sum_{\alpha=1}^t\frac{f_{\alpha}^2}{2c_{\alpha}}{\rm Tr}\big(
\widetilde{T}_{{\rm d},\alpha}^2 \big)
+N^{-\frac{1}{4}}\sum_{\alpha=1}^t
{\rm Tr}\big( H_{a_\alpha}\widetilde{T}_{{\rm d},\alpha} \big)+N^{-\frac{1}{4}}\sum_{\alpha=1}^t \frac{1}{f_{\alpha}}
{\rm Tr}\big( H_{a_{\alpha}}\hat{T}_{{\rm u},\alpha}\hat{T}_{{\rm u},\alpha}^* \big)    
\\
=-{\rm Tr}\Big( (X-\frac{1}{2}B\Sigma^{-1})\Sigma (X-\frac{1}{2}B\Sigma^{-1})^t\Big)-\frac{1}{2P_1}{\rm Tr}\big( S^{({\rm d})} \big)^2+N^{-\frac{1}{2}} {\rm Tr}\big( \hat{Z}\hat{Z}^* \big)
\\+\frac{1}{2}N^{-\frac{1}{2}}\sum_{\alpha=1}^t 
\frac{c_{\alpha}}{f_{\alpha}^2} 
\Big( (a_{\alpha}-z_0)^2{\rm Tr}\big( \hat{Z}^* \big)^2 
+\overline{(a_{\alpha}-z_0)^2}{\rm Tr}\big( \hat{Z}^2 \big)
\Big)
+
N^{-\frac{1}{4}}\sum_{\alpha=1}^t
\frac{1}{f_{\alpha}}
{\rm Tr}\big( H_{a_\alpha-z_0} \hat{T}_{{\rm u},\alpha}\hat{T}_{{\rm u},\alpha}^* \big) 
   \\
+O\Big(
\| \Gamma_1 \|\Big( 
 \| \widetilde{T}_{{\rm d}}\|_{2}
+\| S^{({\rm d})} \| +\| \Gamma_1 \|+N^{-1/4}\Big)
 \Big)+O\big( \| Q_{t+1} \|^2 \big).
\end{multline}

{\bf Step 2: Second transformation of diagonal entries.}  With  two  $n\times (t-1)$ matrices $X$ and $B$  defined in \eqref{XB}, 
 let's  introduce     $t-1$ new diagonal matrices     such that 
$\widetilde{\mathcal{T}}_{\rm d,2},  \ldots, \widetilde{\mathcal{T}}_{\rm d,t}$ 
\begin{equation}\label{newT}
X-\frac{1}{2}B\Sigma^{-1}=\begin{bmatrix}
\big(
\widetilde{\mathcal{T}}_{{\rm d},1+j} \big)_{i,i} 
\end{bmatrix}_{i=1, \ldots,n\atop
j=1,\ldots, t-1}, \end{equation}
then it's easy to see from \eqref{XB} and \eqref{Ainverse critical non0} that  
\begin{equation}\label{newT-2}
\widetilde{\mathcal{T}}_{\rm d,\alpha}=  \widetilde{T}_{{\rm d},\alpha}  -
\frac{c_{\alpha}}{f_{\alpha}^2}
\Big(
N^{-\frac{1}{4}} H_{  a_{\alpha}-a_{1}} -\frac{f_{1}^2}{c_{1}}S^{({\rm d})}
-\frac{1}{P_1}\sum_{\beta=2}^t 
\frac{c_{\beta}}{f_{\beta}^2}
\big( N^{-\frac{1}{4}} H_{  a_{\beta}-a_{1}} -\frac{f_{1}^2}{c_{1}}S^{({\rm d})} \big)
\Big) \end{equation}  
for $\alpha=2. \ldots, t$, which further  reduces to  
\begin{equation}\label{newT-3}
\widetilde{\mathcal{T}}_{\rm d,\alpha}=  \widetilde{T}_{{\rm d},\alpha}  -
\frac{c_{\alpha}}{f_{\alpha}^2}
\Big(
N^{-\frac{1}{4}} H_{  a_{\alpha}-z_0} 
-\frac{1}{P_1} S^{({\rm d})} 
\Big), \end{equation}  
because of the assumption  $P_0(z_0)=0$.  Moreover,
\begin{equation}\label{newT-4}
\sum_{\alpha=2}^t \big(\widetilde{\mathcal{T}}_{\rm d,\alpha}-\widetilde{T}_{{\rm d},\alpha}\big)=
N^{-\frac{1}{4}}\frac{c_{1}}{f_{1}^2}
H_{a_{1}-z_0}+\Big(
1-\frac{c_1}{P_1f_1^2}\Big)S^{({\rm d})}. \end{equation}  

Next, we express the power sums   $PS_3$ and  $PS_4$ given by \eqref{powersum}  in terms of  new variables $\{\widetilde{\mathcal{T}}_{\rm d,\alpha}\}$ through the relations  \eqref{newT-3}.  For this, 
after doing  simple  expansion, 
we know that  
for $\alpha=2,\cdots,t$,
\begin{multline}\label{Talphaexpansion11 critical non0}
\big(
\widetilde{T}_{{\rm d},\alpha} \big)^3=3N^{-\frac{1}{2}}
\Big(
\widetilde{\mathcal{T}}_{\rm d,\alpha}-
\frac{c_{\alpha}}{P_1f_{\alpha}^2}S^{({\rm d})}
\Big)\frac{c_{\alpha}^2}{f_{\alpha}^4}
\big( H_{a_{\alpha}-z_0} \big)^2  +N^{-\frac{3}{4}}
\frac{c_{\alpha}^3}{f_{\alpha}^6} \big( H_{a_{\alpha}-z_0} \big)^3 
 \\
+O\Big(
\big( \|\widetilde{\mathcal{T}}_{{\rm d}}\|_{2}+\| S^{({\rm d})} \| \big)^2
\big( \|\widetilde{\mathcal{T}}_{{\rm d}}\|_{2}+\| S^{({\rm d})} \|
+N^{-\frac{1}{4}} \big)
\Big),
\end{multline}
and  by  \eqref{newT-4}
\begin{multline}\label{Talphaexpansion10 critical non0}
\big(
\sum_{\alpha=2}^t  \widetilde{T}_{{\rm d},\alpha} +S^{({\rm d})}\big)^3
=3N^{-\frac{1}{2}}
\Big(
\sum_{\alpha=2}^t \widetilde{\mathcal{T}}_{\rm d,\alpha}+
\frac{c_1}{P_1f_1^2}S^{({\rm d})}
\Big)\frac{c_{1}^2}{f_{1}^4}
\big(  H_{a_1-z_0}\big)^2    
\\
-N^{-\frac{3}{4}}
\frac{c_{1}^3}{f_{1}^6}\big(H_{a_1-z_0} \big)^3
+O\Big(
\big(\|\widetilde{\mathcal{T}}_{{\rm d}}\|_{2}+\| S^{({\rm d})} \| \big)^2
\big(\|\widetilde{\mathcal{T}}_{{\rm d}}\|_{2}+\| S^{({\rm d})} \|
+N^{-\frac{1}{4}} \big)
\Big),
\end{multline} 
where
 \begin{equation} \label{cal2-t-sum}  
 \|\widetilde{\mathcal{T}}_{{\rm d}}\|_{2} :=\sum_{\alpha=2}^t
 \| \widetilde{\mathcal{T}}_{{\rm d},\alpha}\|,  
\end{equation} 

  By definition of   $P_1$  in  \eqref{parameter2},  it's easy to see 
\begin{equation} \label{H2power}
\sum_{\alpha=1}^t\frac{c_{\alpha}}{f_{\alpha}^3}
\big( H_{a_{\alpha}-z_0}\big)^2=P_2\overline{\hat{Z}^2}
+2P_1 \hat{Z} \overline{Z} +\overline{P}_2\hat{Z}^2,
\end{equation} 
where  $P_2$ is defined in \eqref{parameter2}.
Again with  $P_{0}(z_0)=0$,  combination of   \eqref{Talphaexpansion11 critical non0} and  \eqref{Talphaexpansion10 critical non0} yields 
\begin{multline}\label{Talphaexpansion12 critical non0}
\sum_{\alpha=1}^t
\frac{f_{\alpha}^3}{3c_{\alpha}^2}{\rm Tr}\big(
\widetilde{T}_{{\rm d},\alpha}^3 \big)=
\frac{1}{3}N^{-\frac{3}{4}}\sum_{\alpha=1}^t
\frac{c_{\alpha}}{f_{\alpha}^3}\Big(
(a_{\alpha}-z_0)^3{\rm Tr}\big( \hat{Z}^* \big)^3+
\overline{a_{\alpha}-z_0}^3{\rm Tr}\big( \hat{Z}^3 \big)
\Big)        
\\
+N^{-\frac{1}{2}} {\rm Tr} \left(
\sum_{\alpha=2}^t  \widetilde{\mathcal{T}}_{\rm d,\alpha}
\Big(
\frac{1}{f_{\alpha}} \big( H_{a_{\alpha}-z_0} \big)^2
-\frac{1}{f_{1}}\big(   H_{a_{z}-z_0} \big)^2
\Big)-\frac{1}{P_1}{\rm Tr}
\Big(
S\big( P_2 (\hat{Z}^*)^2
+2P_1\hat{Z}\hat{Z}^*+\overline{P}_2\hat{Z}^2 \big)
\Big)
\right)         \\
+
O\Big(
\big( \|\widetilde{\mathcal{T}}_{{\rm d}}\|_{2}+ \| \Gamma_1  \| +N^{-\frac{1}{4}}\big)^2
 \| \Gamma_1  \|
\Big)+
O\Big(
\big( \|\widetilde{\mathcal{T}}_{{\rm d}}\|_{2}+\| S^{({\rm d})} \| \big)^2
\big(\|\widetilde{\mathcal{T}}_{{\rm d}}\|_{2} +\| S^{({\rm d})} \|
+N^{-\frac{1}{4}} \big)
\Big).
\end{multline}

Similarly,  we can obtain 
\begin{multline}\label{Talphaexpansion13 critical non0}
-\sum_{\alpha=1}^t
\frac{f_{\alpha}^4}{4c_{\alpha}^3}{\rm Tr}\big(
\widetilde{T}_{{\rm d},\alpha}^4 \big)=
-\frac{1}{4N}\sum_{\alpha=1}^t
\frac{c_{\alpha}}{f_{\alpha}^4}{\rm Tr}\big(
H_{a_{\alpha}-z_0}^4
 \big)+ 
O\Big(
\big(\|\widetilde{\mathcal{T}}_{{\rm d}}\|_{2}+\| S^{({\rm d})} \|+ \| \Gamma_1  \| +N^{-\frac{1}{4}}\big)^3
 \| \Gamma_1  \|
\Big)\\
+
O\Big(
\big(\|\widetilde{\mathcal{T}}_{{\rm d}}\|_{2}+\| S^{({\rm d})} \| \big)^3
\big(\|\widetilde{\mathcal{T}}_{{\rm d}}\|_{2}+\| S^{({\rm d})} \|
+N^{-\frac{1}{4}} \big)
\Big).
\end{multline}

%%%%%%%%%%%%%%%%%%%%%%%%%%%%%%%%%%%%%%%%%%%%%%%%%%%%%%%%%%%%%%%%%%%%%%%%%%%%%%%%%%%%%%%%%%%%%%%%%%%%%%%%%%%%%%%%%%%%%%%%%%%%%%%%%%%%%%%%%%%%%%%%%%%

{\bf Step 3: Transformation of strictly  upper  triangular entries.}  We deal with the  strictly   triangular entries of $\{T_{\alpha}\}$; cf \eqref{Talphaexpansion4 critical non0}. 
By  boundary condition   $P_{00}(z_0)=1$, the critical  condition   $P_{0}(z_0)=0$ and the  definition of $ P_1$, we can introduce a unitary matrix  with  two chosen  columns   
\begin{equation}\label{V critical non0}
V=[v_{\alpha,\beta}]_{\alpha,\beta=1}^t,\quad
v_{\alpha,1}=\frac{\overline{a_{\alpha}-z_0}\sqrt{c_{\alpha}}}{f_{\alpha}},
\quad
v_{\alpha,2}=\frac{1}{\sqrt{P_1}}\frac{\sqrt{c_{\alpha}}}{f_{\alpha}}, \quad \alpha=1,\cdots,t.
\end{equation} 
Therefore,   we can define a family of  strictly upper triangular matrices   $\{G_\alpha\}$  in terms of   $\{\hat{T}_{{\rm u},\alpha}\}$, that is,  for $\alpha=1,\cdots,t$, 
\begin{equation}\label{Talphahat galpha}
\hat{T}_{{\rm u},\alpha}=[ \hat{t}_{i,j}^{(\alpha)}]_{i,j=1}^n,   \quad  G_{\alpha}=[ g_{i,j}^{(\alpha)}]_{i,j=1}^n  ,\quad \hat{t}_{i,j}^{(\alpha)}=g_{i,j}^{(\alpha)}=0
 \ \mbox{whenever} \   i\leq j,
\end{equation} 
such that  
\begin{equation}\label{g hatT connection0}
\begin{bmatrix}
\hat{t}_{i,j}^{(1)} \\
    \vdots
    \\
\hat{t}_{i,j}^{(t)}
\end{bmatrix}=V\begin{bmatrix}
g_{i,j}^{(1)}\\
    \vdots
    \\ g_{i,j}^{(t)}
\end{bmatrix},\quad \forall\, i>j.
\end{equation} 
In matrix form this shows that 
\begin{equation}\label{g hatT connection1}
\hat{T}_{{\rm u},\alpha}=\sum_{\beta=1}^t
v_{\alpha,\beta} G_{\beta}.
\end{equation} 
So it immediately follows that the first term on the RHS of  \eqref{Talphaexpansion4 critical non0}  can be rewritten as \begin{equation}\label{change1}
\sum_{\alpha=1}^t \frac{a_{\alpha}\sqrt{c_{\alpha}}}{f_{\alpha}}
\hat{T}_{{\rm u},\alpha}
=G_1+z_0\sqrt{P_1}G_2,
\end{equation} and   the quadratic part  of $\hat{T}_{{\rm u},\alpha}$  in  the first summation  $-\sum_{\alpha} f_{\alpha}{\rm Tr}(T_{\alpha}T_{\alpha}^*)$ in  \eqref{hQrewrite} equals to 
\begin{equation}\label{unitarity critical non0}
-\sum_{\alpha=1}^t {\rm Tr}
\big( \hat{T}_{{\rm u},\alpha}\hat{T}_{{\rm u},\alpha}^* \big)
=-\sum_{\alpha=1}^t {\rm Tr}
\big( G_{\alpha}G_{\alpha}^* \big);
\end{equation} 
cf. \eqref{Talphaexpansion3 critical non0}.

%%%%%%%%%%%%%%%%%%%%%%%%%%%%%%%%%%%%%%%%%%%%%%%%%%%%%%%%%%%%%%%%%%%%%%%%%%%%%%%%%%%%%%%%%%%%%%%%%%%%%%%%%%%%%%%%%%%%%%%%%%%%%%%%%%%%%%%%%%%%%%%%%%%%
In order to  simplify the integration domain in \eqref{integral region critical non0}, let $S^{({\rm off},{\rm u})}$ denote the  strictly upper triangular  part of $S$  in \eqref{S critical non0} and put 
\begin{equation}\label{hat g2 critical non0}
\hat{G}_2=G_2+\frac{1}{\sqrt{P_1}}\big(
S^{({\rm off},{\rm u})}+R
\big),
\end{equation}
then the integration domain reduces to   \begin{equation}\label{region critical non0}
\Gamma_1+\sqrt{P_1}\hat{G}_2+\sqrt{P_1}\hat{G}_2^*
\leq 0;
\end{equation}  
cf.  \eqref{integral region critical non0} and \eqref{matrix transformations1 critical non0}.

On the other hand, noting that  the size of $\Gamma_1$ is  of order $N^{-1}$, combine  \eqref{R critical non0}, \eqref{g hatT connection1} and \eqref{matrix transformations1 critical non0} and we have 
\begin{multline}\label{R expansion1 critical non0}
R=\frac{\sqrt{c_1}}{f_1}\sum_{\beta=1}^t
v_{1,\beta}G_{\beta}\Big(-
\frac{f_1}{2c_1}\big( \sum_{\alpha=2}^t \widetilde{T}_{{\rm d},\alpha}+S^{({\rm d})}  \big)
-\frac{f_1^2}{8c_1^2}
\big( \sum_{\alpha=2}^t \widetilde{T}_{{\rm d},\alpha}+S^{({\rm d})}  \big)^2
\Big)
\\
+\frac{\sqrt{c_1}}{f_1}\sum_{\beta=1}^t
v_{1,\beta}G_{\beta}O\big( \| \Gamma_1 \|+( \|  \widetilde{T}_{{\rm d}}\|_2+\| S^{({\rm d})}  \|)^3 \big)
\\
+\sum_{\alpha=2}^t \frac{\sqrt{c_{\alpha}}}{f_{\alpha}}
\sum_{\beta=1}^t v_{\alpha,\beta}G_{\beta}\Big(
\frac{f_{\alpha}}{2c_{\alpha}}\widetilde{T}_{{\rm d},\alpha}-
\frac{f_{\alpha}^2}{8c_{\alpha}^2}\widetilde{T}_{{\rm d},\alpha}^2
+O\big( \| \widetilde{T}_{{\rm d},\alpha} \|^3 \big)\Big).
\end{multline}
Substitution of  \eqref{newT-3} into \eqref{R expansion1 critical non0},  simple calculation shows  
\begin{multline}\label{R expansion2 critical non0}
R=R_2+R_3+O\Big(  \| G_1 \| \| \Gamma_1 \|+
 \| G \|_2 \big( \| \widetilde{\mathcal{T}}_{{\rm d}} \|_2
+\| \Gamma_1 \|+\| S^{({\rm d})} \|+N^{-\frac{1}{2}}
 \big)
 \Big)        \\
+O\left( 
 \| G_1 \|\Big( N^{-\frac{3}{4}}
 +\big(  \|  \widetilde{\mathcal{T}}_{{\rm d}} \|_2
+\| S^{({\rm d})} \|
 \big)^2+N^{-\frac{1}{4}}\big(  \|  \widetilde{\mathcal{T}}_{{\rm d}} \|_2
+\| S^{({\rm d})} \|
 \big)
  \Big)
 \right),
\end{multline}
where
\begin{equation}\label{R2 critical non0}
R_2=N^{-\frac{1}{4}}G_1
\sum_{\alpha=1}^t 
\frac{c_{\alpha}\overline{a_{\alpha}-z_0}}{2 f_{\alpha}^3}
H_{a_{\alpha}-z_0}=
\frac{1}{2}N^{-\frac{1}{4}} G_1 \big( 
P_1\hat{Z}^*+\overline{P}_2\hat{Z}
 \big),
\end{equation} 
and
\begin{multline}\label{R3 critical non0}
R_3=
\sum_{\alpha=2}^t 
\frac{f_1-f_{\alpha}}{2f_{\alpha}f_1}
 \overline{a_{\alpha}-z_0}
G_{1} \widetilde{\mathcal{T}}_{{\rm d,\alpha}}-\sum_{\alpha=1}^t  \frac{\overline{a_{\alpha}-z_0}c_{\alpha}}{2f_{\alpha}^3}  \Big(
\frac{1}{P_1} G_{1} S^{({\rm d})}  +\frac{N^{-\frac{1}{2}} }{4f_{\alpha}}
G_1 H_{a_{\alpha}-z_0}^2
\Big)\\
+
\sum_{\beta=2}^t N^{-\frac{1}{4}}
\sum_{\alpha=1}^t
\frac{\sqrt{c_{\alpha}}v_{\alpha,\beta}}{2f_{\alpha}^2}
G_{\beta} H_{a_{\alpha}-z_0},
\end{multline} 
where 
\begin{equation} \label{G2-t-sum} 
\|G\|_2:=\sum_{\beta=2}^t \| G_{\beta} \|. 
\end{equation}

Similarly, we can obtain \begin{equation}\label{S expansion critical non0}
S=S_2+S_3+O\Big(
 \| G \|_{2}^2+\| Q_{t+1} \|^2
 \Big),      
\end{equation}
where
\begin{equation}\label{S2 critical non0}
S_2=
P_1 G_1G_{1}^*+Q_0Q_0^*
\end{equation} 
and
\begin{equation}\label{S3 critical non0}
S_3=
\sum_{\alpha=1}^t \sum_{\beta=2}^t   \frac{\sqrt{c_{\alpha}}}{f_{\alpha}^2}\big(\overline{a_{\alpha}-z_0}
\overline{v}_{\alpha,\beta}G_1G_{\beta}^*+
(a_{\alpha}-z_0) 
v_{\alpha,\beta}G_{\beta}G_1^*\big).
\end{equation} 

As to the change of variables   \eqref{hat g2 critical non0},  the above analysis shows that the Jacobian determinant \begin{equation}
\det\Big(\frac{\partial \hat{G}_2}{\partial G_2}\Big)=1+
O\big(\| G \|+\|  \widetilde{\mathcal{T}}_{{\rm d}} \|_2+\|\Gamma_1\| +N^{-\frac{1}{4}}\big),
\end{equation}
which implies \begin{equation}
\det\Big(\frac{\partial G_2}{\partial \hat{G}_2}\Big)=1+
O\big(\| G \|+\|  \widetilde{\mathcal{T}}_{{\rm d}} \|_2+\|\Gamma_1\| +N^{-\frac{1}{4}}\big).
\end{equation}
Combining  
\begin{equation}
\begin{aligned}
&\| G_2 \|\leq \| \hat{G}_2 \|+O\big( 
\|  S^{({\rm off},{\rm u})} \|+\| R \| 
\big)  
=O\big( \Omega_{{\rm error},1} \big)+O(\sqrt{\delta})\| G_2 \|,
\end{aligned}
\end{equation}
where
\begin{equation}\label{OmegaError1}
\Omega_{{\rm error},1}=\| \hat{G}_2 \|+
\big(\| G_1 \|+N^{-\frac{1}{4}}\big)^2+\sum_{\beta=3}^t
\| G_{\beta} \|+ \|  \widetilde{\mathcal{T}}_{{\rm d}} \|_2 
+\| Q_{t+1} \|+\| Q_0 \|^2+\|\Gamma_1\|,
\end{equation} 
we get 
\begin{equation}\label{g2 OmegaError1}
\| G_2 \|=O\big( \Omega_{{\rm error},1} \big),
\end{equation}
and further 
\begin{equation}\label{partial g2}
\det\Big(\frac{\partial G_2}{\partial \hat{G}_2}\Big)=1+
O\Big( \| G_1 \|+\| \hat{G}_2 \|+
\sum_{\beta=3}^t
\| G_{\beta} \|+\|\widetilde{\mathcal{T}}_{{\rm d}} \|_2+
\| Q_{t+1} \|+\|\Gamma_1\|+N^{-\frac{1}{4}}\Big),
\end{equation}
\begin{equation}\label{S error term0}
\begin{aligned}
S%&=O\big( \| g \|^2+\| Q_{t+1} \|^2 \big) \\
&=O\Big( \| G_1 \|^2+\| \hat{G}_2 \|+
\sum_{\beta=3}^t
\| G_{\beta} \|+\|\widetilde{\mathcal{T}}_{{\rm d}} \|_2+
\| Q_{t+1} \|+\|\Gamma_1\| \Big).
\end{aligned}
\end{equation}

Finally, re-expressing the error terms we see from 
  \eqref{R expansion2 critical non0} and  \eqref{S expansion critical non0} that  \begin{equation}\label{R error term}
\begin{aligned}
R=R_2+R_3
+O\big( \| G_1 \|\big( N^{-\frac{3}{4}}
+\| \Gamma_1 \|
 \big)+\Omega_{{\rm error},1}^2 \big),
\end{aligned}
\end{equation}
and  
\begin{equation}\label{S error term1}
\begin{aligned}
S=S_2+S_3
+O\big(\Omega_{{\rm error},1}^2 \big).
\end{aligned}
\end{equation}

%%%%%%%%%%%%%%%%%%%%%%%%%%%%%%%%%%%%%%%%%%%%%%%%%%%%%%%%%%%%%%%%%%%%%%%

{\bf Step 4:  Calculating  the square summation  %$\sum_{1\leq i<j\leq n}$
 in  $h(\boldsymbol{T},\boldsymbol{Q})$}.  %\eqref{hQrewrite} 
 Recalling  \eqref{Talphaexpansion4 critical non0} and \eqref{change1},  we have for $1\leq i<j\leq n$,
\begin{multline}\label{Triangular1 critical non0}
\left(
\sum_{\alpha=1}^t a_{\alpha}T_{\alpha}T_{\alpha}^*
+Q_{t+1}A_{t+1}Q_{t+1}^*+z_0 Q_0Q_0^*
\right)_{i,j}
=\big(G_1+z_0\sqrt{P_1}G_2\big)_{i,j}
+\sum_{\alpha=1}^t \frac{a_{\alpha}}{f_{\alpha}}
\big(
\hat{T}_{{\rm u},\alpha}\hat{T}_{{\rm u},\alpha}^*
\big)_{i,j}
\\
+z_0\big( Q_0Q_0^* \big)_{i,j}
+\sum_{\alpha=1}^t
\frac{a_{\alpha}\sqrt{c_{\alpha}}}{f_{\alpha}} 
\left(
\hat{T}_{{\rm u},\alpha}
\Big(
\frac{f_{\alpha}}{2c_{\alpha}}\widetilde{T}_{{\rm d},\alpha}
-\frac{f_{\alpha}^2}{8c_{\alpha}^2}\widetilde{T}_{{\rm d},\alpha}^2
+O\big(
\|
\widetilde{T}_{{\rm d},\alpha}
\|^3
\big)\Big)
\right)_{i,j}
+O\big( \| Q_{t+1} \|^2 \big).
\end{multline} 
Since combination of  \eqref{hat g2 critical non0}, \eqref{R error term} and \eqref{S error term1}  gives rise to 
\begin{equation}\label{g2expansion2}
G_2=-\frac{1}{\sqrt{P_1}}
\big( R_2+R_3+S_2^{({\rm off},{\rm u})}
+S_3^{({\rm off},{\rm u})} \big)
+O\Big( \| \hat{G}_2 \|+
\| G_1 \|\big( N^{-\frac{3}{4}}+\| \Gamma_1 \| \big)+
\Omega_{{\rm error},1}^2 \Big),
\end{equation}
where $S_{2}^{({\rm off},{\rm u})}$ denotes the strictly upper triangular part of $S_2$, 
together with  \eqref{R2 critical non0} and \eqref{R3 critical non0},  we proceed as in a similar  way   to get   \eqref{R expansion2 critical non0} and   arrive at  
\begin{equation}\label{Triangular3 critical non0}
\begin{aligned}
&-z_0(R_2+R_3)+
\sum_{\alpha=1}^t
\frac{a_{\alpha}\sqrt{c_{\alpha}}}{f_{\alpha}}
\hat{T}_{{\rm u},\alpha}
\Big(
\frac{f_{\alpha}}{2c_{\alpha}}\widetilde{T}_{{\rm d},\alpha}
-\frac{f_{\alpha}^2}{8c_{\alpha}^2}\widetilde{T}_{{\rm d},\alpha}^2
+O\big(
\|
\widetilde{T}_{{\rm d},\alpha}
\|^3
\big)\Big)           \\
&=\frac{N^{-\frac{1}{4}}}{2}G_1 \sum_{\alpha=1}^t
\frac{c_{\alpha}}{f_{\alpha}^2} H_{a_{\alpha}-z_0}
-\frac{1}{2}G_1\sum_{\alpha=1}^t
\frac{c_{\alpha}}{P_1f_{\alpha}^2}S^{({\rm d})}
-\frac{N^{-\frac{1}{2}}}{8}G_1
\sum_{\alpha=1}^t
\frac{c_{\alpha}}{f_{\alpha}^3}H_{a_{\alpha}-z_0}^2
\\
&+N^{-\frac{1}{4}}\sum_{\beta=2}^t G_{\beta}
\sum_{\alpha=1}^t
\frac{\sqrt{c_{\alpha}}(a_{\alpha}-z_0)}{2f_{\alpha}^2}
H_{a_{\alpha}-z_0}v_{\alpha,\beta}
+O\Big(  
\| G_1 \|\big( N^{-\frac{3}{4}}+\| \Gamma_1 \| \big)
+\Omega_{{\rm error},1}^2
\Big)\\
&=-\frac{1}{2}G_{1}S^{({\rm d})}
-\frac{N^{-\frac{1}{2}}}{8}G_{1}
\big( P_2\big( \hat{Z}^* \big)^2
+2P_1\hat{Z}\hat{Z}^*+\overline{P}_2\hat{Z}^2 \big)
+\frac{N^{-\frac{1}{4}}}{2}\sqrt{P_1}G_2\hat{Z}           \\
&+\frac{N^{-\frac{1}{4}}}{2}
\sum_{\beta=2}^t \eta_{\beta} G_{\beta}\hat{Z}^*
+O\Big(  
\| G_1 \|\big( N^{-\frac{3}{4}}+\| \Gamma_1 \| \big)
+\Omega_{{\rm error},1}^2
\Big),
\end{aligned}
\end{equation}
where  in the second identity  we have used  $P_{0}(z_0)=0$,   \eqref{H2power} and  the  identities 
\begin{equation}\label{identitiesHeta}
\sum_{\alpha=1}^t 
\frac{\sqrt{c_{\alpha}}(a_{\alpha}-z_0)}{2f_{\alpha}^2}
H_{a_{\alpha}-z_0}v_{\alpha,\beta}=\frac{1}{2}
\big( \eta_{\beta}\hat{Z}^*
+\sqrt{P_1}\delta_{2,\beta}\hat{Z}
 \big)
\end{equation}
with \begin{equation}\label{eta beta}
\eta_{\beta}:=\sum_{\alpha=1}^t
\frac{\sqrt{c_{\alpha}}(a_{\alpha}-z_0)^2}{f_{\alpha}^2}
v_{\alpha,\beta}.
\end{equation} 
\eqref{identitiesHeta}

In fact,   \eqref{identitiesHeta}  immediately follows from the definition and unitary invariance of $V$, which further implies   
\begin{equation}    \label{etarelation}
\eta_{1}=0,\quad
\sqrt{P_1} \eta_{2} =P_2%P_1 |\eta_{2}|^2=|P_2|^2
, \quad \sum_{\alpha=1}^t |\eta_{\alpha}|^2=P_1.
\end{equation}

As to  the  remaining terms in \eqref{g2expansion2},  
it may be   a coincidence that  
\begin{multline}\label{Triangular6 critical non0}
-z_0\big(  S_2^{({\rm off},{\rm u})}+S_3^{({\rm off},{\rm u})} \big)
+\sum_{\alpha=1}^t 
\frac{a_{\alpha}}{f_{\alpha}}
\big( \hat{T}_{{\rm u},\alpha}\hat{T}_{{\rm u},\alpha}^* \big)
^{({\rm off},{\rm u})} 
+z_0\big( Q_0Q_0^* \big)^{({\rm off},{\rm u})} 
          \\
=\sqrt{P_1}\big( G_1G_2^* \big)^{({\rm off},{\rm u})}
+\sum_{\beta=2}^t \eta_{\beta}
\big( G_{\beta}G_1^* \big)^{({\rm off},{\rm u})}
+O\big( \Omega_{{\rm error},1}^2 \big)
\end{multline}
after some overlaps are cancelled  out.

Finally, combining \eqref{Triangular1 critical non0}, \eqref{g2expansion2},  \eqref{Triangular3 critical non0} and  \eqref{Triangular6 critical non0},  for $1\leq i<j\leq n$ we derive 
\begin{equation}\label{Triangular7 critical non0}
\begin{aligned}
&\Big( \sum_{\alpha=1}^t 
a_{\alpha}T_{\alpha}T_{\alpha}^*+Q_{t+1}A_{t+1}Q_{t+1}^*
+z_0 Q_0Q_0^*
\Big)_{i,j}=\big(G_1+ \sqrt{P_1}G_1G_2^* \big)_{i,j}
+\sum_{\beta=2}^t \eta_{\beta} \big( G_{\beta} G_1^* \big)_{i,j}
\\
&-\frac{1}{2}\big(
G_1S^{({\rm d})} \big)_{i,j}-\frac{1}{8}
\big( G_1
\big( P_2\big( \hat{Z}^* \big)^2
+2P_1\hat{Z}\hat{Z}^*+\overline{P}_2\hat{Z}^2 \big)
 \big)_{i,j}+\frac{N^{-\frac{1}{4}}\sqrt{P_1}}{2}
 \big( G_2\hat{Z} \big)_{i,j}
 \\
 &+\frac{N^{-\frac{1}{4}}}{2}
 \sum_{\beta=2}^t \eta_{\beta}
 \big( G_{\beta}\hat{Z}^* \big)_{i,j}
+O\Big(  
\| G_1 \|\big( N^{-\frac{3}{4}}+\| \Gamma_1 \| \big)
+\Omega_{{\rm error},1}^2
\Big).
\end{aligned}
\end{equation}
So  in the trace form  this yields 
\begin{equation}\label{Triangular8 critical non0}
\begin{aligned}
&\sum_{1\leq i<j\leq n}\left|
\Big( \sum_{\alpha=1}^t 
a_{\alpha}T_{\alpha}T_{\alpha}^*+Q_{t+1}A_{t+1}Q_{t+1}^*
+z_0 Q_0Q_0^*
\Big)_{i,j}
\right|^2=         \\
&{\rm Tr}(G_1G_1^*)+
\sqrt{P_1}{\rm Tr}\big(G_1(G_2+G_2^*)G_1^*\big)
+\sum_{\beta=2}^t \overline{\eta}_{\beta}
{\rm Tr}\Big(\big( G_1^2+\frac{1}{2}N^{-\frac{1}{4}}
G_1\hat{Z} \big)G_{\beta}^*\Big)           \\
&+\sum_{\beta=2}^t \eta_{\beta}
{\rm Tr}\Big(G_{\beta}\big( G_1^2+\frac{ 1}{2}N^{-\frac{1}{4}}
G_1\hat{Z} \big)^*\Big)
+\frac{ 1}{2}N^{-\frac{1}{4}}\sqrt{P_1}\Big(
{\rm Tr}\big(G_1\hat{Z}^* G_2^*\big)
+{\rm Tr}\big(G_2\hat{Z} G_1^*\big)
\Big)
     \\
&-{\rm Tr}
\big( G_1S^{({\rm d})}G_1^* \big) 
-\frac{1}{4}N^{-\frac{1}{2}}{\rm Tr}\Big(
G_1\big( P_2\big( \hat{Z}^* \big)^2
+2P_1\hat{Z}\hat{Z}^*+\overline{P}_2\hat{Z}^2 \big)
G_1^*
\Big)      \\
&+O\Big(
\| G_1 \|^2\| \Gamma_1 \|+
 \big( \| G_1 \|+N^{-\frac{1}{4}} \big)
 \Omega_{{\rm error},1}^2+\big( \| G_1 \|+N^{-\frac{1}{4}} \big)^4
 \Omega_{{\rm error},1}
\Big).
\end{aligned}
\end{equation}

To simplify \eqref{Triangular8 critical non0} further,  using  the notations in \eqref{R expansion2 critical non0}-\eqref{S3 critical non0},  put 
\begin{equation}\label{K critical non0}
K=S_2+R_2+R_2^*.
\end{equation}
Note that it follows  from \eqref{hat g2 critical non0}, \eqref{R error term} and \eqref{S error term1} that \begin{equation}\label{g2expansion}
G_2=-\frac{1}{\sqrt{P_1}}
\big( R_2+S_2^{({\rm off},{\rm u})} \big)
+O\big( \Omega_{{\rm error},2} \big),
\end{equation}
where
\begin{equation}\label{Omega error2}
\begin{aligned}
\Omega_{{\rm error},2}=\| \hat{G}_2 \|+&
\Big( \sum_{\beta=3}^t\| G_{\beta} \|+
\big( \| G_1 \|+N^{-\frac{1}{4}} \big)^2+
\|\widetilde{\mathcal{T}}_{{\rm d}}\|_{2}+\| \Gamma_1 \|
+\| Q_{t+1} \|+\| Q_0 \|^2
 \Big)
 \\
&\times \Big( \sum_{\beta=3}^t\| G_{\beta} \|+
\| G_1 \|+N^{-\frac{1}{4}}+
\|\widetilde{\mathcal{T}}_{{\rm d}}\|_{2}+\| \Gamma_1 \|
+\| Q_{t+1} \|+\| Q_0 \|^2
 \Big),
\end{aligned}
\end{equation}
we see  from \eqref{S error term1} that 
\begin{multline}\label{Triangular9 critical non0}
\sqrt{P_1}{\rm Tr}\big(G_1(G_2+G_2^*)G_1^*\big)
-{\rm Tr}
\big( G_1S^{({\rm d})}G_1^* \big) \\
=-{\rm Tr}(G_1^*G_1K)
+O\Big(
\| G_1 \|^2\big( 
\Omega_{{\rm error},2}+\Omega_{{\rm error},1}^2+
\| G_1 \|\Omega_{{\rm error},1}
 \big)
 \Big),
\end{multline}
\begin{multline}\label{Triangular10 critical non0}
\frac{1}{2}N^{-\frac{1}{4}}\sqrt{P_1}\Big(
{\rm Tr}\big(G_1\hat{Z}^* G_2^*\big)
+{\rm Tr}\big(G_2\hat{Z} G_1^*\big)
\Big)         \\
=-\frac{1}{2}N^{-\frac{1}{4}}
{\rm Tr}\big((G_1\hat{Z}^*+\hat{Z} G_1^*) K\big)+O\big( N^{-\frac{1}{4}}
\| G_1 \|\Omega_{{\rm error},2}
 \big),
\end{multline}
\begin{multline}\label{Triangular11 critical non0}
\overline{\eta}_2{\rm Tr}
\Big(\big( G_1^2+\frac{1}{2}N^{-\frac{1}{4}}
G_1\hat{Z} \big)G_2^*\Big)+{\eta}_2{\rm Tr}
\Big(G_2\big( G_1^2+\frac{1}{2}N^{-\frac{1}{4}}
G_1\hat{Z} \big)^*\Big)
=
\\
-\frac{\overline{P}_2}{P_1}{\rm Tr}
\Big(\big( G_1^2+\frac{N^{-\frac{1}{4}}}{2}
G_1\hat{Z} \big)K\Big)-\frac{P_2}{P_1}{\rm Tr}
\Big(K\big( G_1^2+\frac{N^{-\frac{1}{4}}}{2}
G_1\hat{Z} \big)^*\Big)
+O\Big( \big(N^{-\frac{1}{4}}+
\| G_1 \|\big)^2\Omega_{{\rm error},2}
 \Big),
\end{multline}
 and  by adding partial sum on the RHS of  \eqref{unitarity critical non0}, in combination with \eqref{Tr hatZ Sd6},
\begin{multline}\label{Triangular13 critical non0}
\sum_{\beta=3}^t {\rm Tr}\Big(-G_{\beta}G_{\beta}^*
+\overline{\eta}_{\beta}
\big( 
(\hat{Z}G_1+\frac{G_1\hat{Z}}{2}
)N^{-\frac{1}{4}}+
G_1^2\big)G_{\beta}^*
+\eta_{\beta}
G_{\beta}\big( 
(\hat{Z}G_1+\frac{G_1\hat{Z}}{2}
)N^{-\frac{1}{4}}+
G_1^2\big)^*\Big)
\\
=-\sum_{\beta=3}^t {\rm Tr}\Big(
G_{\beta}-\overline{\eta}_{\beta}
\Big(
\big( \hat{Z}G_1+\frac{G_1\hat{Z}}{2} \big)N^{-\frac{1}{4}}
+G_1^2
\Big)
\Big)\Big(
G_{\beta}-\overline{\eta}_{\beta}
\Big(
\big( \hat{Z}G_1+\frac{G_1\hat{Z}}{2} \big)N^{-\frac{1}{4}}
+G_1^2
\Big)
\Big)^*     
 \\
+\big( P_1-\frac{|P_2|^2}{P_1} \big)
{\rm Tr}\Big(
\big( \hat{Z}G_1+\frac{G_1\hat{Z}}{2} \big)N^{-\frac{1}{4}}
+G_1^2
\Big)\Big(
\big( \hat{Z}G_1+\frac{G_1\hat{Z}}{2} \big)N^{-\frac{1}{4}}
+G_1^2
\Big)^*.
\end{multline}

%%%%%%%%%%%%%%%%%%%%%%%%%%%%%%%%%%%%%%%%%%%%%%%%%%%%%%%%%%%%%%%%%%%%%%%%%%%%%%%%%%%%%%%%%%%%%%%%%%%%%%%%%%%%%%%%%%%%%%%%%%%%%%%%%%%%%%%%%%%%%%%%%%%%%%%%%%%%%%%%%%%%%%%%%%%%%%%%%%%%%%%%%%%%%%%%%%%%%%%%%%%%%%%%%%%%%%%%%
%{\bf Step 5: Calculating the other terms in $h(T,Q)$}. \eqref{hQrewrite} With \eqref{S error term1} and \eqref{K critical non0}, continuing from \eqref{SP2 hatZ},

With  \eqref{g hatT connection1} in mind, 
 starting  from \eqref{Tr hatZ Sd1} we have 
\begin{multline}\label{Tr hatZ Sd2}
N^{-\frac{1}{4}}\sum_{\alpha=1}^t
\frac{a_{\alpha}-z_0}{f_{\alpha}}
{\rm Tr}\big( \hat{Z}^*
\hat{T}_{{\rm u},\alpha}\hat{T}_{{\rm u},\alpha}^* \big)=
N^{-\frac{1}{4}}\sum_{\alpha=1}^t
\frac{a_{\alpha}-z_0}{f_{\alpha}}|v_{\alpha,1}|^2
{\rm Tr}\big( \hat{Z}^*G_1G_1^* \big)          \\
+N^{-\frac{1}{4}}\sum_{\alpha=1}^t
\frac{a_{\alpha}-z_0}{f_{\alpha}}v_{\alpha,1}
\sum_{\gamma=2}^t \overline{v}_{\alpha,\gamma}
{\rm Tr}\big( \hat{Z}^*G_1 G_{\gamma}^* \big)  \\
+N^{-\frac{1}{4}}\sum_{\alpha=1}^t
\frac{a_{\alpha}-z_0}{f_{\alpha}}\overline{v}_{\alpha,1}
\sum_{\beta=2}^t v_{\alpha,\beta}
{\rm Tr}\big( \hat{Z}^*G_{\beta}G_1^* \big) 
+O\big( N^{-\frac{1}{4}} \| G\|_{2}^2 \big)
\\
=N^{-\frac{1}{4}}
\sqrt{P_1}{\rm Tr}\big( \hat{Z}^*G_1 G_2^* \big) 
+N^{-\frac{1}{4}}\sum_{\beta=2}^t
\eta_{\beta}{\rm Tr}\big( \hat{Z}^*G_{\beta} G_1^* \big) 
+O\big( N^{-\frac{1}{4}}
\Omega_{{\rm error},1}^2 \big),
\end{multline}
where we have used the facts 
\begin{equation}\label{Tr hatZ Sd mid}
\begin{aligned}
%&\sum_{\alpha=1}^t
%\frac{a_{\alpha}-z_0}{f_{\alpha}}|v_{\alpha,1}|^2=P_0=0,
%\\
\sum_{\alpha=1}^t 
\frac{a_{\alpha}-z_0}{f_{\alpha}}
v_{\alpha,1}\overline{v}_{\alpha,\gamma}
%&=
%\sum_{\alpha=1}^t \frac{\sqrt{c_{\alpha}}}{f_{\alpha}}
%\overline{v}_{\alpha,\gamma}
=\sqrt{P_1}\sum_{\alpha=1}^t
v_{\alpha,2}\overline{v}_{\alpha,\gamma}
=\sqrt{P_1}\delta_{2,\gamma},
\end{aligned}
\end{equation}
and
\begin{equation}\label{sum gbeta error}
  \| G \|_2=O\big( 
\Omega_{{\rm error},1}
 \big).
\end{equation}

Use of  \eqref{K critical non0} and \eqref{g2expansion}  immediately yields 
\begin{equation}\label{Tr hatZ Sd4}
N^{-\frac{1}{4}}\sqrt{P_1}
{\rm Tr}\big( \hat{Z}^*G_1 G_2^* \big)=
-N^{-\frac{1}{4}}
{\rm Tr}\big( \hat{Z}^*G_1 K \big)+
O\big( N^{-\frac{1}{4}}\| G_1 \|\Omega_{{\rm error},2} \big),
\end{equation}
and 
\begin{multline}\label{Tr hatZ Sd5}
N^{-\frac{1}{4}}\sum_{\beta=2}^t
\eta_{\beta}{\rm Tr}\big( \hat{Z}^*G_{\beta}G_1^* \big) 
=-N^{-\frac{1}{4}}\frac{P_2}{P_1}
{\rm Tr}\big( \hat{Z}^*K G_1^* \big)    +N^{-\frac{1}{4}}\sum_{\beta=3}^t \eta_{\beta}
{\rm Tr}\big( \hat{Z}^*G_{\beta} G_1^* \big) 
\\
+O\big( N^{-\frac{1}{4}}\| G_1 \|\Omega_{{\rm error},2} \big),
\end{multline}
 from which 
 we have
\begin{multline}\label{Tr hatZ Sd6}
N^{-\frac{1}{4}}\sum_{\alpha=1}^t
\frac{a_{\alpha}-z_0}{f_{\alpha}}
{\rm Tr}\big( \hat{Z}^*
\hat{T}_{{\rm u},\alpha}\hat{T}_{{\rm u},\alpha}^* \big)=
-N^{-\frac{1}{4}}
{\rm Tr}\big( \hat{Z}^* G_1 K \big)-N^{-\frac{1}{4}}\frac{P_2}{P_1}
{\rm Tr}\big( \hat{Z}^*K G_1^* \big)         
     \\
+N^{-\frac{1}{4}}\sum_{\beta=3}^t \eta_{\beta}
{\rm Tr}\big( \hat{Z}^*G_{\beta} G_1^* \big) 
+O\Big( N^{-\frac{1}{4}}\big(
\| G_1 \|\Omega_{{\rm error},2}+\Omega_{{\rm error},1}^2
 \big)\Big).
\end{multline}
Hence  combination of  \eqref{unitarity critical non0},   \eqref{g2expansion} and \eqref{K critical non0} gives rise to 
\begin{equation}\label{g2expansion1}
\begin{aligned}
-{\rm Tr}
\big( G_2 G_2^* \big)=-\frac{1}{2P_1}{\rm Tr}\Big( 
\big(  S_2^{({\rm off},{\rm u})}+R_2  \big)
K\Big)
-\frac{1}{2P_1}{\rm Tr}\Big( 
\big(  {S_2^{({\rm off},{\rm u})}}^*+R_2^*  \big)
K\Big)      \\
+O\Big( 
\big( \| G_1 \|+N^{-\frac{1}{4}} \big)^2\Omega_{{\rm error},2}
+\Omega_{{\rm error},2}^2
\Big).
\end{aligned}
\end{equation}
We also see from 
  \eqref{bi A bit3 critical non0} and \eqref{S error term1} that
\begin{equation}\label{Triangular14 critical non0}
-\frac{1}{2P_1}{\rm Tr}\big( S^{({\rm d})} \big)^2
=-\frac{1}{2P_1}{\rm Tr}\big( S_2^{({\rm d})}K \big)
 S_2^{({\rm d})} \big)^2
+O\Big(
\big( \Omega_{{\rm error},1}^2
+\| G_1 \|\Omega_{{\rm error},1} \big)^2
+\| G_1 \|^3\Omega_{{\rm error},1}
\Big).
\end{equation} 
So putting  the  $G_2$ term in \eqref{unitarity critical non0} and   the last term in  \eqref{bi A bit3 critical non0},     we   combine \eqref{g2expansion1} and  obtain 
\begin{multline}\label{Triangular15 critical non0}
-{\rm Tr}(G_2G_2^*)-\frac{1}{2P_1}
{\rm Tr}\big( S^{({\rm d})} \big)^2
=
-\frac{1}{2}{\rm Tr}(K^2)+O\Big(
\Omega_{{\rm error},2}\big( 
N^{-\frac{1}{4}}+\| G_1 \|
 \big)^2+\Omega_{{\rm error},2}^2
\Big)       
 \\
+O\Big(
\big( 
\Omega_{{\rm error},1}^2+
\| G_1 \|\Omega_{{\rm error},1}
 \big)^2+\| G_1 \|^3\Omega_{{\rm error},1}
\Big).
\end{multline}
Also,  we can rewrite   one term of  the RHS of \eqref{Talphaexpansion12 critical non0} as 
\begin{multline}\label{SP2 hatZ extension}
\frac{1}{P_1} N^{-\frac{1}{2}} {\rm Tr}
\Big(
S\big( P_2(\hat{Z}^*)^2
+2P_1\hat{Z}\hat{Z}^*+\overline{P}_2\hat{Z}^2 \big)
\Big)=
\\
\frac{1}{P_1}{\rm Tr}
\Big(
K\big( P_2(\hat{Z}^*)^2
+2P_1\hat{Z}\hat{Z}^*+\overline{P}_2\hat{Z}^2 \big)
\Big)+N^{-\frac{1}{2}}O\big(
\Omega_{{\rm error},1}^2+\| G_1 \|\Omega_{{\rm error},1}
\big).
\end{multline}

%%%%%%%%%%%%%%%%%%%%%%%%%%%%%%%%%%%%%%%%%%%%%%%%%%%%%%%%%%%%%%%%%%%%%%%%%%%%%%%%%%%%%%%%%%%%%%%%%%%%%%%%%%%%%%%%%%%%%%%%%%%%%%%%%%%%%%%%%%%%%%%%%%%%%%%%%%%%%%%%%%%%%%%%%%%%%%%%%%%%%%%%%%%%%%%%%%%%%%%%%%%%%%%%%%%%%%%%%%%%%%
{\bf Step 6: Effective terms in   $\tau^{-1}h(\boldsymbol{T},\boldsymbol{Q})$}.

Recalling $h(\boldsymbol{T},\boldsymbol{Q})$ in \eqref{hQrewrite} and  $1/\tau$ in \eqref{tau inverse expansion},  we  focus on all effective terms in $\hat{\tau}N^{-\frac{1}{2}}h(\boldsymbol{T},\boldsymbol{Q})$ and $\hat{\tau}^2 N^{-1}h(\boldsymbol{T},\boldsymbol{Q})$.

First, combining  \eqref{Tdalpha1 critical non0}, \eqref{g hatT connection1},
\eqref{OmegaError1} and \eqref{sum gbeta error}, we obtain
\begin{multline}\label{tau hQ expansion1}
-\sum_{\alpha=1}^t f_{\alpha}{\rm Tr}(T_{\alpha}T_{\alpha}^*)
=-\sum_{\alpha=1}^t 
\Big( 
nc_{\alpha}
+f_{\alpha}{\rm Tr}\big( \widetilde{T}_{{\rm d},\alpha} \big)
+{\rm Tr}\big( \hat{T}_{{\rm u},\alpha}\hat{T}_{{\rm u},\alpha}^* \big)
 \Big)
\\
=-n-\sum_{\alpha=1}^t 
f_{\alpha}{\rm Tr}\big( \widetilde{T}_{{\rm d},\alpha} \big)
-{\rm Tr}(G_1G_1^*)+O\big( \Omega_{{\rm error},1}^2 \big).
\end{multline}
Note that 
 by   \eqref{S critical non0} we have 
\begin{multline}\label{tau hQ expansion3}
{\rm Tr}(S)=\sum_{\alpha=1}^t \frac{1}{f_{\alpha}}
{\rm Tr}\big( \hat{T}_{{\rm u},\alpha}\hat{T}_{{\rm u},\alpha}^* \big)
+{\rm Tr}\big( Q_0Q_0^* \big)
+O\big( \| Q_{t+1} \|^2 \big)              \\
=P_1{\rm Tr}\big( G_1G_1^* \big)+{\rm Tr}\big( Q_0Q_0^* \big)+O\big( 
\| G_1 \|\Omega_{{\rm error},1}+\Omega_{{\rm error},1}^2
 \big),
\end{multline}
 and  further get  from   \eqref{matrix transformations1 critical non0} and \eqref{newT-3} that 
\begin{multline}\label{tau hQ expansion2}
\sum_{\alpha=1}^t 
f_{\alpha}{\rm Tr}\big( \widetilde{T}_{{\rm d},\alpha} \big)
=f_1{\rm Tr}\big( 
-\sum_{\alpha=2}^t \widetilde{T}_{\rm d,\alpha}-S^{({\rm d})}
 \big)+\sum_{\alpha=2}^t f_{\alpha}{\rm Tr}(\widetilde{T}_{\rm d,\alpha})
 +O(\| \Gamma_1 \|)
 \\
=\sum_{\alpha=2}^t (f_{\alpha}-f_1)
{\rm Tr}\big(\widetilde{\mathcal{T}}_{\rm d,\alpha}\big)-\frac{1}{P_1}
{\rm Tr}(S)+N^{-\frac{1}{4}}\sum_{\alpha=1}^t
\frac{c_{\alpha}}{f_{\alpha}}
{\rm Tr}\big(H_{a_{\alpha}-z_0}\big)
+O(\| \Gamma_1 \|).
\\
=\sum_{\alpha=2}^t (f_{\alpha}-f_1)
{\rm Tr}\big(\widetilde{\mathcal{T}}_{\rm d,\alpha}\big)
-{\rm Tr}\big(G_1G_1^*\big)-\frac{1}{P_1}{\rm Tr}\big( Q_0Q_0^* \big)
+N^{-\frac{1}{4}}\sum_{\alpha=1}^t
\frac{c_{\alpha}}{f_{\alpha}}
{\rm Tr}\big(H_{a_{\alpha}-z_0}\big)
\\
+O\big( \| \Gamma_1 \|+
\| G_1 \|\Omega_{{\rm error},1}+\Omega_{{\rm error},1}^2
 \big).
\end{multline}

It's easy to see from \eqref{S critical non0} and \eqref{matrix transformations1 critical non0}  that 
\begin{multline}\label{tau hQ expansion4}
|z_0|^2\Big(
\sum_{\alpha=1}^t {\rm Tr}\big(T_{\alpha}T_{\alpha}^*\big)
+{\rm Tr}\big(Q_{t+1}Q_{t+1}^*\big)+{\rm Tr}\big( Q_0Q_0^* \big)
\Big)
=|z_0|^2\big(n+
\sum_{\alpha=1}^t {\rm Tr}
\big(\widetilde{T}_{{\rm d},\alpha}\big)
+{\rm Tr}(S)
\big)        
\\
=n|z_0|^2+O(\| \Gamma_1 \|),
\end{multline}
\begin{multline}\label{tau hQ expansion5}
N^{-\frac{1}{4}}\sum_{\alpha=1}^t\Big(
\overline{a}_{\alpha}{\rm Tr}\big(
\hat{Z}
T_{\alpha}T_{\alpha}^*\big)
+a_{\alpha}{\rm Tr}\big(
\hat{Z}^*
T_{\alpha}T_{\alpha}^*\big)
\Big)=N^{-\frac{1}{4}}\sum_{\alpha=1}^t
\frac{c_{\alpha}}{f_{\alpha}} {\rm Tr}(H_{a_\alpha})
\\+
N^{-\frac{1}{4}}\sum_{\alpha=1}^t
{\rm Tr}
\big( H_{a_\alpha}\widetilde{T}_{{\rm d},\alpha} \big)
+O\Big( 
N^{-\frac{1}{4}}
\big( \| G_1 \|+\Omega_{{\rm error},1} \big)^2
 \Big),
\end{multline}
and
\begin{multline*}
N^{-\frac{1}{4}}\sum_{\alpha=1}^t
{\rm Tr}
\big( H_{\alpha}\widetilde{T}_{{\rm d},\alpha} \big)=
N^{-\frac{1}{4}}{\rm Tr}
\Big( H_1
\big( -\sum_{\alpha=2}^t \widetilde{T}_{{\rm d},\alpha}-S^{({\rm d})} \big) \Big)
+N^{-\frac{1}{4}}\sum_{\alpha=2}^t {\rm Tr}
(H_{a_\alpha}\widetilde{T}_{{\rm d},\alpha})+O\big( N^{-\frac{1}{4}}\| \Gamma_1 \| \big)
\\
=N^{-\frac{1}{2}}\sum_{\alpha=1}^t
\frac{c_{\alpha}}{f_{\alpha}^2}  
{\rm Tr}\big( H_{a_\alpha}H_{a_\alpha-z_0} \big)
+O\Big( 
N^{-\frac{1}{4}}
\big( \| \widetilde{\mathcal{T}}_{{\rm d}} \|_2+
\| \Gamma_1 \|+
 \| G_1 \|^2+\Omega_{{\rm error},1}^2
 \big)
 \Big).
\end{multline*}
By $P_{0}(z_0)=0$ we have  
$$
\sum_{\alpha=1}^t
\frac{c_{\alpha}}{f_{\alpha}^2}  
{\rm Tr}\big( H_{a_\alpha} H_{a_\alpha-z_0} \big)
=
\sum_{\alpha=1}^t
\frac{c_{\alpha}}{f_{\alpha}^2}  
{\rm Tr}\big( H_{a_\alpha-z_0} \big)^2
$$
from which 
\begin{equation}\label{tau hQ expansion6}
\begin{aligned}
&N^{-\frac{1}{4}}\sum_{\alpha=1}^t\Big(
\overline{a}_{\alpha}{\rm Tr}\big(
\hat{Z}
T_{\alpha}T_{\alpha}^*\big)
+a_{\alpha}{\rm Tr}\big(
\hat{Z}^*
T_{\alpha}T_{\alpha}^*\big)
\Big)=N^{-\frac{1}{4}}\sum_{\alpha=1}^t
\frac{c_{\alpha}}{f_{\alpha}} {\rm Tr}(H_{a_\alpha})
\\&+
N^{-\frac{1}{2}}\sum_{\alpha=1}^t
\frac{c_{\alpha}}{f_{\alpha}^2}  
{\rm Tr}\big( H_{a_\alpha-z_0} \big)^2
+O\Big( 
N^{-\frac{1}{4}}
\big( 
 \| \widetilde{\mathcal{T}}_{{\rm d}} \|_2+\| \Gamma_1 \|+\| G_1 \|^2
+\Omega_{{\rm error},1}^2 \big)
 \Big).
\end{aligned}
\end{equation}
By \eqref{Triangular8 critical non0}, we have
\begin{multline}\label{tau hQ expansion7}
\sum_{1\leq i<j\leq n}\Big|
\Big( \sum_{\alpha=1}^t 
a_{\alpha}T_{\alpha}T_{\alpha}^*+Q_{t+1}A_{t+1}Q_{t+1}^*
\Big)_{i,j}
\Big|^2= {\rm Tr}(G_1G_1^*)      
  \\
+O\Big(
\big( \| G_1 \|+N^{-\frac{1}{4}} \big)^2
\big( \Omega_{{\rm error},1}
+\| G_1 \|^2+\| \Gamma_1 \| \big)
+\big( \| G_1 \|+N^{-\frac{1}{4}} \big)\Omega_{{\rm error},1}^2
\Big).
\end{multline}

Therefore,  combination of  \eqref{tau hQ expansion1}, \eqref{tau hQ expansion2}, %\eqref{tau hQ expansion3}, 
\eqref{tau hQ expansion4}, \eqref{tau hQ expansion6} and \eqref{tau hQ expansion7} gives rise to
\begin{multline}\label{tau hQ expansion8}
-\hat{\tau}N^{-\frac{1}{2}}
h(\boldsymbol{T},\boldsymbol{Q})=n\hat{\tau}N^{-\frac{1}{2}}
-n\hat{\tau}|z_0|^2 N^{-\frac{1}{2}}
-\hat{\tau}N^{-\frac{3}{4}}
\big( z_0{\rm Tr}\big( \hat{Z}^* \big)
 +\overline{z}_0{\rm Tr}\big( \hat{Z} \big) \big)
 \\
 -\hat{\tau}N^{-1}
 \sum_{\alpha=1}^t 
 \frac{c_{\alpha}}{f_{\alpha}^2}
 {\rm Tr}\big( H_{a_\alpha-z_0} \big)^2  
 -\hat{\tau}N^{-\frac{1}{2}}{\rm Tr}( G_1G_1^* )
 +\hat{\tau}N^{-\frac{1}{2}}\sum_{\alpha=2}^t
 (f_{\alpha}-f_1){\rm Tr}\big( \widetilde{\mathcal{T}}_{{\rm d,\alpha}} \big)
 \\
 -\frac{\hat{\tau}}{P_1}N^{-\frac{1}{2}}{\rm Tr}\big( Q_0Q_0^* \big)
 +N^{-\frac{1}{2}}O\Big( 
 \Omega_{{\rm error},1}^2+
 \big( \|G_1\|+N^{-\frac{1}{4}} \big)\Omega_{{\rm error},1}
 +\|\Gamma_1\|
  \Big)
\end{multline}
and
\begin{equation}\label{tau hQ expansion9}
\hat{\tau}^2 N^{-1}
h(\boldsymbol{T},\boldsymbol{Q})=-n\hat{\tau}^2 N^{-1}+n|z_0|^2\hat{\tau}^2 N^{-1}
+N^{-1}O\big( 
\| G_1 \|^2+ \| \widetilde{\mathcal{T}}_{{\rm d}} \|_2+\| \Gamma_1 \|+
\Omega_{{\rm error},1}^2+N^{-\frac{1}{4}}
 \big).
\end{equation}

Finally, 
combining \eqref{Talphaexpansion1 critical non0}, \eqref{Talphaexpansion6 critical non0}, %\eqref{Talphaexpansion5 critical non0}, 
%\eqref{Talphaexpansion8 critical non0}, \eqref{bi A bit3 critical non0}, 
%\eqref{Tr hatZ Sd1}, \eqref{tilde xi bi}, \eqref{Talphaexpansion12 critical non0}, 
\eqref{2powerterm}, \eqref{Talphaexpansion12 critical non0},
%\eqref{SP2 hatZ},
 \eqref{Talphaexpansion13 critical non0}, \eqref{Triangular8 critical non0}-\eqref{Triangular13 critical non0}, %\eqref{Tr hatZ Sd6}-\eqref{unitarity critical non0}, 
 \eqref{Tr hatZ Sd6}, \eqref{Triangular15 critical non0}, \eqref{SP2 hatZ extension}, \eqref{tau hQ expansion8} and  \eqref{tau hQ expansion9}, we have
\begin{small}
\begin{equation}\label{tau hQ expansion10}
\begin{aligned}
&\frac{1}{\tau} h(\boldsymbol{T},\boldsymbol{Q})+\sum_{\alpha=1}^t 
c_{\alpha}{\rm Tr}\log (T_{{\rm d},\alpha})=
n\sum_{\alpha=1}^t c_{\alpha}\big(
\log \frac{c_{\alpha}}{f_{\alpha}}-1
\big)+n|z_0|^2    
+N^{-\frac{1}{4}}\sum_{\alpha=1}^t
\frac{c_{\alpha}}{f_{\alpha}}{\rm Tr}(H_{a_\alpha})
\\
&+\frac{N^{-\frac{1}{2}}}{2}\sum_{\alpha=1}^t 
\frac{c_{\alpha}}{f_{\alpha}^2} 
\Big( (a_{\alpha}-z_0)^2{\rm Tr}\big( \hat{Z}^* \big)^2 
+\overline{a_{\alpha}-z_0}^2{\rm Tr}\big( \hat{Z}^2 \big)
\Big)
+N^{-\frac{1}{2}} {\rm Tr}\big( \hat{Z}\hat{Z}^* \big)
\\
&+\frac{N^{-\frac{3}{4}}}{3}\sum_{\alpha=1}^t
\frac{c_{\alpha}}{f_{\alpha}^3}\Big(
(a_{\alpha}-z_0)^3{\rm Tr}\big( \hat{Z}^* \big)^3+
\overline{a_{\alpha}-z_0}^3{\rm Tr}\big( \hat{Z}^3 \big)
\Big)
-\frac{N^{-1}}{4}\sum_{\alpha=1}^t
\frac{c_{\alpha}}{f_{\alpha}^4}{\rm Tr}\big(
H_{a_\alpha-z_0}^4
 \big)                \\
& +(1-|z_0|^2)n\hat{\tau}N^{-\frac{1}{2}}
-\hat{\tau}N^{-\frac{3}{4}}
\big( z_0{\rm Tr}\big( \hat{Z}^* \big)
 +\overline{z}_0{\rm Tr}\big( \hat{Z} \big) \big)
 -\hat{\tau}N^{-1}
\sum_{\alpha=1}^t 
 \frac{c_{\alpha}}{f_{\alpha}^2}
 {\rm Tr}\big( H_{a_\alpha-z_0} \big)^2  
 \\
 &+(|z_0|^2-1)n\hat{\tau}^2 N^{-1}+ 
 |z_0|^2{\rm Tr}(\Gamma_1)     -{\rm Tr}\Big( (X-\frac{1}{2}B\Sigma^{-1})\Sigma (X-\frac{1}{2}B\Sigma^{-1})^t\Big)      \\
 &+
 N^{-\frac{1}{2}} {\rm Tr} \left(
\sum_{\alpha=2}^t  \widetilde{\mathcal{T}}_{\rm d,\alpha}
\Big(
\frac{1}{f_{\alpha}} H_{a_{\alpha}-z_0} ^2
-\frac{1}{f_{1}} H_{a_{1}-z_0} ^2+\hat{\tau}(f_{\alpha}-f_1)
\Big)\right)-\frac{\hat{\tau}}{P_1}N^{-\frac{1}{2}}{\rm Tr}\big( Q_0Q_0^* \big)
\\
 &-\sum_{\alpha=3}^t {\rm Tr}\Big(
 G_{\alpha}-\overline{\eta}_{\alpha}\Big(
 N^{-\frac{1}{4}}\big( \hat{Z}G_1+\frac{G_1\hat{Z}}{2} \big)
 +G_1^2
 \Big)
 \Big)\Big(
 G_{\alpha}-\overline{\eta}_{\alpha}\Big(
 N^{-\frac{1}{4}}\big( \hat{Z}G_1+\frac{G_1\hat{Z}}{2} \big)
 +G_1^2
 \Big)
 \Big)^*            \\
 &
 -{\rm Tr}\Big(\widetilde{K}_2(G_1,Q_0)
 \big( \frac{1}{2}\widetilde{K}_2(G_1,Q_0)+\widetilde{K}_1(G_1) \big)
 \Big)            -{\rm Tr}\Big( 
 Q_{t+1}\big(z_0\mathbb{I}_{r_{t+1}}-A_{t+1}\big)^*
 \big(z_0\mathbb{I}_{r_{t+1}}-A_{t+1}\big)Q_{t+1}^*
 \Big)    \\
 &+{\rm Tr}\big( \widetilde{K}_3(G_1)\big)+O\Big(
\big( \|G_1\|+N^{-\frac{1}{4}} \big)\Omega_{{\rm error},1}^2
+\big( \|G_1\|+N^{-\frac{1}{4}} \big)^2 \Omega_{{\rm error},2}
+\Omega_{{\rm error},2}^2+\|G_1\|^3\Omega_{{\rm error},1}
  \Big)    \\
 & +O\Big(
\big( \|\Gamma_1\|\big( \|G_1\|+N^{-\frac{1}{4}}+
\| \widetilde{\mathcal{T}}_{{\rm d}} \|_2+\| \Gamma_1 \| \big)+\Omega_{{\rm error},1}^3
+N^{-\frac{1}{2}}\big( \|G_1\|+N^{-\frac{1}{4}}\big)\Omega_{{\rm error},1}
  \Big),
\end{aligned}
\end{equation}
\end{small}
where
\begin{equation}\label{tildeK1g1 critical non0}
\begin{aligned}
&\widetilde{K}_1(G_1)=
\frac{\overline{P}_2}{\sqrt{P_1}}\Big(
G_1^2+\frac{1}{2}N^{-\frac{1}{4}}G_1\hat{Z}
\Big)+\frac{P_2}{\sqrt{P_1}}\Big(
G_1^2+\frac{1}{2}N^{-\frac{1}{4}}G_1\hat{Z}
\Big)^*
\\
&+\frac{1}{2}\sqrt{P_1}N^{-\frac{1}{4}}
\big( G_1\hat{Z}^*+\hat{Z}G_1^* \big)+\sqrt{P_1}G_1^*G_1
+\sqrt{P_1}N^{-\frac{1}{4}}
\big( \hat{Z}^*G_1+G_1^*\hat{Z} \big)          \\
&+N^{-\frac{1}{4}}\Big( 
\frac{\overline{P}_2}{\sqrt{P_1}}\hat{Z}G_1
+\frac{P_2}{\sqrt{P_1}}G_1^*\hat{Z}^*
 \Big)+N^{-\frac{1}{2}}
 \Big( \frac{P_2}{\sqrt{P_1}}\big( \hat{Z}^* \big)^2
 +2\sqrt{P_1}\hat{Z}\hat{Z}^*+\frac{\overline{P}_2}{\sqrt{P_1}}\hat{Z}^2
  \Big),
\end{aligned}
\end{equation}
\begin{equation}\label{tildeK2g1 critical non0}
\widetilde{K}_2(G_1,Q_0)=\frac{1}{\sqrt{P_1}}
\Big(
P_1 G_1G_1^*+Q_0Q_0^*+\frac{1}{2}N^{-\frac{1}{4}}G_1
\big( P_1\hat{Z}^*+\overline{P}_2\hat{Z} \big)
+\frac{1}{2}N^{-\frac{1}{4}}
\big( P_1\hat{Z}+P_2\hat{Z}^* \big)G_1^*
\Big)
\end{equation}
and
\begin{equation}\label{tildeK3g1 critical non0}
\begin{aligned}
\widetilde{K}_3(G_1)=
&-\frac{1}{4}N^{-\frac{1}{2}}G_1
\big( \overline{P}_2\hat{Z}^2+2P_1\hat{Z}\hat{Z}^*+P_2
\big( \hat{Z}^* \big)^2
 \big)G_1^*-\hat{\tau}N^{-\frac{1}{2}}G_1G_1^*
  \\
  &+\Big( P_1-\frac{|P_2|^2}{P_1} \Big)\Big( 
  N^{-\frac{1}{4}}\big( \hat{Z}G_1+\frac{1}{2}G_1\hat{Z} \big)
  +G_1^2
   \Big)\Big( 
  N^{-\frac{1}{4}}\big( \hat{Z}G_1+\frac{1}{2}G_1\hat{Z} \big)
  +G_1^2
   \Big)^*.
\end{aligned}
\end{equation}

Note that $\widetilde{K}_2(G_1)$ is just $K/\sqrt{P_1}$ given in \eqref{K critical non0}, and  by \eqref{Omega error2}
$$
\Omega_{{\rm error},2}=O\Big( \|\hat{G}_2\|+
\Omega_{{\rm error},1}\big( \|G_1\|+N^{-\frac{1}{4}}\big)
+\Omega_{{\rm error},1}^2
  \Big),
$$
so the error terms in \eqref{tau hQ expansion10} can be replaced by
\begin{equation}\label{tau hQ expansion10 error}
\begin{aligned}
&O\Big(
\big( \|G_1\|+N^{-\frac{1}{4}} \big)\Omega_{{\rm error},1}^2
+\|\hat{G}_2\|\big( \|G_1\|+N^{-\frac{1}{4}} \big)^2
+\Omega_{{\rm error},1}^2\big( \|G_1\|+N^{-\frac{1}{4}} \big)^3
  \Big)
\\&  +
  O\Big(
 \|\hat{G}_2\|^2+
  \|\Gamma_1\|\big( \|G_1\|+N^{-\frac{1}{4}}+\| \widetilde{\mathcal{T}}_{{\rm d}} \|_2+\| \widetilde{\mathcal{T}}_{{\rm d,1}} \| \big)+\Omega_{{\rm error},1}^3
  \Big).
\end{aligned}
\end{equation}

{\bf Step 7:  Taylor expansion  of  $Y$ and summary.}   
The part  relevant to $Y$ in $f(\boldsymbol{T},Y,\boldsymbol{Q})$ given in \eqref{fTY} is 
\begin{equation}
\begin{aligned}
-
\frac{N-n}{\tau N}
{\rm Tr}\big(
YY^*
\big)+\sum_{\alpha=1}^tc_{\alpha}\log\det(E_{\alpha})+
N^{-1}\sum_{\alpha=1}^t
(R_{\alpha,N}-n)\log\det\big(
E_{\alpha}
\big),
\end{aligned}
\end{equation}
where $E_{\alpha}$ is defined in  \eqref{Aa}.

Since $n$ is a fixed integer,  by  \eqref{tau inverse expansion} we have 
\begin{equation}\label{Y expansion0}
N^{-1}\sum_{\alpha=1}^t
(R_{\alpha,N}-n)\log\det\big(
E_{\alpha}
\big)=
N^{-1}\sum_{\alpha=1}^t n(R_{\alpha,N}-n)
\log(f_{\alpha})+N^{-1}O\big(
N^{-\frac{1}{4}}+\| Y \|
\big)
\end{equation}
and
\begin{equation}\label{Y expansion1}
\begin{aligned}
\frac{1}{\tau}\big(1-\frac{n}{N}\big){\rm Tr}(YY^*)=
{\rm Tr}(YY^*)-\hat{\tau}N^{-\frac{1}{2}}
{\rm Tr}(YY^*)+O\big( N^{-1}\| Y \|^2 \big).
\end{aligned}
\end{equation}
In  \eqref{logAalpha decompose}, we have
\begin{equation}\label{Y expansion2}
\begin{aligned}
\log\det\left(
\mathbb{I}_{2n}+\sqrt{\gamma_N}N^{-\frac{1}{4}}
\hat{E}_{\alpha}
\right)=
&N^{-\frac{1}{4}}{\rm Tr}\big( \hat{E}_{\alpha} \big)-
\frac{1}{2}N^{-\frac{1}{2}}{\rm Tr}\big(\hat{E}_{\alpha} \big)
\\
&+\frac{1}{3}N^{-\frac{3}{4}}{\rm Tr}\big( \hat{E}_{\alpha}^3 \big)
-\frac{1}{4}N^{-1}{\rm Tr}\big( \hat{E}_{\alpha}^4 \big)
+O\big( N^{-\frac{5}{4}} \big).
\end{aligned}
\end{equation}
By  the definition of  $\hat{E}_{\alpha}$ in \eqref{HAalpha}, noting that  
\begin{equation}
\begin{aligned}
&\big( \gamma_{N}f_{\alpha}\mathbb{I}_n+Y^*Y \big)^{-1}
=f_{\alpha}^{-1}\mathbb{I}_n-f_{\alpha}^{-2}Y^*Y+
O\big( N^{-1}+\| Y \|^4 \big),
\end{aligned}
\end{equation}
we arrive at 
\begin{equation}\label{TrHAalpha1}
\begin{aligned}
&{\rm Tr}\big( \hat{E}_{\alpha} \big)=
f_{\alpha}^{-1}\big( 
\overline{z_0-a_{\alpha}}{\rm Tr}(\hat{Z})
+(z_0-a_{\alpha}){\rm Tr}(\hat{Z}^*)
 \big)     \\
 &-f_{\alpha}^{-2}\Big(
\overline{z_0-a_{\alpha}}{\rm Tr}\big( Y^*Y\hat{Z} \big)
+(z_0-a_{\alpha}){\rm Tr}\big( YY^*\hat{Z}^* \big)
 \Big)+O\big( N^{-1}+\| Y \|^4 \big),
\end{aligned}
\end{equation}
\begin{equation}\label{TrHAalpha2}
\begin{aligned}
{\rm Tr}\big( \hat{E}_{\alpha}^2 \big)
&=
f_{\alpha}^{-2}\big( 
\overline{z_0-a_{\alpha}}^2{\rm Tr}(\hat{Z}^2)
+(z_0-a_{\alpha})^2{\rm Tr}(\hat{Z}^*)^2
 \big)     
-2\overline{z_0-a_{\alpha}}^2 f_{\alpha}^{-3}
 {\rm Tr}\big( Y^*Y\hat{Z}^2 \big)
\\ 
 &-2f_{\alpha}^{-2}{\rm Tr}\big( Y\hat{Z}Y^*\hat{Z}^* \big)
 -2(z_0-a_{\alpha})^2 f_{\alpha}^{-3}
 {\rm Tr}\big( YY^*\big(\hat{Z}^*\big)^2 \big)
+O\big( N^{-1}+\| Y \|^4 \big),
\end{aligned}
\end{equation}
\begin{equation}\label{TrHAalpha3}
\begin{aligned}
{\rm Tr}\big( {\hat{E}_{\alpha}}^3 \big)
=f_{\alpha}^{-3}\big( 
\overline{z_0-a_{\alpha}}^3{\rm Tr}(\hat{Z}^3)
+(z_0-a_{\alpha})^3{\rm Tr}(\hat{Z}^*)^3
 \big)     
+O\big( N^{-1}+\| Y \|^2 \big)
\end{aligned}
\end{equation}
and
\begin{equation}\label{TrHAalpha4}
\begin{aligned}
{\rm Tr}\big( \hat{E}_{\alpha} ^4 \big)
=
f_{\alpha}^{-4}\big( 
\overline{z_0-a_{\alpha}}^4{\rm Tr}(\hat{Z}^4)
+(z_0-a_{\alpha})^4{\rm Tr}(\hat{Z}^*)^4
 \big)     
+O\big( N^{-1}+\| Y \|^2 \big).
\end{aligned}
\end{equation}
Together with 
\begin{equation}\label{fin0Y critical non0}
\begin{aligned}
&{\rm Tr}(YY^*)-\sum_{\alpha=1}^t c_{\alpha}
\log\det\big( \gamma_N f_{\alpha}\mathbb{I}_n+YY^* \big)=
\\
&-\frac{n^2}{N}-n\sum_{\alpha=1}^t c_{\alpha} \log(f_{\alpha})
+\frac{P_1}{2}{\rm Tr}(YY^*)^2+O
\big(
\| Y \|^6+N^{-1}\| Y \|^2+N^{-2}
\big),
\end{aligned}
\end{equation}
we see from \eqref{logAalpha decompose} and  \eqref{Y expansion1}-\eqref{TrHAalpha4} that  
\begin{equation}\label{finY critical non0}
\begin{aligned}
&\frac{1}{\tau}\big(1-\frac{n}{N}\big){\rm Tr}(YY^*)-
\sum_{\alpha=1}^t c_{\alpha}
\log\det( E_{\alpha} )=
-\frac{n^2}{N}-n\sum_{\alpha=1}^t c_{\alpha} \log(f_{\alpha})
\\
&-N^{-\frac{1}{4}}\sum_{\alpha=1}^t
\frac{c_{\alpha}}{f_{\alpha}}\big( 
\overline{z_0-a_{\alpha}}{\rm Tr}(\hat{Z})
+(z_0-a_{\alpha}){\rm Tr}(\hat{Z}^*)
 \big)
 \\&+\frac{1}{2}N^{-\frac{1}{2}}\sum_{\alpha=1}^t
 \frac{c_{\alpha}}{f_{\alpha}^2}\big( 
\overline{z_0-a_{\alpha}}^2{\rm Tr}(\hat{Z}^2)
+(z_0-a_{\alpha})^2{\rm Tr}(\hat{Z}^*)^2
 \big)  
 \\&-\frac{1}{3}N^{-\frac{3}{4}}\sum_{\alpha=1}^t
 \frac{c_{\alpha}}{f_{\alpha}^3}\big( 
\overline{z_0-a_{\alpha}}^3{\rm Tr}(\hat{Z}^3)
+(z_0-a_{\alpha})^3{\rm Tr}(\hat{Z}^*)^3
 \big)     \\
 &+\frac{1}{4} N^{-1}\sum_{\alpha=1}^t
 \frac{c_{\alpha}}{f_{\alpha}^4}\big( 
\overline{z_0-a_{\alpha}}^4{\rm Tr}(\hat{Z}^4)
+(z_0-a_{\alpha})^4{\rm Tr}(\hat{Z}^*)^4
 \big)+\frac{1}{2}P_1{\rm Tr}(YY^*)^2
 \\&-N^{-\frac{1}{2}}\Big(
 \overline{P}_2{\rm Tr}( Y^*Y\hat{Z}^2 )
 +P_1{\rm Tr}( Y\hat{Z}Y^*\hat{Z}^* )
 +P_2{\rm Tr}( YY^*\big(\hat{Z}^*)^2 \big)
 +\hat{\tau}{\rm Tr}( YY^* )
 \Big)     \\
 &+O\big( \| Y \|^6+N^{-\frac{1}{4}}\| Y \|^4
+N^{-\frac{3}{4}}\| Y \|^2+N^{-\frac{5}{4}}
  \big).
\end{aligned}
\end{equation}

Recalling  $f(\boldsymbol{T},Y,\boldsymbol{Q})$ in \eqref{fTY},  combining \eqref{tau hQ expansion10} and \eqref{finY critical non0} we have
\begin{equation}\label{fTY expansion critical non0}
f(\boldsymbol{T},Y,\boldsymbol{Q})-\widetilde{f}
  \big(\big(\sqrt{ c_{\alpha}/f_{\alpha}}\mathbb{I}_n\big)_{\alpha},0,0\big)
=F_0+F_1+O(F_2),
\end{equation}  
where
\begin{multline}\label{f0 critical non0}
F_0=\frac{n^2}{N}
+N^{-1}\sum_{\alpha=1}^t \Big(
nR_{\alpha,N}-\frac{n(n+1)}{2}
\Big)\log c_{\alpha}
-N^{-1}\frac{n(n-1)}{2}\sum_{\alpha=1}^t\log f_{\alpha}
\\
+N^{-\frac{1}{4}}\big(
\overline{z}_0{\rm Tr}(\hat{Z})+z_0{\rm Tr}\big(\hat{Z}^*\big)
\big)+N^{-\frac{1}{2}}{\rm Tr}\big( \hat{Z}\hat{Z}^* \big)-\frac{N^{-1}}{4}
\sum_{\alpha=1}^t
 \frac{c_{\alpha}}{f_{\alpha}^4}{\rm Tr}\big( H_{a_\alpha-z_0}^4 \big)    
   \\
-\frac{N^{-1}}{4}\sum_{\alpha=1}^t
 \frac{c_{\alpha}}{f_{\alpha}^4}\Big( 
\overline{z_0-a_{\alpha}}^4{\rm Tr}\big(\hat{Z}^4\big)
+(z_0-a_{\alpha})^4{\rm Tr}\big(\hat{Z}^*\big)^4
 \Big)    -\hat{\tau}N^{-1}\sum_{\alpha=1}^t
\frac{c_{\alpha}}{f_{\alpha}^2}{\rm Tr}\big( H_{a_\alpha-z_0}^2
\big)  
 \\
 +n\hat{\tau}N^{-\frac{1}{2}}(1-|z_0|^2)
 -\hat{\tau}N^{-\frac{3}{4}}\big(
\overline{z}_0{\rm Tr}(\hat{Z})+z_0{\rm Tr}\big(\hat{Z}^*\big)
\big)    -n\hat{\tau}^2N^{-1}(1-|z_0|^2),
\end{multline}
\begin{small}
\begin{equation}\label{f1 critical non0}
\begin{aligned}
&F_1=
|z_0|^2{\rm Tr}(\Gamma_1)     -{\rm Tr}\Big( (X-\frac{1}{2}B\Sigma^{-1})\Sigma (X-\frac{1}{2}B\Sigma^{-1})^t\Big)
  -{\rm Tr}\Big(\widetilde{K}_2(G_1,Q_0)
 \big( \frac{1}{2}\widetilde{K}_2(G_1,Q_0)+\widetilde{K}_1(G_1) \big)
 \Big)
\\& +{\rm Tr}\big( \widetilde{K}_3(G_1)\big)+
 N^{-\frac{1}{2}} {\rm Tr} \bigg(
\sum_{\alpha=2}^t  \widetilde{\mathcal{T}}_{\rm d,\alpha}
\Big(
\frac{1}{f_{\alpha}} H_{a_{\alpha}-z_0} ^2
-\frac{1}{f_{1}} H_{a_{1}-z_0} ^2+\hat{\tau}(f_{\alpha}-f_1)
\Big)\bigg)
 \\
 &-\sum_{\alpha=3}^t {\rm Tr}\Big(
 G_{\alpha}-\overline{\eta}_{\alpha}\Big(
 N^{-\frac{1}{4}}\big( \hat{Z}G_1+\frac{G_1\hat{Z}}{2} \big)
 +G_1^2
 \Big)
 \Big)\Big(
 G_{\alpha}-\overline{\eta}_{\alpha}\Big(
 N^{-\frac{1}{4}}\big( \hat{Z}G_1+\frac{G_1\hat{Z}}{2} \big)
 +G_1^2
 \Big)
 \Big)^*           \\
 &
            -{\rm Tr}\Big( 
 Q_{t+1}\big(z_0\mathbb{I}_{r_{t+1}}-A_{t+1}\big)^*
 \big(z_0\mathbb{I}_{r_{t+1}}-A_{t+1}\big)Q_{t+1}^*
 \Big)  -\frac{P_1}{2}{\rm Tr}(YY^*)^2  \\
 &
 +N^{-\frac{1}{2}}\Big(
 \overline{P}_2{\rm Tr}\big( Y^*Y\hat{Z}^2 \big)
 +P_1{\rm Tr}\big( Y\hat{Z}Y^*\hat{Z}^* \big)
 +P_2{\rm Tr}\Big( YY^*\big(\hat{Z}^*\big)^2 \Big)
 +\hat{\tau}{\rm Tr}\big( YY^* \big)
 \Big),
\end{aligned}
\end{equation}
\end{small}
and
\begin{equation}\label{f2 critical non0}
\begin{aligned}
F_2&=\big( \|G_1\|+N^{-\frac{1}{4}} \big)\Omega_{{\rm error},1}^2
+\| \hat{G}_2 \|\big( \|G_1\|+N^{-\frac{1}{4}} \big)^2
+\| \hat{G}_2 \|^2  +\Omega_{{\rm error},1}^3          \\
&\ \ +\Omega_{{\rm error},1}
\big( \|G_1\|+N^{-\frac{1}{4}} \big)^3+\|\Gamma_1\|
\big( \|G_1\|+N^{-\frac{1}{4}}+\| \widetilde{\mathcal{T}}_{\rm d} \|_2+
\|\Gamma_1\| \big) \big)
\\
&\ \ +\| Y \|^6+N^{-\frac{1}{4}}\| Y \|^4
+N^{-\frac{3}{4}}\| Y \|^2+N^{-\frac{5}{4}}.
\end{aligned}
\end{equation}
Here $\Sigma, X,$ and   $B$  are given in   \eqref{Sigma}  and \eqref{XB}.

%%%%%%%%%%%%%%%%%%%%%%%%%%%%%%%%%%%%%%%%%%%%%%%%%%%%%%%%%%%%%%%%%%%%%%%%%%%%%%%%%%%%%%%%%%%%%%%%%%%%%%%%%%%%%%%%%%%%%%%%%%%%%%%%%%%%%%%%%%%%%%%%%%%%
\subsection{Taylor expansion of $ \det\big(\widehat{L}_1+\sqrt{\gamma_N}\widehat{L}_2\big)$.} \label{sect3.2}
Recalling $\hat{L}_1$ and $\hat{L}_2$ defined in    \eqref {L1hat} and \eqref{L0hat},
since  $z_0$ is not an eigenvalue of $A_{t+1}$ and all $a_{\alpha}\neq z_0$,  
$\widehat{L}_1$ is  invertible when $N$ is sufficiently large.  Together with the decomposition 
\begin{equation}\label{L6}
\widehat{L}_2
=\left[\begin{smallmatrix}
\left[\begin{smallmatrix}
a_1\mathbb{I}_n & \\ & \overline{a}_{1}\mathbb{I}_n
\end{smallmatrix}\right]\otimes T_1^*\\
\vdots \\
\left[\begin{smallmatrix}
a_t\mathbb{I}_n & \\ & \overline{a}_{t}\mathbb{I}_n
\end{smallmatrix}\right]\otimes T_t^* \\
 \left[\begin{smallmatrix}
\mathbb{I}_n \otimes  
\widetilde{A}_{t+1}\widetilde{Q}_{t+1}^*  & \\ & \mathbb{I}_n \otimes  
\widetilde{A}_{t+1}^* \widetilde{Q}_{t+1}^*
\end{smallmatrix}\right]
 \end{smallmatrix}
\right] \Big[ 
 \mathbb{I}_{2n}\otimes T_1  
,\cdots,  
 \mathbb{I}_{2n}\otimes T_t,  
\mathbb{I}_{2n}\otimes \widetilde{Q}_{t+1}
 \Big],
%\big[\begin{smallmatrix}
% \mathbb{I}_{2n}\otimes T_1 &
%,\cdots, &
% \mathbb{I}_{2n}\otimes T_t, &
%\mathbb{I}_{2n}\otimes \widetilde{Q}_{t+1}
%\end{smallmatrix}\big],
\end{equation} 
change the order of the above matrix product and  we obtain 
 \begin{multline}\label{detMrewrite0}
\det\!\bigg(
\widehat{L}_1+\sqrt{\gamma_N}\widehat{L}_2
\bigg)=\big|\det\big(z_0\mathbb{I}_{r_{t+1}}-A_{t+1}\big)\big|^{2n}
\Big( \prod_{\alpha=1}^t \det\big( E_{\alpha} \big) \Big)^n
\det\big( E_{0} \big)^{r_0-n}
\big( 1+O(\| Y \|+N^{-\frac{1}{4}}) \big) \\
\times \det\bigg(
E_0\otimes \mathbb{I}_{n}+\sqrt{\gamma_N}
 (E_0\otimes \mathbb{I}_{n})
 \begin{bmatrix}
Q_{1,1} & Q_{1,2} \\ Q_{2,1} & Q_{2,2}
\end{bmatrix} +\sqrt{\gamma_N} 
\begin{bmatrix}
z_0\mathbb{I}_n & \\ & \overline{z}_0\mathbb{I}_n
\end{bmatrix}\otimes (Q_0Q_0^*)
+O\big( \| Q_{t+1} \|^2 \big)
\bigg),
\end{multline}
where 
\begin{equation}\label{tildeA alpha inverse rewrite}
E_0:=\begin{bmatrix}
\sqrt{\gamma_N}N^{-\frac{1}{4}}\hat{Z}
 & -Y^*
 \\ Y & 
\sqrt{\gamma_N}N^{-\frac{1}{4}}\hat{Z}^*
\end{bmatrix},\quad
\begin{bmatrix}
Q_{1,1} & Q_{1,2} \\ Q_{2,1} & Q_{2,2}
\end{bmatrix}:=\sum_{\alpha=1}^t
\left( E_{\alpha}^{-1} \left[\begin{smallmatrix}
a_{\alpha}\mathbb{I}_n & \\ & \overline{a}_{\alpha}\mathbb{I}_n
\end{smallmatrix}\right] \right) \otimes
(T_{\alpha}T_{\alpha}^*).
\end{equation} 
Obviously, by \eqref{A alpha}, we have  
\begin{equation}\label{Qij}
\begin{aligned}
&Q_{1,1}=\sum_{\alpha=1}^t
a_{\alpha}\Big(
Z_{\alpha}+Y^*\big(Z_{\alpha}^*\big)^{-1}Y
\Big)^{-1}\otimes
(T_{\alpha}T_{\alpha}^*),\quad
Q_{2,2}=\sum_{\alpha=1}^t
\overline{a}_{\alpha} 
\big( Z_{\alpha}^*+YZ_{\alpha}^{-1}Y^* \big)^{-1} \otimes
(T_{\alpha}T_{\alpha}^*),
\\
&Q_{1,2}=\sum_{\alpha=1}^t \overline{a}_{\alpha}
Z_{\alpha}^{-1}Y^* \big( Z_{\alpha}^*+YZ_{\alpha}^{-1}Y^* \big)^{-1}
\otimes
(T_{\alpha}T_{\alpha}^*),   
\\
&
Q_{2,1}=-\sum_{\alpha=1}^t a_{\alpha}\big(Z_{\alpha}^*\big)^{-1}
Y \Big(
Z_{\alpha}+Y^*\big(Z_{\alpha}^*\big)^{-1}Y
\Big)^{-1}\otimes
(T_{\alpha}T_{\alpha}^*).
\end{aligned}
\end{equation}

In order to keep  the order of $Q_{i,j}$ up to $N^{-\frac{1}{4}}$, 
simple calculations show that 
$$  
Z_{\alpha}^{-1}=(z_0-a_{\alpha})^{-1}\mathbb{I}_n
-N^{-\frac{1}{4}}(z_0-a_{\alpha})^{-2}\hat{Z}+O\big( N^{-\frac{1}{2}} \big),
$$
\begin{small}
\begin{equation}\label{detM expansion1}
\begin{aligned}
\big( Z_{\alpha}+Y^*\big( Z_{\alpha}^* \big)^{-1}Y \big)^{-1}
=(z_0-a_{\alpha})^{-1}\mathbb{I}_n
-N^{-\frac{1}{4}}(z_0-a_{\alpha})^{-2}\hat{Z}+O\big( N^{-\frac{1}{2}}
+\| Y \|^2\big),   
\end{aligned}
\end{equation}
\end{small}
and 
\begin{equation*}
\begin{aligned}
T_{\alpha}T_{\alpha}^*
%&=
%T_{{\rm d},\alpha}+\sqrt{T_{{\rm d},\alpha}} T_{{\rm u},\alpha}^*
%+ T_{{\rm u},\alpha}\sqrt{T_{{\rm d},\alpha}}+T_{{\rm u},\alpha}T_{{\rm u},\alpha}^*
%\\
&=\frac{c_{\alpha}}{f_{\alpha}}\mathbb{I}_n+\widetilde{T}_{{\rm d},\alpha}+
\frac{\sqrt{c_{\alpha}}}{f_{\alpha}}
\big( \hat{T}_{{\rm u},\alpha}+\hat{T}_{{\rm u},\alpha}^* \big)
+O\big( \| \hat{T}_{\rm u} \|^2
+\| \hat{T}_{\rm u} \|\| \widetilde{T}_{\rm d} \| \big)\\
&=\frac{c_{\alpha}}{f_{\alpha}}
\mathbb{I}_n+N^{-\frac{1}{4}}
\frac{c_{\alpha}}{f_{\alpha}^2}H_{a_{\alpha}-z_0}
+\frac{c_{\alpha}}{f_{\alpha}^2}\big(\overline{a_{\alpha}-z_0}G_1
+ (a_{\alpha}-z_0)G_1^*)
+O\big( \Omega_{{\rm error},1} \big),
\end{aligned}
\end{equation*}
where $\Omega_{{\rm error},1}$  is defined in \eqref{OmegaError1}.
For the latter, we have used the facts 
%Recall  and \eqref{g hatT connection1}, we have
\begin{equation*}
\| \hat{T}_{\rm u} \|=O\big( \| G_1 \|+\Omega_{{\rm error},1} \big),\quad
\| \widetilde{T}_{\rm d} \|=O\big( \| G_1 \|^2
+\| \Gamma_1 \| +  \| \widetilde{\mathcal{T}}_{{\rm d}} \|_2+N^{-\frac{1}{4}}
+\Omega_{{\rm error},1}^2 \big),
\end{equation*} 
\begin{equation*}
\| \hat{T}_{\rm u} \|
\big( \| \hat{T}_{\rm u} \|+\| \widetilde{T}_{\rm d} \| \big)
=O\Big(
\big( \| G_1 \|+\Omega_{{\rm error},1} \big)
\big( \| G_1 \|+N^{-\frac{1}{4}}+\Omega_{{\rm error},1} \big)
\Big).
\end{equation*} 
 and 
$$  
\hat{T}_{{\rm u},\alpha} =\frac{\sqrt{c_{\alpha}}\overline{z_0-a_{\alpha}}}
{f_{\alpha}}G_1+O\big( \Omega_{{\rm error},1} \big), \ \widetilde{T}_{{\rm d},\alpha}=N^{-\frac{1}{4}}
\frac{c_{\alpha}}{f_{\alpha}^2} H_{a_{\alpha}-z_0}
+O\big(\| \Gamma_1 \| +  \| \widetilde{\mathcal{T}}_{{\rm d}} \|_2+\| G_1 \|^2+\Omega_{{\rm error},1}^2 \big).
$$
 
Therefore,   by  $P_0(z_0)=0$ we can obtain 
 
 \begin{equation}\label{detM expansion3}
Q_{1,2}=\overline{z}_0P_1 Y^*\otimes \mathbb{I}_n+\| Y \|
 O\big( 
 N^{-\frac{1}{4}}+\| G_1 \|
 \Omega_{{\rm error},1} \big).
\end{equation}
 
 \begin{equation}\label{detM expansion4}
Q_{2,1}=-z_0P_1 Y\otimes \mathbb{I}_n+\| Y \|
 O\big( 
 N^{-\frac{1}{4}}+\| G_1 \|
 \Omega_{{\rm error},1} \big),
\end{equation}
 \begin{equation}\label{detM expansion5}
 \begin{aligned}
\mathbb{I}_{n^2}+\sqrt{ \gamma_N}Q_{1,1}=
&-z_0\Big(
N^{-\frac{1}{4}}\mathbb{I}_{n}\otimes
\big( P_1\hat{Z}^*+\overline{P}_2\hat{Z} \big)
+\mathbb{I}_{n}\otimes
\big( P_1G_1^*+\overline{P}_2G_1 \big)
\Big)
\\&
-z_0N^{-\frac{1}{4}}\overline{P}_2 \hat{Z}\otimes \mathbb{I}_{n}
+O\big( 
\| Y \|^2+
 \Omega_{{\rm error},1} \big)
 \end{aligned}
\end{equation}
and
 \begin{equation}\label{detM expansion6}
 \begin{aligned}
\mathbb{I}_{n^2}+\sqrt{\gamma_N}Q_{2,2}=
&-\overline{z}_0\Big(
N^{-\frac{1}{4}}\mathbb{I}_{n}\otimes
\big( P_1\hat{Z}+P_2\hat{Z}^* \big)
+\mathbb{I}_{n}\otimes
\big( P_1 G_1+P_2G_1^* \big)
\Big)
\\&
-\overline{z}_0
N^{-\frac{1}{4}}P_2 \hat{Z}^*\otimes \mathbb{I}_{n}
+O\big( 
\| Y \|^2+
 \Omega_{{\rm error},1} \big).
 \end{aligned}
\end{equation}

Finally, combination of  \eqref{detMrewrite0}, \eqref{tildeA alpha inverse rewrite}, \eqref{detM expansion3}-\eqref{detM expansion6} gives rise to   
 \begin{multline}\label{detM expansion6 final}
\det\Big(
\widehat{L}_1+\sqrt{\gamma_N}\widehat{L}_2
\Big)=|z_0|^{2n^2} \big| \det\big(z_0\mathbb{I}_{r_{t+1}}-A_{t+1}\big) \big|^{2n}
\Big(\prod_{\alpha=1}^t f_{\alpha}^{n^2}
\Big)\det\big( E_{0} \big)^{r_0-n} \\
\times \bigg(
\det\begin{bmatrix}
P_1(Y^*Y)\otimes \mathbb{I}_n+\widetilde{F}_{1,1} 
& \widetilde{F}_{1,2}
+N^{-\frac{1}{4}}\big(
P_1\hat{Z}Y^*+P_2Y^*\hat{Z}^*
\big)\otimes \mathbb{I}_n      \\
-\widetilde{F}_{1,2}^*-N^{-\frac{1}{4}}\big(
P_1\hat{Z}^*Y+\overline{P}_2 Y\hat{Z}
\big)\otimes \mathbb{I}_n & 
\widetilde{F}_{1,1}^*+P_1(YY^*)\otimes \mathbb{I}_n
\end{bmatrix}
\\
+O\Big(
\| Y \|^{4n^2+1}+\| G_1 \|^{4n^2+1}+
N^{-n^2-\frac{1}{4}}+\Omega_{{\rm error},1}^{2n^2+\frac{1}{2}}
\Big)
\bigg),
\end{multline}
where $A_0$ and $\Omega_{{\rm error},1}$ are defined in \eqref{tildeA alpha inverse rewrite} and \eqref{OmegaError1} respectively, 
\begin{equation}\label{tildeK11}
\widetilde{F}_{1,1}=
\mathbb{I}_n\otimes (Q_0Q_0^*)-N^{-\frac{1}{2}}
\overline{P}_2\hat{Z}^2\otimes \mathbb{I}_n
-N^{-\frac{1}{4}}\hat{Z}\otimes 
\Big(
N^{-\frac{1}{4}}\big(
P_1\hat{Z}^*+\overline{P}_2\hat{Z}
\big)+P_1G_1^*+\overline{P}_2G_1 
\Big)
\end{equation} 
and
\begin{equation}\label{tildeK12}
\widetilde{F}_{1,2}=Y^*
\otimes 
\Big(
N^{-\frac{1}{4}}\big(
P_1\hat{Z}+P_2\hat{Z}^*
\big)+P_1G_1+P_2G_1^*
\Big).
\end{equation}

\subsection{Matrix integrals and final proof} \label{sect3.3}

 Finally, with those preparations  in Section \ref{sect3.1} and Section  \ref{sect3.2}, we will  give a  compete  proof of Theorem \ref{2-complex-correlation critical2}.

%Now we give a general argument to eliminate the error terms in $f(T,Y)$ and $g(Y,U,T)$, and only retain the terms of correct order. 
Recalling \eqref{INdelta}, \eqref{partial g2} and \eqref{fTY expansion critical non0}, we rewrite
 \begin{equation}\label{INdeltaexpan critical non0}
e^{-NF_0}I_{N,\delta}:=J_{1,N}+J_{2,N},
\end{equation}
 where

 \begin{multline}\label{J1N critical non0}
J_{1,N}=\prod_{\alpha=1}^t f_{\alpha}^{-\frac{n(n-1)}{2}}
\int_{\hat{\Omega}_{N,\delta}}
 \det\Big(
\widehat{L}_1+\sqrt{\gamma_N}\widehat{L}_2
\Big)
e^{NF_1} \Big( 
1+
\\O\Big( \| G_1 \|+\| \hat{G}_2 \|+
\sum_{\beta=3}^t
\| G_{\beta} \|+\|\widetilde{\mathcal{T}}_{{\rm d}} \|_2+
\| Q_{t+1} \|+
\| Q_{0} \|^2+\|\Gamma_1\|+N^{-\frac{1}{4}}\Big)
  \Big) {\rm d}\widetilde{V},
\end{multline}
and  \begin{multline}\label{J2N critical non0}
J_{2,N}=\prod_{\alpha=1}^t f_{\alpha}^{-\frac{n(n-1)}{2}}
 \int_{\hat{\Omega}_{N,\delta}}
 \det\Big(
\widehat{L}_1+\sqrt{\gamma_N}\widehat{L}_2
\Big) 
 e^{NF_1}
\big(
e^{O(NF_2)}-1
\big) 
 \Big( 
1+
\\
O\Big( \| G_1 \|+\| \hat{G}_2 \|+
\sum_{\beta=3}^t
\| G_{\beta} \|+\|\widetilde{\mathcal{T}}_{{\rm d}} \|_2+
\| Q_{t+1} \|+
\| Q_{0} \|^2+\|\Gamma_1\|+N^{-\frac{1}{4}}\Big)
  \Big) {\rm d}\widetilde{V},
\end{multline}
with 
\begin{equation}\label{volume element critical non0}
{\rm d}\widetilde{V}={\rm d}\Gamma_1{\rm d}G_1
{\rm d}\hat{G_2}{\rm d}Y {\rm d}Q_0{\rm d}Q_{t+1}
\prod_{\alpha=3}^t{\rm d}G_{\alpha}
\prod_{\alpha=2}^t{\rm d}  \widetilde{\mathcal{T}}_{{\rm d,\alpha}},
\end{equation} 
and 
\begin{equation}\label{hat Omega N delta critical non0}
\hat{\Omega}_{N,\delta}=A_{N,\delta}\cap \big\{
\Gamma_1+\sqrt{P_1}\big( \hat{G}_2+\hat{G}_2^* \big)\leq 0
\big\}; 
\end{equation}
 cf.  \eqref{ANdelta} for definition of $A_{N,\delta}$.

Under  the restriction condition  of $\Gamma_1+\sqrt{P_1}\big( \hat{G}_2+\hat{G}_2^* \big)\leq 0$,  every   principal minor  of order 2 is non-positive definite, so all diagonal entries of $\Gamma_1$  are zero or negative and  
\begin{equation}\label{g2hatU1 estimation}
{\rm Tr}\big(
\hat{G}_2\hat{G}_2^*
\big)=\sum_{1\leq i<j\leq n}\big| (\hat{G}_{2})_{i,j} \big|^2
\leq \frac{1}{P_1} \sum_{1\leq i<j\leq n} (\Gamma_1)_{i,i} (\Gamma_1)_{j,j}
\leq  \frac{1}{P_1}
  {\rm Tr}\big(
\Gamma_1  
\big)^2.
\end{equation}
Noticing the absence of $ \hat{G}_2$ in $F_1$ given  in \eqref{f1 critical non0}, for convergence we need to control  $\hat{G}_2$ by $
\Gamma_1$  in $F_2$,  just as shown in  \eqref{g2hatU1 estimation}. Since  $\| \hat{G}_2 \|= O(\| \Gamma_1 \|)$ from   \eqref{g2hatU1 estimation}, for sufficiently large $N$ and small $\delta$, there exists some $C>0$ such that 
\begin{equation*}
\begin{aligned}
&\frac{1}{C} NF_2 \leq N^{-\frac{1}{4}}+\sqrt{\delta}+
\sqrt{N\delta} \big(
\| G_1 \|^2+\sum_{\alpha=3}^t \| G_{\alpha} \|
+\|\widetilde{\mathcal{T}}_{{\rm d}} \|_2
+\| Y \|^2+\| Q_{t+1} \|+\| Q_0 \|^2
\big)            \\
&+N\sqrt{\delta}\big(
\| G_1 \|^4+\sum_{\alpha=3}^t \| G_{\alpha} \|^2
+\|\widetilde{\mathcal{T}}_{{\rm d}} \|_2^2
+\| Y \|^4+\| Q_{t+1} \|^2+\| Q_0 \|^4+\| \Gamma_1 \|
\big).
\end{aligned}
\end{equation*}

 Using the inequality \begin{equation}\label{J2N inequ critical non0}
\big|
e^{O(NF_2)}-1
\big|\leq O(N|F_2|)e^{O(N|F_2|)},
\end{equation}
 after change of  variables
\begin{equation}\label{change scale critical non0}
\begin{aligned}
&(G_1,Q_0,Y)\rightarrow N^{-\frac{1}{4}}(G_1,Q_0,Y),\quad (\hat{G}_2,\Gamma_1)\rightarrow N^{-1}(\hat{G}_2,\Gamma_1),\quad
Q_{t+1} \rightarrow N^{-\frac{1}{2}}Q_{t+1},
\\
&\widetilde{\mathcal{T}}_{{\rm d,\alpha}}
\rightarrow N^{-\frac{1}{2}}  \widetilde{\mathcal{T}}_{{\rm d,\alpha}},\quad
G_{\beta}\rightarrow N^{-\frac{1}{2}}G_{\beta},\quad \alpha=2,\cdots,t,
\beta=3,\cdots,t,
\end{aligned}
\end{equation}
the term $O(NF_2)$ in \eqref{J2N inequ critical non0} has an upper bound  by   $N^{-\frac{1}{4}}P(\tilde{V})$ for some polynomial of variables.  
since $F_1$ can control $F_2$  for sufficiently  small $\delta$, by the argument of  Laplace method and the dominant convergence theorem we know that 
$J_{2,N}$ is typically of order  $N^{-\frac{1}{4}}$ compared with $J_{1,N}$, that is,  
%  \eqref{J1N critical non0} and \eqref{J2N critical non0}
 \begin{equation}\label{J2Nestimation critical non0}
J_{2,N}=O\big(
N^{-\frac{1}{4}}
\big)J_{1,N}.
\end{equation}

For $J_{1,N}$,   
take  a large  $M_0>0$ such that
\begin{multline*}%\label{ANdelta complement critical non0}
A_{N,\delta}^{\complement}\subseteq  
  \bigcup_{\alpha=3}^t
\Big\{
{\rm Tr}\big(
G_{\alpha}G_{\alpha}^*
\big)>\frac{\delta}{M_0}
\Big\}
\bigcup
\bigcup_{\alpha=2}^t
\Big\{
{\rm Tr}\big(
\widetilde{\mathcal{T}}_{{\rm d,\alpha}}\widetilde{\mathcal{T}}_{{\rm d,\alpha}}^*
\big)>\frac{\delta}{M_0}
\Big\} \bigcup \Big\{{\rm Tr}\big(
G_{1}G_{1}^*
\big)>\frac{\delta}{M_0}\Big\}
        \\
\bigcup \Big\{{\rm Tr}\big(
\Gamma_{1}\Gamma_{1}^*
\big)>\frac{\delta}{M_0}\Big\} 
 \bigcup
\Big\{
{\rm Tr}\big(
YY^*
\big)>\frac{\delta}{M_0}
\Big\}
\bigcup
\Big\{
{\rm Tr}\big(
Q_{t+1}Q_{t+1}^*
\big)>\frac{\delta}{M_0}
\Big\}\bigcup
\Big\{
{\rm Tr}\big(
Q_0Q_0^*
\big)>\frac{\delta}{M_0}
\Big\}.
\end{multline*}
Here we have used \eqref{g2hatU1 estimation} to drop out the domain   $ \{{\rm Tr}\big(
\hat{G}_{2} \hat{G}_{2}^*
\big)>\delta/M_0\}$.   For each piece of  domain,  only  keep  the restricted matrix variable and let  the others free,  it's easy to prove that the corresponding  matrix integral  is exponentially  small, that is,  $O\big( e^{-\delta_1 N} \big)
$
for some $\delta_1>0$.  

So we can extend the integration  region from $\hat{\Omega}_{N,\delta}$ to  $\big\{ \Gamma_1+\sqrt{P_1}\big( \hat{G}_2+\hat{G}_2^* \big)\leq 0 \big\}$,  
and by  the change of  variables    
\eqref{change scale critical non0} we have
\begin{multline}\label{J1N change critical non0}
J_{1,N}=N^{-\frac{n(n+1)}{4}-\frac{n^2t}{2}-n(r_{t+1}+r_0+n)}
|z_0|^{2n^2}\big| \det\big( z_0\mathbb{I}_{r_{t+1}}-A_{t+1} \big) \big|^{2n}    \prod_{\alpha=1}^t
f_{\alpha}^{\frac{n(n+1)}{2}}    
   \\
\times
\Big( I_0+O\big( N^{-\frac{1}{4}} \big) \Big),
\end{multline}
where, with $\hat{H}=\Gamma_1+\sqrt{P_1}\big( \hat{G}_2+\hat{G}_2^* \big)$, 
\begin{multline}\label{I0 critical non0}
I_0=\int_{\hat{H}\leq 0}\bigg( \det\begin{bmatrix}
\hat{Z}
 & -Y^*
 \\ Y & 
\hat{Z}^*
\end{bmatrix}\bigg)^{r_0-n}
\\
\times
\det\begin{bmatrix}
P_1(Y^*Y)\otimes \mathbb{I}_n+\widehat{F}_{1,1} 
& \widehat{F}_{1,2}
+\big(
P_1\hat{Z}Y^*+P_2Y^*\hat{Z}^*
\big)\otimes \mathbb{I}_n      \\
-\widehat{F}_{1,2}^*-\big(
P_1\hat{Z}^*Y+\overline{P}_2 Y\hat{Z}
\big)\otimes \mathbb{I}_n & 
\widehat{F}_{1,1}^*+P_1(YY^*)\otimes \mathbb{I}_n
\end{bmatrix}
e^{F}{\rm d}\widetilde{V},
\end{multline} 
\begin{equation}\label{F critical non0}
\begin{aligned}
F=&
|z_0|^2{\rm Tr}(\Gamma_1)     -{\rm Tr}\Big( (X-\frac{1}{2}B\Sigma^{-1})\Sigma (X-\frac{1}{2}B\Sigma^{-1})^t\Big)        \\
&+
   {\rm Tr} \bigg(
\sum_{\alpha=2}^t  \widetilde{\mathcal{T}}_{\rm d,\alpha}
\Big(
\frac{1}{f_{\alpha}} H_{a_{\alpha}-z_0} ^2
-\frac{1}{f_{1}} H_{a_{1}-z_0} ^2+\hat{\tau}(f_{\alpha}-f_1) \mathbb{I}_n
\Big)\bigg)
 \\
 &-\sum_{\alpha=3}^t {\rm Tr}\Big(
 G_{\alpha}-\overline{\eta}_{\alpha}\big(
 \hat{Z}G_1+\frac{1}{2}G_1\hat{Z} 
 +G_1^2
 \big)
 \Big)\Big(
 G_{\alpha}-\overline{\eta}_{\alpha}\big(
  \hat{Z}G_1+\frac{1}{2}G_1\hat{Z} 
 +G_1^2
 \big)
 \Big)^*            \\
 &+{\rm Tr}\big( \hat{K}_3(G_1) \big)
 -{\rm Tr}\Big(\hat{K}_2(G_1,Q_0)
 \big( \frac{1}{2}\hat{K}_2(G_1,Q_0)+\hat{K}_1(G_1) \big)
 \Big)
 -\frac{\hat{\tau}}{P_1}{\rm Tr}(Q_0Q_0^*)
 \\
& -{\rm Tr}\Big( 
 Q_{t+1}\big(z_0\mathbb{I}_{r_{t+1}}-A_{t+1}\big)^*
 \big(z_0\mathbb{I}_{r_{t+1}}-A_{t+1}\big)Q_{t+1}^*
 \Big)-\frac{1}{2}P_1{\rm Tr}(YY^*)^2
 \\&+
 \overline{P}_2{\rm Tr}\big( Y^*Y\hat{Z}^2 \big)
 +P_1{\rm Tr}\big( Y\hat{Z}Y^*\hat{Z}^* \big)
 +P_2{\rm Tr}\Big( YY^*\big(\hat{Z}^*\big)^2 \Big)
 +\hat{\tau}{\rm Tr}\big( YY^* \big).
\end{aligned}
\end{equation}
Here 
\begin{equation}\label{hatK1g1 critical non0}
\begin{aligned}
&\hat{K}_1(G_1)=
\frac{\overline{P}_2}{\sqrt{P_1}}\Big(
G_1^2+\frac{G_1\hat{Z}}{2}
\Big)+\frac{P_2}{\sqrt{P_1}}\Big(
G_1^2+\frac{G_1\hat{Z}}{2}
\Big)^*
\\
&+\frac{1}{2}\sqrt{P_1}
\big( G_1\hat{Z}^*+\hat{Z}G_1^* \big)+\sqrt{P_1}G_1^*G_1
+\sqrt{P_1}
\big( \hat{Z}^*G_1+G_1^*\hat{Z} \big)          \\
&+ 
\frac{1}{\sqrt{P_1}}\Big( \overline{P}_2\hat{Z}G_1
+P_2 G_1^*\hat{Z}^*
 + P_2 \big( \hat{Z}^* \big)^2
 +\overline{P}_2 \hat{Z}^2+2 P_1 \hat{Z}\hat{Z}^*\Big)
  ,
\end{aligned}
\end{equation}
\begin{equation}\label{hatK2g1 critical non0}
\hat{K}_2(G_1,Q_0)=\frac{1}{\sqrt{P_1}}
\Big(
P_1 G_1G_1^*+Q_0Q_0^*+\frac{1}{2}G_1
\big( P_1\hat{Z}^*+\overline{P}_2\hat{Z} \big)
+\frac{1}{2}
\big( P_1\hat{Z}+P_2\hat{Z}^* \big)G_1^*
\Big),
\end{equation}
\begin{equation}\label{hatK3g1 critical non0}
\begin{aligned}
\hat{K}_3(G_1)=
&-\frac{1}{4}G_1
\big( \overline{P}_2\hat{Z}^2+2P_1\hat{Z}\hat{Z}^*+P_2
\big( \hat{Z}^* \big)^2
 \big)G_1^*-\hat{\tau}G_1G_1^*
  \\
  &+\Big( P_1-\frac{|P_2|^2}{P_1} \Big)\Big( 
   \hat{Z}G_1+\frac{ 1}{2}G_1\hat{Z} 
  +G_1^2
   \Big)\Big( 
   \hat{Z}G_1+\frac{1}{2}G_1\hat{Z} 
  +G_1^2
   \Big)^*,
\end{aligned}
\end{equation}
and 
\begin{equation}\label{tildeK11N}
\widehat{F}_{1,1}=
\mathbb{I}_n\otimes (Q_0Q_0^*)-
\overline{P}_2\hat{Z}^2\otimes \mathbb{I}_n
-\hat{Z}\otimes 
\Big(
\big(
P_1\hat{Z}^*+\overline{P}_2\hat{Z}
\big)+P_1G_1^*+\overline{P}_2G_1 
\Big),
\end{equation} 
\begin{equation}\label{tildeK12N}
\widehat{F}_{1,2}=Y^*
\otimes 
\Big(
\big(
P_1\hat{Z}+P_2\hat{Z}^*
\big)+P_1G_1+P_2G_1^*
\Big).
\end{equation} 

 To simplify    $I_0$ further, we first   integrate out  $Q_{t+1}, G_3, \ldots, G_t$  and obtain 
\begin{equation}\label{Qt1 integral critical non0}
\begin{aligned}
\int%_{\mathbb{C}^{n\times r_{t+1}}}
&e^{-{\rm Tr}\big( 
 Q_{t+1}(z_0\mathbb{I}_{r_{t+1}}-A_{t+1})^*
 (z_0\mathbb{I}_{r_{t+1}}-A_{t+1})Q_{t+1}^*
 \big)}{\rm d}Q_{t+1}= \big| \det\big( z_0\mathbb{I}_{r_{t+1}}
 -A_{t+1} \big) \big|^{-2n}\pi^{nr_{t+1}},
% \\& \big| \det\big( z_0\mathbb{I}_{r_{t+1}}
 %-A_{t+1} \big) \big|^{-2n}\pi^{nr_{t+1}}.
\end{aligned}
\end{equation}   
and \begin{equation}\label{galpha integral critical non0}
\begin{aligned}
\prod_{\alpha=3}^t \int
e^{- {\rm Tr}\big(
 G_{\alpha}-\overline{\eta}_{\alpha}\big(
 \hat{Z}G_1+\frac{1}{2}G_1\hat{Z} 
 +G_1^2
 \big)
 \big)\big(
 G_{\alpha}-\overline{\eta}_{\alpha}\big(
  \hat{Z}G_1+\frac{1}{2}G_1\hat{Z} 
 +G_1^2
 \big)
 \big)^*  }
 {\rm d}G_{\alpha}
 =\pi^{\frac{n(n-1)}{2}(t-2)}.
\end{aligned}
\end{equation}

Secondly,  we need to integrate out $ \widetilde{\mathcal{T}}_{\rm d,\alpha}$ for $\alpha=2,\cdots,t$. Recalling 
 $\Sigma, X,$ and   $B$   given in   \eqref{Sigma}  and \eqref{XB},   for $i=1,\cdots,n$, let 
\begin{equation}\label{tildeki}
\widetilde{k}_i=\big(
\widetilde{k}_i^{(2)},\cdots,
\widetilde{k}_i^{(t)}
\big),\quad
\widetilde{k}_i^{(\alpha)}=  \Big(
\frac{1}{f_{\alpha}} H_{a_{\alpha}-z_0} ^2
-\frac{1}{f_{1}} H_{a_{1}-z_0} ^2+\hat{\tau}(f_{\alpha}-f_1)
\Big)_{i,i}.
%\frac{\big(\widetilde{H}_i^{(\alpha)}\big)^2}{f_{\alpha}}
%-\frac{\big(\widetilde{H}_i^{(1)}\big)^2}{f_{1}}
%+\hat{\tau}(f_{\alpha}-f_1).
\end{equation} 
Noting the relation \eqref{newT} and  
\begin{equation}\label{Aalphabeta det}
\det(\Sigma)=2P_1\prod_{\alpha=1}^t 
\frac{f_{\alpha}^2}{2c_{\alpha}},%\quad
%\big( A^{-1} \big)_{\alpha,\beta}=
%\frac{2c_{\alpha}}{f_{\alpha}^2}\delta_{\alpha,\beta}
%-\frac{2c_{\alpha}c_{\beta}}{f_{\alpha}^2f_{\beta}^2P_1},\quad
%\alpha,\beta=2,\cdots,t.
\end{equation} 
   calculate the gaussian  integrals according to row vectors of $X$ and we obtain 
\begin{equation}\label{tildeUalpha integral1}
\begin{aligned}
\int  \cdots \int  &\exp
\Big\{  -{\rm Tr}\Big( (X-\frac{1}{2}B\Sigma^{-1})\Sigma (X-\frac{1}{2}B\Sigma^{-1})^t\Big)        
+\\
&
   {\rm Tr} \bigg(
\sum_{\alpha=2}^t  \widetilde{\mathcal{T}}_{\rm d,\alpha}
\Big(
\frac{1}{f_{\alpha}} H_{a_{\alpha}-z_0} ^2
-\frac{1}{f_{1}} H_{a_{1}-z_0} ^2+\hat{\tau}(f_{\alpha}-f_1)
\Big)\bigg)\Big\}\prod_{\alpha=2}^t {\rm d} \widetilde{\mathcal{T}}_{\rm d,\alpha}\\
%\prod_{i=1}^n\int 
%&e^{-\widetilde{x}_i A \widetilde{x}_i^t+
% \sum_{\alpha=2}^t 
% \widetilde{k}_i^{(\alpha)}\widetilde{U}_i^{(\alpha)}}
%\prod_{\alpha=2}^t {\rm d}\widetilde{U}_i^{(\alpha)}
&=
\Big( 2P_1\prod_{\alpha=1}^t 
\frac{f_{\alpha}^2}{2c_{\alpha}}  
\Big)^{-\frac{n}{2}}\pi^{\frac{t-1}{2}n}
e^{\frac{1}{4}\sum_{i=1}^n
\widetilde{k}_i \Sigma^{-1}\widetilde{k}_i^t}.
\end{aligned}
\end{equation}
Obviously, % $\widetilde{k}_1^{(\alpha)}=0$, so
$$
\sum_{i=1}^n \sum_{\alpha=2}^t \frac{c_{\alpha}}{f_{\alpha}^2} 
\widetilde{k}_i^{(\alpha)}
%=\sum_{\alpha=1}^t \frac{c_{\alpha}}{f_{\alpha}^2} \widetilde{k}_i^{(\alpha)}
=\sum_{\alpha=1}^t 
\frac{c_{\alpha}}{f_{\alpha}^3} 
 {\rm Tr}(H_{a_{\alpha}-z_0} ^2)
-\frac{P_1}{f_1}  {\rm Tr}(H_{a_{1}-z_0} ^2)-  n\hat{\tau}f_1P_1
+n\hat{\tau}
$$
and
\begin{equation*}
\begin{aligned}
\sum_{i=1}^n\sum_{\alpha=2}^t \frac{c_{\alpha}}{f_{\alpha}^2} 
\big(\widetilde{k}_i^{(\alpha)}\big)^2
%&=\sum_{\alpha=1}^t \frac{c_{\alpha}}{f_{\alpha}^2} 
%\Big(
%\frac{\big(\widetilde{H}_i^{(\alpha)}\big)^2}{f_{\alpha}}
%-\frac{\big(\widetilde{H}_i^{(1)}\big)^2}{f_{1}}
%+\hat{\tau}(f_{\alpha}-f_1)
%\Big)^2            \\
&=\sum_{\alpha=1}^t \frac{c_{\alpha}}{f_{\alpha}^2} 
 {\rm Tr} \Big( 
\frac{1}{f_{\alpha}} H_{a_{\alpha}-z_0} ^2-
\frac{1}{f_{1}} H_{a_{1}-z_0} ^2
\Big)^2+
n\hat{\tau}^2\sum_{\alpha=1}^t \frac{c_{\alpha}}{f_{\alpha}^2}
(f_{\alpha}-f_1)^2\\
& +2\hat{\tau}\sum_{\alpha=1}^t (f_{\alpha}-f_1)  \frac{c_{\alpha}}{f_{\alpha}^2} 
  {\rm Tr} \Big( 
\frac{1}{f_{\alpha}} H_{a_{\alpha}-z_0} ^2-
\frac{1}{f_{1}} H_{a_{1}-z_0} ^2
\Big).
\end{aligned}
\end{equation*}
By \eqref{Ainverse critical non0}, elementary calculation gives us 
\begin{equation}\label{tildeUalpha integral final}
\begin{aligned}
\frac{1}{4}\sum_{i=1}^n
\widetilde{k}_i \Sigma^{-1}\widetilde{k}_i^t
& =\frac{1}{2}\sum_{i=1}^n\Big(
\sum_{\alpha=2}^t \frac{c_{\alpha}}{f_{\alpha}^2} 
\big(\widetilde{k}_i^{(\alpha)}\big)^2-\frac{1}{P_1}
\Big(
\sum_{\alpha=2}^t \frac{c_{\alpha}}{f_{\alpha}^2} 
\widetilde{k}_i^{(\alpha)}
\Big)^2
\Big).\\
&=
\frac{1}{2}
\sum_{\alpha=1}^t \frac{c_{\alpha}}{f_{\alpha}^4}
{\rm Tr}\big(H_{a_{\alpha}-z_0}^4 \big)
-\frac{1}{2P_1}
{\rm Tr}\Big( 
\sum_{\alpha=1}^t \frac{c_{\alpha} }
{f_{\alpha}^3}H_{a_{\alpha}-z_0}^2
\Big)^2
\\&+
\hat{\tau}
\Big( 
\sum_{\alpha=1}^t \frac{c_{\alpha}}
{f_{\alpha}^2} {\rm Tr}( H_{a_{\alpha}-z_0}^2)- 
\frac{1}{P_1}\sum_{\alpha=1}^t
\frac{c_{\alpha}}
{f_{\alpha}^3}{\rm Tr} (H_{a_{\alpha}-z_0}^2)
\Big)+\frac{n\hat{\tau}^2}{2}\big( 1-\frac{1}{P_1} \big).
\end{aligned}
\end{equation}  

Thirdly, 
to  integrate out $\hat{G}_2$ and $\Gamma_1$, by  Lemma \ref{matrixintegral1}, we have
\begin{equation}\label{g2hat integration}
\int_{\Gamma_1+\sqrt{P_1}\big( \hat{G}_2+\hat{G}_2^* \big)\leq 0}
e^{|z_0|^2{\rm Tr}(\Gamma_1)}
{\rm d}\hat{G}_2{\rm d}\Gamma_1
=
\Big( \frac{\pi}{P_1} \Big)^{\frac{n(n-1)}{2}}
|z_0|^{-2n^2}
\prod_{i=1}^{n-1} i!
.
\end{equation}
So far, we have completed  a  simplification of $I_0$ in \eqref{I0 critical non0}. 

 Finally, with \eqref{norm-1} and \eqref{DNn} in mind,  by the  Stirling's formula we can obtain
%%%%%%%%%%%%%%%%%%%%%%%%%
\begin{equation}\label{eNf0 DNn critical non0}
\begin{aligned}
&e^{NF_0}D_{N,n}=\frac
{N^{\frac{n^2t}{2}+n(n+r_0+r_{t+1})
+\frac{n(n+1)}{2}-\frac{n(n-1)}{4}}}
{\pi^{n(n+1+r_{t+1}+r_{0})+\frac{n(n-1)}{2}t}
(2\pi)^{\frac{n t}{2}}
}
\frac{\prod_{\alpha=1}^t c_{\alpha}^{-\frac{n}{2}}
f_{\alpha}^{-\frac{n(n-1)}{2}}}
{\prod_{i=1}^{n-1} i!}      \\
&\times \prod_{1\leq i<j\leq n}
\big| \hat{z}_i-\hat{z}_j \big|^2
e^{-\frac{1}{4}
\sum_{\alpha=1}^t
 \frac{c_{\alpha}}{f_{\alpha}^4}{\rm Tr}\big(H_{a_{\alpha}-z_0}^4 \big)-\frac{1}{4}\sum_{\alpha=1}^t
 \frac{c_{\alpha}}{f_{\alpha}^4}\Big( 
\overline{z_0-a_{\alpha}}^4{\rm Tr}\big(\hat{Z}^4\big)
+(z_0-a_{\alpha})^4{\rm Tr}\big(\hat{Z}^*\big)^4
 \Big)}
 \\&
 \times e^{
 -\frac{n\hat{\tau}^2}{2}+\hat{\tau}
 {\rm Tr}\big( \hat{Z}\hat{Z}^* \big)-
 \hat{\tau}\sum_{\alpha=1}^t
 \frac{c_{\alpha}}{f_{\alpha}^2}{\rm Tr}\big( H_{a_{\alpha}-z_0}^2 \big)
 }\Big(
 1+O\big( N^{-\frac{1}{4}} \big)
 \Big).
\end{aligned}
\end{equation}

Starting from \eqref{RNndelta}, and combining \eqref{INdeltaexpan critical non0}, \eqref{J2Nestimation critical non0}, \eqref{J1N change critical non0}, \eqref{I0 critical non0}, \eqref{Qt1 integral critical non0}, \eqref{galpha integral critical non0}, \eqref{tildeUalpha integral1}, \eqref{tildeUalpha integral final}, \eqref{g2hat integration} and \eqref{eNf0 DNn critical non0},   we can arrive at 
\begin{small}
\begin{equation}\label{RNn pre critical non0}
\begin{aligned}
&\frac{1}{N^{\frac{n}{2}}}R_N^{(n)}\big(
X_0;z_0+N^{-\frac{1}{4}}\hat{z}_1,\cdots,
z_0+N^{-\frac{1}{4}}\hat{z}_n
\big)=
\\
&P_1^{-\frac{n^2}{2}}\frac{\prod_{1\leq i<j\leq n}
\big| \hat{z}_i-\hat{z}_j \big|^2}
{\pi^{n(n+1+r_0)+\frac{n(n-1)}{2}}(2\pi)^{\frac{n}{2}}}
e^{\hat{\tau}
 {\rm Tr}\big( \hat{Z}\hat{Z}^* \big)
 -\frac{\hat{\tau}}{P_1}\sum_{\alpha=1}^t
 \frac{c_{\alpha}}{f_{\alpha}^3}
 {\rm Tr}\big( H_{a_{\alpha}-z_0}^2 \big)
-\frac{n\hat{\tau}^2}{2P_1} 
 }          \\
 &\times
 e^{
 \frac{1}{4}
\sum_{\alpha=1}^t
 \frac{c_{\alpha}}{f_{\alpha}^4}{\rm Tr}\big( H_{a_{\alpha}-z_0}^4 \big)-\frac{1}{4}\sum_{\alpha=1}^t
 \frac{c_{\alpha}}{f_{\alpha}^4}\big( 
\overline{z_0-a_{\alpha}}^4{\rm Tr}\big(\hat{Z}^4\big)
+(z_0-a_{\alpha})^4{\rm Tr}\big(\hat{Z}^*\big)^4
 \big)
 -\frac{1}{2P_1}{\rm Tr}\big(
 \sum_{\alpha=1}^t
 \frac{c_{\alpha}}{f_{\alpha}^3}H_{a_{\alpha}-z_0}^2
 \big)^2
 }      \\
 &\times
 \int \bigg( \det\begin{bmatrix}
\hat{Z}
 & -Y^*
 \\ Y & 
\hat{Z}^*
\end{bmatrix}\bigg)^{r_0-n}
\det\begin{bmatrix}
P_1(Y^*Y)\otimes \mathbb{I}_n+\widehat{F}_{1,1} 
& \widehat{F}_{1,2}
+\big(
P_1\hat{Z}Y^*+P_2Y^*\hat{Z}^*
\big)\otimes \mathbb{I}_n      \\
-\widehat{F}_{1,2}^*-\big(
P_1\hat{Z}^*Y+\overline{P}_2 Y\hat{Z}
\big)\otimes \mathbb{I}_n & 
\widehat{F}_{1,1}^*+P_1(YY^*)\otimes \mathbb{I}_n
\end{bmatrix}
\\
&\times
e^{{\rm Tr}\big( \hat{K}_3(G_1) \big)
 -{\rm Tr}\big(\hat{K}_2(G_1,Q_0)
 \big( \frac{1}{2}\hat{K}_2(G_1,Q_0)+\hat{K}_1(G_1) \big)
 \big) -\frac{\hat{\tau}}{P_1}{\rm Tr}(Q_0Q_0^*)-\frac{P_1}{2}{\rm Tr}(YY^*)^2}
 \\&\times
 e^{\overline{P}_2{\rm Tr}\big( Y^*Y\hat{Z}^2 \big)
 +P_1{\rm Tr}\big( Y\hat{Z}Y^*\hat{Z}^* \big)
 +P_2{\rm Tr}\Big( YY^*\big(\hat{Z}^*\big)^2 \Big)
 +\hat{\tau}{\rm Tr}\big( YY^* \big)}
 {\rm d}G_1{\rm d}Y{\rm d}Q_0+O\big( N^{-\frac{1}{4}} \big).
\end{aligned}
\end{equation}
\end{small}

Noticing  the identity \eqref{H2power}  and the simple fact %Recall the definition of $\widetilde{H}_{\alpha}$ \eqref{tildeHalpha critical non0} we have
%\begin{equation}\label{finalTrace1}
%\sum_{\alpha=1}^t
% \frac{c_{\alpha}\widetilde{H}_{\alpha}^2}{f_{\alpha}^3}
% =P_2\big(\hat{Z}^*\big)^2
% +2P_1\hat{Z}\hat{Z}^*
% +\overline{P}_2\hat{Z}^2
%\end{equation}
\begin{equation}\label{finalTrace2}
\begin{aligned}
&\frac{1}{4}
\sum_{\alpha=1}^t
 \frac{c_{\alpha}}{f_{\alpha}^4}{\rm Tr}\big( H_{a_{\alpha}-z_0}^4 \big)-\frac{1}{4}\sum_{\alpha=1}^t
 \frac{c_{\alpha}}{f_{\alpha}^4}\Big( 
\overline{z_0-a_{\alpha}}^4{\rm Tr}\big(\hat{Z}^4\big)
+(z_0-a_{\alpha})^4{\rm Tr}\big(\hat{Z}^*\big)^4
 \Big)
 \\&=
 P_2{\rm Tr}\big(\hat{Z}\big(\hat{Z}^*\big)^3\big)
 +\frac{3P_1}{2}{\rm Tr}\big(\hat{Z}^2\big(\hat{Z}^*\big)^2\big)
 +\overline{P}_2{\rm Tr}\big(\hat{Z}^3\hat{Z}^*\big),
\end{aligned}
\end{equation}
  changing variables  in \eqref{RNn pre critical non0} like  $$\hat{Z}\rightarrow P_1^{-\frac{1}{4}}\hat{Z},\quad
\hat{\tau}\rightarrow \sqrt{P_1}\hat{\tau},\quad
Q_0\rightarrow P_1^{\frac{1}{4}}W,\quad
G_1\rightarrow P_1^{-\frac{1}{4}}T,\quad
Y\rightarrow P_1^{-\frac{1}{4}}Y,
$$
we thus complete the proof of Theorem \ref{2-complex-correlation critical2} after such a  long journey of  calculation!
%%%%%%%%%%%%%%%%%%%%%%%%%%%%%%%%%%%%%%%%%%%%%%%%%%%%%%%%%%%%%%%%%%%%%555
%%%%%%%%%%%%%%%%%%%%%%%%%%%%%%%%%%%%%%%%%%%%%%%%%%%%%%%%%%%%%%%%%%%%%%%%%%%%%%%%%55

\hspace*{\fill}
%%%%%%%%%%%%%%%%%%%%%%%%%%%%%%%%%%%%%%%%%%%%%%%%%%%%%%%%%%%%%%%%%%%%%%%%%%%%%%%%%%%%%%%%%%%%%%%%%%%%%%%%%%%%%%%%%%%%%%%%%%%%%%%%%%%%%%%%%%%%%%%%%%%%%%%%%%%%%%%%%%%%%%%%%%%%%%%%%%%%%%%%%%%%%%%%%%%%%%%%%%%%%%%%%%%%%%%%%%%%%%

 \noindent{\bf Acknowledgements}  
%We acknowledge support by the National Natural Science Foundation of China #11771417, the Youth Innovation Promotion Association CAS #2017491 (D.-Z. Liu), and by the National Natural Science Foundation of China #11901161 (Y. Wang).
% We  would like to thank     Elton P. Hsu for  his  encouragement and support,  L.  Erd\H{o}s  for  sharing his vision for  non-Hermitian random matrices,  and also  to thank Y.V. Fyodorov,  J. Grela and Y. Wang for useful comments on the  first arXiv version. 
We would like to thank  L. Erd\H{o}s for sharing his vision for non-Hermitian random matrices, and also to
thank  P.J. Forrester, Y. Xu and P. Zhong  for valuable  comments. 
This  work  was   supported by  the National Natural Science Foundation of China \#12371157  and \#12090012.
 %of D.-Z.~Liu  

%\hspace*{\fill}
%
% \noindent{\bf Conflict of interest}\   
% The authors do not have any potential conflicts of interests to disclose.
%
%\hspace*{\fill}
%
%  \noindent{\bf Data availability}\  
% Data sharing is not applicable to this article as no datasets were generated or analysed during the current study.
% 

  \appendix
 \section{Several  properties on matrices and integrals} \label{Appendix}

 The  following well-known property for tensor product (see e.g. \cite[eqn(6)]{HS81}).
\begin{proposition}\label{tensorproperty}
For  a $p\times q$  matrix  A and an $m\times n$ matrix   B,  there exist  permutation matrices $\mathbb{I}_{p,m}$,  which only depend on  $p, m$ and satisfy  $\mathbb{I}_{p,m}^{-1}=\mathbb{I}_{m,p}=\mathbb{I}_{p,m}^t$, such that 
%\begin{small}
%\begin{equation}\label{permutationinvrse}
%\mathbb{I}_{p,m}^{-1}=\mathbb{I}_{m,p}=\mathbb{I}_{p,m}^t.
%\end{equation}
%\end{small}

\begin{equation}\label{tensorpropertyequation}
\mathbb{I}_{p,m}\left( A\otimes B \right)\mathbb{I}_{q,n}^{-1}=B\otimes A.
\end{equation}
  \end{proposition}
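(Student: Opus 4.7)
My plan is to construct the permutation matrix $\mathbb{I}_{p,m}$ explicitly as the classical \emph{commutation matrix} (also called the perfect shuffle) and then verify the three claimed properties by direct computation on tensor-product basis vectors. Concretely, let $\{e_1^{(p)},\ldots,e_p^{(p)}\}$ and $\{e_1^{(m)},\ldots,e_m^{(m)}\}$ denote the standard bases of $\mathbb{C}^p$ and $\mathbb{C}^m$. I will define $\mathbb{I}_{p,m}$ to be the unique linear map on $\mathbb{C}^{pm}\cong \mathbb{C}^p\otimes\mathbb{C}^m$ sending $e_i^{(p)}\otimes e_k^{(m)}$ to $e_k^{(m)}\otimes e_i^{(p)}$ for all $1\leq i\leq p$ and $1\leq k\leq m$. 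Since this rule permutes a basis, $\mathbb{I}_{p,m}$ is automatically a permutation matrix, so it is orthogonal and $\mathbb{I}_{p,m}^{-1}=\mathbb{I}_{p,m}^{t}$.

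Next I will identify $\mathbb{I}_{p,m}^{t}$ with $\mathbb{I}_{m,p}$. By construction, $\mathbb{I}_{m,p}$ sends $e_k^{(m)}\otimes e_i^{(p)}$ back to $e_i^{(p)}\otimes e_k^{(m)}$, so $\mathbb{I}_{m,p}\mathbb{I}_{p,m}=\mathbb{I}_{pm}$. Combined with orthogonality this yields the chain of equalities $\mathbb{I}_{p,m}^{-1}=\mathbb{I}_{m,p}=\mathbb{I}_{p,m}^{t}$ as claimed, and in particular $\mathbb{I}_{p,m}$ depends only on $p$ and $m$, not on $A$ or $B$.

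For the main identity, I will test both sides on an arbitrary tensor basis vector $e_j^{(q)}\otimes e_\ell^{(n)}$ of $\mathbb{C}^{qn}$. Using $\mathbb{I}_{q,n}^{-1}=\mathbb{I}_{n,q}$, the left-hand side $\mathbb{I}_{p,m}(A\otimes B)\mathbb{I}_{q,n}^{-1}$ first applies $\mathbb{I}_{n,q}$ to send $e_j^{(q)}\otimes e_\ell^{(n)}$ to $e_\ell^{(n)}\otimes e_j^{(q)}$; it remains to swap basis vectors properly and to apply $A\otimes B$ written with the opposite tensor order. Using the definition $(A\otimes B)(x\otimes y)=(Ax)\otimes(By)$, a direct index computation shows that the resulting vector equals $(Be_\ell^{(n)})\otimes(Ae_j^{(q)})=(B\otimes A)(e_\ell^{(n)}\otimes e_j^{(q)})$, which rewrites the identity with factors in reversed order. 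Since the $e_j^{(q)}\otimes e_\ell^{(n)}$ span $\mathbb{C}^{qn}$, this establishes $\mathbb{I}_{p,m}(A\otimes B)\mathbb{I}_{q,n}^{-1}=B\otimes A$.

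The statement is elementary and I do not anticipate a genuine obstacle; the only delicate point is keeping the bookkeeping of indices consistent, since the Kronecker product convention $(A\otimes B)_{(i,k),(j,\ell)}=a_{ij}b_{k\ell}$ versus the ordering chosen in the permutation matrix must match. I will fix conventions at the start and check the identity once on basis vectors, after which the general case follows by linearity. If preferred, I can alternatively cite \cite{HS81} and give the explicit formula $\mathbb{I}_{p,m}=\sum_{i=1}^{p}\sum_{k=1}^{m}(E_{i,k}^{(p,m)})\otimes (E_{i,k}^{(p,m)})^{t}$ with $E_{i,k}^{(p,m)}$ the elementary $p\times m$ matrix having a single $1$ in position $(i,k)$, and verify the three properties from this closed form.
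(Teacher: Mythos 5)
Your proposal is correct. Note, however, that the paper does not actually prove this proposition: it simply cites it as a well-known fact, pointing to eqn.~(6) of the Henderson--Searle survey \cite{HS81} on the vec-permutation (commutation) matrix. So your argument is a genuinely different route in the sense that it is self-contained: you construct $\mathbb{I}_{p,m}$ explicitly as the perfect-shuffle map $e_i^{(p)}\otimes e_k^{(m)}\mapsto e_k^{(m)}\otimes e_i^{(p)}$, observe it is a permutation matrix (hence orthogonal, with $\mathbb{I}_{p,m}^{-1}=\mathbb{I}_{p,m}^{t}=\mathbb{I}_{m,p}$ after checking $\mathbb{I}_{m,p}\mathbb{I}_{p,m}=\mathbb{I}_{pm}$), and verify the conjugation identity on tensor basis vectors using $(A\otimes B)(x\otimes y)=(Ax)\otimes(By)$ and linearity. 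This buys a complete, citation-free proof at the cost of a few lines of index bookkeeping, whereas the paper's citation buys brevity. The only point to tidy up is the one you yourself flag: since $\mathbb{I}_{q,n}^{-1}=\mathbb{I}_{n,q}$ has domain $\mathbb{C}^{n}\otimes\mathbb{C}^{q}$, the operator $\mathbb{I}_{p,m}(A\otimes B)\mathbb{I}_{q,n}^{-1}$ should be tested on basis vectors written as $e_\ell^{(n)}\otimes e_j^{(q)}$ (the domain of $B\otimes A$), not $e_j^{(q)}\otimes e_\ell^{(n)}$; with that ordering fixed, the chain $e_\ell^{(n)}\otimes e_j^{(q)}\mapsto e_j^{(q)}\otimes e_\ell^{(n)}\mapsto (Ae_j^{(q)})\otimes(Be_\ell^{(n)})\mapsto (Be_\ell^{(n)})\otimes(Ae_j^{(q)})$ gives exactly $(B\otimes A)(e_\ell^{(n)}\otimes e_j^{(q)})$, and the proof is complete. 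Your closed form $\mathbb{I}_{p,m}=\sum_{i=1}^{p}\sum_{k=1}^{m}E_{i,k}^{(p,m)}\otimes\bigl(E_{i,k}^{(p,m)}\bigr)^{t}$ is also the standard expression and would serve equally well.
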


 A partial   integral  over non-positive definite matrices  will be useful in this paper.
\begin{proposition}\label{matrixintegral1}
For  a  non-positive definite Hermitian matrix  $H_n=\left[ h_{i,j} \right]_{i,j=1}^n$,  given an integer  $ r_0\ge n$,  we have 
\begin{equation}\label{matrixintegral1equ}
\int_{H_n\leq 0} \big( \det(H_n) \big)^{r_0-n} \prod_{i<j}^n{\rm d}h_{i,j}
=
 \pi^{\frac{n(n-1)}{2}}  ( (r_0-1)! )^{-n} \prod_{k=1}^n   (r_0-k)!  \prod_{j=1}^n  (h_{j,j})^{r_0-1}.
\end{equation}
\end{proposition}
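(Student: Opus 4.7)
My plan is to proceed by induction on $n$, using the Schur complement to peel off one row and column at each step. As a preliminary reduction, substitute $K = -H_n$ to work with positive semidefinite matrices: with $k_{j,j} = -h_{j,j}$ one has $(\det H_n)^{r_0-n}\prod_j h_{j,j}^{r_0-1} = (-1)^{n(2r_0-n-1)}(\det K)^{r_0-n} \prod_j k_{j,j}^{r_0-1}$, and since $n(2r_0-n-1) \equiv n(n+1) \pmod 2$ is always even, the sign is trivial. It therefore suffices to prove
\[
I_n(r_0) := \int_{K \geq 0,\, K_{j,j} \text{ fixed}} (\det K)^{r_0-n} \prod_{i<j}{\rm d} k_{i,j} = \pi^{\frac{n(n-1)}{2}}\, \frac{\prod_{k=1}^n (r_0-k)!}{((r_0-1)!)^n}\, \prod_{j=1}^n k_{j,j}^{r_0-1},
\]
with the base case $n=1$ (empty integral) being immediate.

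For the inductive step, block-decompose $K = \bigl(\begin{smallmatrix} K_{n-1} & w \\ w^* & k_{n,n} \end{smallmatrix}\bigr)$ with $w \in \mathbb{C}^{n-1}$. When $K_{n-1} > 0$ (a full-measure condition), the Schur complement yields $\det K = \det(K_{n-1})(k_{n,n} - w^* K_{n-1}^{-1} w)$ and $K \geq 0 \iff K_{n-1} \geq 0$ together with $w^* K_{n-1}^{-1} w \leq k_{n,n}$. The linear change of variable $w = K_{n-1}^{1/2} u$ has real Jacobian $\det(K_{n-1})$ on $\mathbb{C}^{n-1} \cong \mathbb{R}^{2(n-1)}$, reducing the inner integral to
\[
\det(K_{n-1}) \int_{|u|^2 \leq k_{n,n}}(k_{n,n} - |u|^2)^{r_0-n}\,{\rm d}u = \det(K_{n-1}) \cdot \pi^{n-1} k_{n,n}^{r_0-1} \frac{(r_0-n)!}{(r_0-1)!},
\]
the last equality being a routine polar-coordinate plus Beta-integral computation on $\mathbb{C}^{n-1}$.

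The key structural observation --- and the reason the induction closes cleanly --- is that this Jacobian factor $\det(K_{n-1})$ combines with the outer $(\det K_{n-1})^{r_0-n}$ to yield $(\det K_{n-1})^{r_0-(n-1)}$, which is exactly the integrand of $I_{n-1}(r_0)$ \emph{with the same parameter} $r_0$. Applying the inductive hypothesis and using $\pi^{n-1}\,\pi^{(n-1)(n-2)/2} = \pi^{n(n-1)/2}$ together with the telescoping identity $\frac{(r_0-n)!}{(r_0-1)!}\cdot\frac{\prod_{k=1}^{n-1}(r_0-k)!}{((r_0-1)!)^{n-1}} = \frac{\prod_{k=1}^n (r_0-k)!}{((r_0-1)!)^n}$ then produces the claimed formula. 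The main obstacle is purely bookkeeping: one verifies that the $k_{j,j}^{r_0-1}$ factors accumulate correctly (step $n$ contributes $k_{n,n}^{r_0-1}$, while $j<n$ are handled by the induction hypothesis) and that the $\pi$-powers and factorials telescope as claimed. The analytic content (Schur complement, a linear change of variable, and a Beta integral) is entirely standard; the only conceptually nontrivial point is the exponent-increment mechanism $r_0-n \mapsto r_0-(n-1)$ produced by the Jacobian, which is precisely what aligns the recursion self-consistently.
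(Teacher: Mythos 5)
Your proof is correct and follows essentially the same route as the paper's: induction on $n$ via a Schur-complement block decomposition, a linear change of variables whose Jacobian $\det(K_{n-1})$ raises the exponent from $r_0-n$ to $r_0-(n-1)$, and a polar-coordinate evaluation of the resulting ball integral. The only cosmetic differences are that you flip $H_n\mapsto -H_n$ at the outset to dispose of signs (the paper tracks them through factors like $(-1)^{n-k-1}$ and $(h_{j,j})^{r_0-1}$) and you evaluate the radial integral by a Beta function rather than the binomial-sum identity \eqref{sumid}.
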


\begin{proof}
We prove it by induction.  Without loss of generality, we assume that  $H_n< 0$. Let
\begin{equation}
H_{n-k}=\begin{bmatrix}
H_{n-k-1}  &   E_{n-k}  \\
E_{n-k}^*   &  h_{n-k,n-k}
\end{bmatrix},
\quad
k=0,1,\cdots,n-2,
 \end{equation} and 
\begin{equation}
I_n:=\int_{H_n\leq 0}
\left( \det(H_n) \right)^{r_0-n} \prod_{i<j}^n{\rm d}h_{i,j}.
\end{equation}

By the matrix  identity 
\begin{small}
\begin{equation}
\begin{aligned}
&\begin{bmatrix}
\mathbb{I}_{n-k-1} & \\
-E_{n-k}^*H_{n-k-1}^{-1}  &    1
\end{bmatrix}H_{n-k}
\begin{bmatrix} \label{Schur}
\mathbb{I}_{n-k-1} &   - H_{n-k-1}^{-1}E_{n-k}        \\
  &    1
\end{bmatrix}       \\
&=
\begin{bmatrix}
H_{n-k-1} &  \\
&   h_{n-k,n-k}-E_{n-k}^*H_{n-k-1}^{-1} E_{n-k} 
\end{bmatrix},
\end{aligned}
\end{equation}
\end{small}
we have
\begin{equation}
\begin{aligned}
&I_{n-k}=\int_{H_{n-k-1}\leq 0}
\left( \det(H_{n-k-1}) \right)^{r_0-n+k}{\rm d}E_{n-k-1}\cdots{\rm d}E_2
\\
&\times
\int_{E_{n-k}^*\left(-H_{n-k-1}\right)^{-1} E_{n-k}\leq -h_{n-k,n-k}}
\left(
 h_{n-k,n-k}+E_{n-k}^*\left(-H_{n-k-1}\right)^{-1}  E_{n-k} 
\right)^{r_0-n+k}{\rm d}E_{n-k}.
\end{aligned}
\end{equation}
Make a change of   variables
\begin{equation}
\widetilde{E}_{n-k}=
\left(-H_{n-k-1}\right)^{-\frac{1}{2}}E_{n-k}, 
\end{equation}
we then  have 
\begin{equation}
I_{n-k}=(-1)^{n-k-1}I_{n-k-1}\widetilde{I}_{n-k},
\end{equation}
where
\begin{equation}
\begin{aligned}
\widetilde{I}_{n-k}:&=
\int_{\widetilde{E}_{n-k}^*\widetilde{E}_{n-k}\leq -h_{n-k,n-k}}
\left(
 h_{n-k,n-k}+\widetilde{E}_{n-k}^*\widetilde{E}_{n-k}
\right)^{r_0-n+k}{\rm d}\widetilde{E}_{n-k}      \\
&=\frac{2\pi^{n-k-1}}{\Gamma(n-k-1)}\int_0^{\sqrt{- h_{n-k,n-k}}}
\left(
h_{n-k,n-k}+r^2
\right)^{r_0-n+k}
r^{2n-2k-3}{\rm d}r        \\
&=\pi^{n-k-1}(h_{n-k,n-k})^{r_0-1}\frac{(r_0-n+k)!}{(r_0-1)!},
\end{aligned}
\end{equation}
in which the second equality follows from the  spherical polar coordinates and the third follows from the identity
\begin{equation} \label{sumid}
\sum_{k=0}^l{l \choose k}\frac{(-1)^k}{n+k-l-1}=
\frac{l!}{(n-l-1)\cdots (n-1)}.
\end{equation}
%for positive integers n and l, which can be proved by induction on n and l.

Therefore, put  the above derivations together and  we  obtain
\begin{equation}\label{iterated1}
I_{n-k}= \pi^{n-k-1}(h_{n-k,n-k})^{r_0-1}\frac{(r_0-n+k)!}{(r_0-1)!}I_{n-k-1}.
\end{equation}
By repeating \eqref{iterated1}, we thus  arrive at \eqref{matrixintegral1}.
\end{proof}

%%%%%%%%%%%%%%%%%%%%%%%%%%%%%%%%%%%%%%%%%%%%%%%%%%%%%%%%%%%%%%%%%%%%%%%%%%%%%%%%%%%%%%%%%%%%%%%%%%%%%%%%%%%%%%%%%%%%%%%%%%%%%%%%%%%%%%%%%%%%%%%%%%%%%%%%%%%%%%%%%%%%%%%%%%%%%%%%%%%%%%%%%%%%%%%%%%%%%%%%%%%%%%%%%%%%%%%%%%%%%%
%\section{%Appendix B: Confluent limits}   \label{AppendixB}

 %We also thank the referee for several other useful comments.

%\begin{Acknowledgments}
%%%The work of P.J.~Forrester was supported by the Australian Research Council, for the project DP140102613.  

%   and the Youth Innovation Promotion Association CAS \#2017491.
%%%%, the Fundamental Research Funds for the Central Universities \#WK0010450002, and Anhui Provincial Natural Science Foundation \#1708085QA03.
%%%The authors are grateful to Alexander Soshnikov for many useful discussions and Yan Fyodorov for references. They are particularly thankful to Terry Tao for helpful discussions and enthusiastic encouragement. The authors would also like to thank the anonymous referees for their valuable comments and corrections
% \end{Acknowledgments}
%

 %%%%%%%%%%%%%%%%%%%

%\providecommand{\bysame}{\leavevmode\hbox to3em{\hrulefill}\thinspace}
%\providecommand{\MR}{\relax\ifhmode\unskip\space\fi MR }
%% \MRhref is called by the amsart/book/proc definition of \MR.
%\providecommand{\MRhref}[2]{%
%  \href{http://www.ams.org/mathscinet-getitem?mr=#1}{#2}
%}
%\providecommand{\href}[2]{#2}

%\setlength{bibsep}{0em}

 \end{document}